\documentclass[11pt, a4paper]{amsart}
\usepackage[foot]{amsaddr}
\usepackage{amssymb,amsmath,enumitem, bm,  eucal,  exscale,  mathrsfs,  ulem}
\usepackage{graphicx}
\usepackage{xurl}
\usepackage[pagewise]{lineno}
\usepackage[hyperindex,breaklinks]{hyperref}
\usepackage[dvipsnames]{xcolor}
\hypersetup{colorlinks=true, citecolor=NavyBlue, linkcolor=OliveGreen, urlcolor=Maroon}
\usepackage{graphicx}
\usepackage{comment}

\setlength{\textwidth}{6.5truein} \setlength{\textheight}{9.5truein}
\setlength{\oddsidemargin}{-0.0in}
\setlength{\evensidemargin}{-0.0in}
\setlength{\topmargin}{-0.0truein}
\newtheorem{theorem}{Theorem}[section]
\newtheorem{lemma}{Lemma}[section]
\newtheorem{prop}{Proposition}[section]
\newtheorem{cor}{Corollary}[section]
\newtheorem{remark}{Remark}[section]
\newtheorem{defi}{Definition}[section]

\numberwithin{equation}{section}

\newcommand{\real}{{\mathbb R}}
\newcommand{\form}{\mathcal E}
\newcommand{\dom}{\mathcal F}

\newcommand{\vareps}{\varepsilon}
\newcommand{\dis}{\displaystyle}

\newcommand{\rd}{{\mathbb R}^d}

\makeatletter
\newcounter{con}

\makeatother

\allowdisplaybreaks[4] 
\title[Homogenization of diffusion processes]{Homogenization of diffusion processes with singular drifts and potentials via unfolding method}
\subjclass[2020]{Primary 31C25; Secondary 60J46, 35B27} 
\keywords{Homogenization, diffusion processes,  singular drifts,  unfolding method}
\author{Toshihiro Uemura}
\address[Toshihiro Uemura]{Department of Mathematics, Faculty of Engineering Science, 
Kansai University, Suita, Osaka 564-8680, Japan} \email{\href{mailto:t-uemura@kansai-u.ac.jp}{t-uemura@kansai-u.ac.jp} (T. Uemura)}

\author{Adisak Seesanea}
\address[Adisak Seesanea]{Sirindhorn International Institute of Technology, Thammasat University, Pathum Thani 12120, Thailand}
\email[A. Seesanea]{\href{adisak.see@siit.tu.ac.th}{adisak.see@siit.tu.ac.th} (A. Seesanea)}
\begin{document}
\begin{abstract}
This work is concerned with homogenization problems for elliptic equations of the type
\[
\begin{cases}
\mathfrak{L}_{\delta} u_{\delta} + \lambda u_{\delta} = f_{\delta}  \qquad \text{in} \;\; D, \\
\qquad \qquad \; u = 0 \qquad \,\, \,  \text{on} \;\; \partial D,
\end{cases}
\]
where $\delta > 0$,  $\lambda \in \mathbb{R}$,  $D$ is a bounded open set in $\mathbb{R}^{d}$,  
and $f_{\delta} \in H^{-1}(D)$.   The operator  
$
\mathfrak{L}_{\delta} u = -{\rm div} \left( A^\delta  \nabla u  +  
u\,C^\delta \right) + B^\delta \nabla u +k^\delta u
$
with uniformly bounded coefficients $A^\delta$,  where drifts $B^\delta$,  $C^\delta$, and potential $k^\delta$ are possibly unbounded. An application to the homogenization of the corresponding diffusion processes is also discussed. 
\end{abstract}
\maketitle
\section{Introduction}\label{sect:intro}
Classical homogenization problems involve the study of Dirichlet problems for second-order elliptic partial differential equations with periodic diffusion coefficients of the form
\begin{equation} \label{classicalPDE}
\begin{cases}
\mathcal{L}_{\delta} u_{\delta} = f_{\delta}  \qquad \text{in} \;\; D, \\
\quad \;\, u = 0 \qquad \,  \text{on} \;\; \partial D,
\end{cases}
\end{equation}
where $\delta> 0$ is a small parameter that stands for the heterogeneity (or the length of periodic structure) of a 
body $D$ which is assumed to be a bounded open subset in the Euclidean space $\rd$  where $d \geq 1$,  and
the source $f_{\delta}$ belongs to  $H^{-1}(D)$,  i.e.,  the dual space of the Sobolev space $H^1_0(D)$
(see Section \ref{sec:notation} for definition).  

The operator 
$\mathcal{L}_{\delta} u  =  -\text{div}(A^{\delta} \nabla u)$ where 
 $A^{\delta}(x) = A(\frac{x}{\delta}) = (a_{ij}(\frac{x}{\delta}) )$ is a bounded $d \times d$ matrix-valued measurable function defined on $\rd$ 
such that each $a_{ij}$ is $Y$-periodic  in the sense that for $m = 1, 2, \ldots,d$ and $\ell \in \mathbb{Z}$, 
$$
a_{ij}(x + \ell {\bm e}_m) = a_{ij}(x)  \qquad {\rm a.e.}  \;\;   x \in Y, 
$$
where $Y=(0,1)^d$ is the unit cube in $\rd$ and ${\bm e}_m$ is the $m$-th directional 
unit vector of $\rd$.   Therefore,  in this case,  each  $A^{\delta}$ represents the diffusion coefficient inside the $\delta$ periodic structure.

The main goal of such homogenization problems is to find possible limit(s) 
$u_0$ to the sequence of solutions $\big(u_{\delta}\big)_{\delta>0}$  and determining the problem(s) 
for which the limit $u_0$ is the solution.

Observe that various approaches have been proposed to investigate the problems such as asymptotic expansions, 
$G$-convergence,  $2$-scale convergence,  $H$-convergence,  and unfolding method.  We refer the reader to
\cite{Al16,  BLP11,  CD10, CDG18,  Hor97,  LNW02} for a comprehensive discussion on corresponding methods. 

In the present work, we employ the unfolding method and techniques 
of semi-Dirichlet forms to study the homogenization of the Dirichlet problem
\begin{equation} \label{eqn:hom}
\begin{cases}
\mathfrak{L}_{\delta} u_{\delta} + \lambda u_{\delta} = f_{\delta}  \qquad \text{in} \;\; D, \\
\qquad \qquad \; u = 0 \qquad \,  \text{on} \;\; \partial D,
\end{cases}
\end{equation}
where $\delta > 0$,  $\lambda \in \mathbb{R}$,  $D \subset \mathbb{R}^{d}$ is a bounded open set, 
and $f_{\delta} \in H^{-1}(D)$.   
Here,  the general second-order differential operator
\[
\mathfrak{L}_{\delta} u = -{\rm div} \left( A^\delta  \nabla u  +  
u\,C^\delta \right) + B^\delta \nabla u +k^\delta u, 
\]
where $A^\delta=(a_{ij}^\delta)$ is a $d \times d$ matrix 
with $a_{ij}^\delta \in L^{\infty}(D)$,  and there exist positive constants $ \alpha \leq \beta $ so that 
\[
\alpha |\xi|^{2} \leq A^{\delta} (x)\xi \cdot \xi \leq \beta |\xi|^{2}
\]
for almost every $x \in D$ and every $\xi \in \mathbb{R}^d$. 
The drifts 
 $B^\delta =(b_i^\delta)$ and $C^\delta =(c_i^\delta)$ are arbitrary $d$-dimensional 
  vector-valued measurable functions on $\rd$,  and the potential $k^\delta$ is a measurable function on $\rd$. 

The Dirichlet problem \eqref{eqn:hom} will be understood in the weak sense
\[
\form^\delta(u_\delta, v) +\lambda (u_\delta, v)_{L^{2}} = \langle f, v \rangle_{H^{-1}, H^1_0}, \qquad v \in H^1_0(D), 
\]
where the quadratic form $\form^\delta(u,v)$ is given by
\[
\form^\delta(u,v) =
 \int_{D} A^\delta \nabla u \cdot \nabla v \, dx 
  + \int_{D} (B^{\delta} \cdot \nabla u )  v \,  dx 
+ \int_{D}  u  (C^{\delta} \cdot \nabla v) \,  dx
+ \int_{D} k^{\delta} \, u \, v \, dx.
\]

Under some additional mild assumptions on $A^{\delta}$ (not necessary of the single translation form $A^{\delta}(x) = A\left(\frac{x}{\delta} \right)$) and integrability
assumptions (possibly unbounded) on  $B^\delta$,  $C^\delta$ and $k^\delta$
imposed in Section \ref{S:semi-D},  we show that, for each $\delta > 0$,  the quadratic form
induces a regular lower bounded semi-Dirichlet form $(\form^{\delta}, H_0^1(D))$ on $L^2(D)$ with some 
(uniform) index $\beta_0>0$.  Therefore, there exists a diffusion 
process on $D$ associated with $(\form^{\delta}, H_0^1(D))$ (see \cite{O13}).  
As a consequence of our study of the homogenization of \eqref{eqn:hom},  
we can deduce the convergence of the diffusion processes in 
the finite dimensional distribution sense as $\delta$ tends to $0$. 

To sketch the idea of our approach to the homogenization problem, 
we first establish the existence and uniqueness of a solution 
$u^{\delta}_{\lambda}\in H^{1}_{0}(D)$ to \eqref{eqn:hom} for $\lambda>\beta_0$,  
where $\beta_0$ is the index of the semi-Dirichlet form $(\form^{\delta}, H_0^1(D))$.
Subsequently, we show that, if $f_{\delta}$ belongs to $L^2(D)$,  $u_{\lambda}^{\delta}=G_{\lambda}^{\delta}f$,  
where $\{G_{\lambda}^{\delta}\}_{\lambda>\beta_0}$ is the $L^2$-resolvent of $(\form^{\delta}, H_0^1(D))$.
Moreover, if in addition $f_{\delta}$ is convergent in $L^{2}(D)$, we deduce that $\{G_{\lambda}^{\delta}\}$ converges strongly  to some $L^2$-resolvent 
$\{G_{\lambda}^0\}$ 
in $L^2(D)$ as $\delta$ tends to $0$,  for which the limiting resolvent $\{G_{\lambda}^0\}$  corresponds to the semi-Dirichlet form $(\form^0, H^1_0(D))$
defined for $u,  v \in H^1_0(D),$ by
\begin{align*}
\form^0(u, v) &=\int_D A^{\sf eff} \, \nabla u \cdot \nabla v \, dx + 
\int_{D} B^{\sf eff} \cdot \nabla u \, v \, dx  +\int_D u \,  C^{\sf eff} \cdot \nabla  v \, dx 
 +  \int_{D} u \, v \, k^{\sf eff} \, dx  
\end{align*}
with some  {\it effective tensor} $A^{\sf eff} =(a_{ij}^{\sf eff})$,  {\it effective drifts} 
$B^{\sf eff} =(b_j^{\sf eff})$, $C^{\sf eff} =(c_j^{\sf eff})$ and 
{\it effective potential} $k^{\sf eff}$,  see Section \ref{S:hom}. 

To the extent of the authors' knowledge, our result is new since the drift terms $B^\delta$  and  $C^\delta$ are allowed to be unbounded  (see Section \ref{S:semi-D}).  
We note that earlier Jikov,  Kozlov and Oleinik \cite[pp. 31]{JKO94} adopted the multiscale expansion technique to investigate similar homogenization problems in the case $B^\delta$ and $C^\delta$ are bounded and of the class $C^2$,  see also \cite[Section 1.13]{BLP11} and \cite{WT23, X16}. 
Recently, Inagaki  considered a similar homogenization problem
of second-order elliptic differential operators with singular drifts (without potentials)
by using the semi-Dirichlet forms, see \cite{I19}.

The remainder of this paper is organized as follows.  In Section \ref{sect:dirichlet},  
we provide preliminary background on semi-Dirichlet forms,   construct  a sequence of closed forms 
 singular drifts and potentials,  and explore its properties. 
The concept of unfolding operators is discussed in  Section \ref{sec:unfold}. 
The statements, proofs, and consequences of our main result are established in Section \ref{S:hom}.
\section{Lower Bounded Semi-Dirichlet Form}\label{sect:dirichlet}
In this section, we first recall the notion of a lower bounded semi-Dirichlet form on some $L^2$-space 
briefly following \cite{MOR95} and \cite{O13}.

\subsection{Definition of a lower bounded semi-Dirichlet form}\label{sec:notation}

Let $E$ be a locally compact separable metric space and $m$ a positive Radon measure 
on $E$ with full support. Let $\dom$ be a dense subspace of $L^2(E):=L^2(E;m)$ satisfying 
$f\wedge 1 \in \dom$ whenever $f\in \dom$. Denote by $(\cdot,\cdot)$ and $\|\cdot\|$
the inner product and the norm in $L^2(E)$. A bilinear form $\form$ defined on 
$\dom\times \dom$ is called a lower bounded closed form on $L^2(E)$ if 
the following conditions are satisfied: there exists a $\beta\ge 0$ such that

\begin{itemize}
\item[\bf (B1)] (lower boundedness):  For any $u \in \dom$, $\form_{\beta}(u,u)\ge 0$,  
where 
\[
\form_{\beta}(u,v)=\form(u,v)+\beta (u,v), \quad u,v \in \dom.
\] 

\item[\bf (B2)] (weak sector condition): There exists a constant $K\ge 1$ so that 
\[
\big| \form(u,v)\big| \le K \sqrt{\form_{\beta}(u,u)} \cdot \sqrt{\form_{\beta}(v,v)}
\quad {\rm for} \ \ u,v \in \dom.
\]

\item[\bf (B3)] (closedness): The space $\dom$ is closed with respect to the norm 
$\sqrt{\form_{\alpha}(u,u)},\ u \in \dom$, for some $\alpha>\beta$,   or equivalently, 
for all $\alpha>\beta$.
\end{itemize}

For a lower bounded closed form $(\form,\dom)$ on $L^2(E)$ with index $\beta$, 
there exist unique strongly continuous semigroups $\{T_t; t>0\}$, $\{\hat{T}_t; t>0\}$ of linear operators on $L^2(E)$ satisfying

\begin{equation}
(T_tf,g)=(f,\hat{T}_tg) \quad \text{and} \quad
 \|T_tf\|\le e^{\beta t}\|f\|, \ \|\hat{T}_tf\|\le e^{\beta t}\|f\|
\end{equation}
for all $f, g \in L^2(E)$ and $\ t>0$,  such that their Laplace transform $G_{\alpha}$ and $\hat{G}_{\alpha}$ are determined 
for $\alpha>\beta$ by
$$
G_{\alpha} f, \hat{G}_{\alpha} f\in \dom, \ \ \form_{\alpha}(G_{\alpha}f, u)=
\form_{\alpha}(u, \hat{G}_{\alpha}f)=(f,u), \quad f\in L^2(E), \ u\in \dom.
$$
$\{T_t; t>0\}$ is said to be {\it Markovian} if $0\le T_t f \le 1, \ t>0$ whenever  
$f\in L^2(E), \ 0\le f\le 1$.  The semi-group $\{T_t; t>0\}$ is Markovian if and only if 
\begin{itemize}
\item[\bf (B4)] $\dis Uu \in \dom \quad {\rm and} \quad \form(Uu, u-Uu)\ge 0 
\quad {\rm for \ any} \ u \in \dom$, 
\end{itemize}
where $Uu$ denotes the unit contraction of $u$: $Uu=\bigl(0 \vee u\bigr)\wedge 1$.

A lower bounded closed form $(\form, \dom)$ on $L^2(E)$  with index $\beta$ satisfying {\bf (B4)}
is called a {\it lower bounded semi-Dirichlet form} on $L^2(E)$. 
The lower bounded semi-Dirichlet form $(\form, \dom)$ is said to be regular if 
$\dom \cap C_0(E)$ is uniformly dense in $C_0(E)$ and $\form_{\alpha}$-dense in $\dom$ 
for $\alpha>\beta$,  where $C_0(E)$ denotes the space of continuous functions on $E$ 
with compact support.  It is known that a Hunt process is properly associated with any regular lower bounded semi-Dirichlet form on $L^2(E)$
(see, {\it e.g.},  \cite{FOT11} and \cite{O13} for symmetric case).

\subsection{A lower bounded semi-Dirichlet form with singular drift}\label{S:semi-D}

Let $E=D$ be a bounded open set of $\rd \ (d\ge 3)$. We denote the norm on $\rd$ by $|x|$ and 
the inner product by $x \cdot y$ for $x$ and $y$.  We consider the following conditions {\bf (A1)}-{\bf (A3)} 
on the functions $A(x) = (a_{ij}(x)),  B(x)=(b_i(x))$, $C(x)=(c_i(x))$ and $k(x)$:

\begin{itemize}
\item[\bf (A1)] There exist positive constants $\alpha$ and $\beta$ such that
\[
\alpha |\xi|^2 \leq \sum_{i,j=1}^d a_{ij}(x) \xi_i \xi_j \le \beta |\xi|^2
\]
for almost every $x \in D $ and for every $\xi \in \mathbb{R}^d$. 
\item[\bf (A2)] There exists  $p_0\in (d, \infty]$ such that 
$b_i$ and $c_i \in L^{p_0}_{\sf loc}(\rd)$ for $i=1,2,\ldots, d$ and 
$k \in L^{p_0/2}_{\sf loc}(\rd)$. 
\end{itemize}
We further assume the following condition so that the corresponding semigroup is {\it Markovian} 
({\it e.g.},  \cite[(2.16)]{MR92} and \cite{O13}):

\begin{itemize}
\item[\bf (A3)]  $k(x)\ge 0$,  and $\dis - {\rm div} \, C(x) \ge 0$   in the sense that 
\[
\dis \int_{\real^d} C(x) \cdot \nabla  \varphi(x) dx\ge 0 \qquad  {\rm for \,  all }  \ \varphi \in C_0^{\infty}(\real^d)  \ {\rm with} \ \varphi \ge 0. 
\]
\end{itemize}

\medskip
Define a quadratic form as follows.  For $u,v \in C_0^{\infty}(D)$, 
\begin{equation} \label{form0} 
\form (u,v)  \dis := \int_D A \nabla u \cdot \nabla v \,  dx 
+\int_D B \cdot  \nabla u \,  v \, dx   
 +\int_D u \,  C \cdot \nabla v \, dx   +\int_D u \, v\,  k \, dx
\end{equation} 
and the Dirichlet integral by 
\[
{\mathbb D}(u,v) := \int_D \nabla u \cdot \nabla v \, dx.
\]
For $1\le p<\infty$, 
\[
W^{1,p}(D)\, :=\, \Big\{ u\in L^p(D) \ \Big|\ \frac{\partial u}{\partial x_i} \in L^p(D),\ i=1,2,\ldots,d \Big\}
\]
is the Sobolev space equipped with the norm 
\[
\| u \|_{W^{1,p}(D)}\, :=\, \bigg( \| u \|_{L^p(D)}^p 
+ \sum_{i=1}^d \Big\| \frac{\partial u}{\partial x_i} \Big\|_{L^p(D)}^p \bigg)^{\frac1p},\quad u\in W^{1,p}(D).
\]
where $\partial / \partial x_i$ are the derivatives in the sense of distributions.  We denote by $W^{1,p}_0(D)$ 
the closure of $C_0^\infty(D)$ with respect to $W^{1,p}(D)$-norm. Here $W^{1,p}(D)$ and 
$W^{1,p}_0(D)$ are Banach spaces. For $p=2$, the corresponding spaces are denoted by $H^1(D)$ and $H^1_0(D)$, 
respectively. Recall also the following Poincar\'{e} inequality holds for the bounded open set $D$:
\[
\|u\|_{L^2(D)} \leq C_P \bigl\| |\nabla u | \bigr\|_{L^2(D)} = C_P \sqrt{{\mathbb D}(u,u)},\quad u\in C_0^1(D)
\]
where $C_P$ is a positive constant. Therefore $\|u\|_{H^1(D)}^2 ={\mathbb D}(u,u)+(u,u)$ is equivalent to ${\mathbb D}(u,u)$ for $u\in H_0^1(D)$.

We will see that $(\form^\delta, H_0^1(D))$ is a regular,  lower bounded semi-Dirichlet form on 
$L^2(D) :=L^2(D;dx)$ with some (uniform) index $\beta_0>0$ for  $\delta>0$ and 
the existence and the uniqueness of solutions $u^\delta_{\lambda}$ for each $f\in H^{-1}(D)$ and 
$\lambda >\beta_0$ to the following equation:
\begin{equation} \label{main eq.}
\form^\delta (u^\delta_{\lambda}, v) +\lambda (u^\delta_{\lambda}, v)_{L^2(D)}\, =\, \langle f,v \rangle_{H^{-1}, H^1_0}, 
\qquad v\in H_0^1(D).
\end{equation}

Before we construct a semi-Dirichlet form, we prepare the following lemma.

\begin{lemma}  \label{lem:est1}
Assume {\bf (A2)} holds for the function $B(x)$ and $k(x)$. 
Then, for any $\lambda>0$, there exists a constant $\beta_0>0 \, ($depending on $d$, $\lambda$,  $\| \, B \, \|_{L^{p_0}(D)}$ 
and $\| k \|_{L^{p_0/2}(D)})$  so that 
\begin{equation} \label{est-drift}
\Big| \int_D B(x) \cdot \nabla u(x) u(x) dx\Big| \le  \lambda {\mathbb D}(u,u) + \beta_0 \int_D u(x)^2 dx
\end{equation}
and 
\begin{equation} \label{est-pot}
\Big| \int_D u(x)^2 k(x) dx \Big| \le \lambda {\mathbb D}(u,u) + \beta_0 \int_D u(x)^2 dx, \quad 
u \in C_0^{\infty}(D).
\end{equation}
\end{lemma}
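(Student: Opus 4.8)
The plan is to estimate each integral by the same device: interpolate the $L^{p}$-norm of the coefficient against powers of $u$ using Hölder's inequality, then invoke the Sobolev embedding $H^1_0(D)\hookrightarrow L^{2^*}(D)$ (valid since $d\ge 3$, with $2^*=2d/(d-2)$), and finally absorb the resulting Dirichlet-form factor by Young's inequality at the cost of a large $L^2$-constant. The Poincaré inequality on the bounded set $D$ lets us freely replace $\|u\|_{H^1}$ by $\sqrt{\mathbb D(u,u)}$ throughout, which is what makes the clean form of the right-hand side of \eqref{est-drift}--\eqref{est-pot} possible.

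\medskip

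First I would treat the drift term \eqref{est-drift}. Write $p_0'=p_0/(p_0-1)$ for the conjugate exponent; since $p_0>d\ge 3$ we have $p_0'<d/(d-1)<2$, and one checks that $\tfrac{1}{p_0}+\tfrac{1}{2}+\tfrac{1}{q}=1$ with $q$ satisfying $q<2^*$ (indeed $\tfrac1q=\tfrac12-\tfrac1{p_0}>\tfrac12-\tfrac1d=\tfrac1{2^*}$). Apply the three-function Hölder inequality to $|B|\cdot|\nabla u|\cdot|u|$:
\[
\Big|\int_D B\cdot\nabla u\, u\, dx\Big|
\le \|B\|_{L^{p_0}(D)}\,\big\||\nabla u|\big\|_{L^2(D)}\,\|u\|_{L^{q}(D)}.
\]
Since $D$ is bounded, $\|u\|_{L^q(D)}\le |D|^{1/q-1/2^*}\|u\|_{L^{2^*}(D)}\le C_S\|u\|_{H^1_0(D)}\le C_S'\sqrt{\mathbb D(u,u)}$ by Sobolev plus Poincaré. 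Hence the left side is $\le C\sqrt{\mathbb D(u,u)}\cdot\sqrt{\mathbb D(u,u)}$... but that only gives a bound by a constant times $\mathbb D(u,u)$, not the desired form with arbitrarily small $\lambda$. To get the $\lambda$-smallness one must instead split $\|u\|_{L^q}$ by interpolating between $L^2$ and $L^{2^*}$: write $\tfrac1q=\tfrac{\theta}{2}+\tfrac{1-\theta}{2^*}$ with $\theta\in(0,1)$, so $\|u\|_{L^q}\le\|u\|_{L^2}^{\theta}\|u\|_{L^{2^*}}^{1-\theta}\le C\|u\|_{L^2}^{\theta}\,\mathbb D(u,u)^{(1-\theta)/2}$. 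Plugging in and using Young's inequality $ab\le\varepsilon a^{r}+C_\varepsilon b^{r'}$ with the exponents chosen so the $\mathbb D(u,u)$-powers combine to exactly $\mathbb D(u,u)^1$, one obtains $\le\varepsilon\,\mathbb D(u,u)+C(\varepsilon,\|B\|_{L^{p_0}},d)\|u\|_{L^2}^2$; taking $\varepsilon=\lambda$ and renaming the constant $\beta_0$ finishes \eqref{est-drift}.

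\medskip

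For the potential term \eqref{est-pot} the argument is parallel but with the exponent $p_0/2$: since $p_0/2>d/2$, its conjugate $r=(p_0/2)'=p_0/(p_0-2)$ satisfies $r<d/(d-2)$, i.e.\ $2r<2^*$. Hölder gives $\int_D u^2|k|\,dx\le\|k\|_{L^{p_0/2}(D)}\|u\|_{L^{2r}(D)}^2$, and again interpolating $\|u\|_{L^{2r}}$ between $L^2$ and $L^{2^*}$ (legitimate because $2\le 2r<2^*$) followed by Sobolev, Poincaré and Young yields $\le\lambda\,\mathbb D(u,u)+\beta_0\|u\|_{L^2}^2$. Taking $\beta_0$ to be the maximum of the two constants produced makes a single $\beta_0$ work for both inequalities simultaneously.

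\medskip

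\textbf{Main obstacle.} The only genuinely delicate point is the bookkeeping that turns the ``constant times $\mathbb D(u,u)$'' bound into one where the $\mathbb D(u,u)$-coefficient can be made as small as $\lambda$; this forces the interpolation step (using $p_0>d$ strictly, not merely $p_0\ge d$, so that the interpolation exponent $\theta$ is strictly positive) and a careful choice of exponents in Young's inequality. The endpoint case $p_0=\infty$ should be noted as trivial, since then $B\in L^\infty$, $k\in L^\infty$ and \eqref{est-drift}--\eqref{est-pot} follow immediately from $|B\cdot\nabla u\,u|\le\|B\|_\infty|\nabla u||u|$ and the elementary inequality $2|\nabla u||u|\le\varepsilon|\nabla u|^2+\varepsilon^{-1}|u|^2$.
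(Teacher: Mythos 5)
Your proposal is correct and takes essentially the same approach as the paper: both rest on exploiting the strict inequality $p_0>d$ together with H\"older, the Gagliardo--Nirenberg--Sobolev embedding into $L^{2d/(d-2)}(D)$, and Young's inequality to absorb the gradient contribution with an arbitrarily small coefficient $\lambda$, the leftover constant becoming $\beta_0$ (and the case $p_0=\infty$ handled by the elementary $\varepsilon$-split). The only difference is bookkeeping: the paper first splits the integrand pointwise by Young and then interpolates $\int_D b_i^2u^2\,dx$ via a weighted Young plus H\"older step before invoking GNS, whereas you apply a three-exponent H\"older and interpolate $\|u\|_{L^q(D)}$ between $L^2(D)$ and $L^{2d/(d-2)}(D)$ at the norm level --- the same mechanism in a slightly cleaner arrangement.
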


\noindent
\proof. We first show \eqref{est-drift}  in the case where $p_0=\infty$.  
For $u \in C_0^{\infty}(D)$ and $\vareps>0$, we find that 
\begin{equation} \label{est1} 
\begin{split}
\Big| \int_D B(x) \cdot \nabla u(x) u(x)dx  \Big|  
& \le \sum_{i=1}^d \int_D  \big| b_i(x) \partial_i u(x) u(x)\big| dx \\ 
&\le \sum_{i=1}^d \|b_i\|_{L^\infty(D)}  \int_D \big|\partial_i u(x) u(x)\big| dx \\ 
& \le \frac{\dis \max_{1\le i\le d}\|b_i\|_{L^\infty(D)}}2    \sum_{i=1}^d \Big( \vareps 
\int_D \big|\partial_iu(x)\big|^2 dx + \frac 1{\vareps} \int_D u(x)^2 dx \Big) \\
&= \frac{\dis \max_{1\le i\le d}\|b_i\|_{L^\infty(D)}}2  \Big( \vareps {\mathbb D}(u,u) +  \frac d{\vareps} 
\int_D u(x)^2 dx \Big).
\end{split}
\end{equation}
So, taking $\dis \vareps=\lambda/\Big\{ \frac{\max_{1\le i\le d} \|b_i\|_{L^\infty(D)}}2+1\Big\}>0$ 
in \eqref{est1}  for any $\lambda>0$,
$$
\Big| \int_D B(x) \cdot \nabla u(x) u(x) dx\Big| \le  \lambda {\mathbb D}(u,u) + \beta_0 \int_D u(x)^2 dx
$$
holds if we set, for example, $\beta_0=d/\lambda$. Next, we consider the case $d<p_0<\infty$. As in the previous case, we see that for any $\vareps>0$,
\begin{equation} \label{est3} 
\begin{split}
\Big| \int_D B(x)\cdot \nabla u(x) u(x)dx  \Big|  
& \le \sum_{i=1}^d \int_D  \big| b_i(x) \partial_i u(x) u(x)\big| dx   \\
& \le \frac 12 \sum_{i=1}^d \Big( \vareps \int_D |\partial_iu(x)|^2 dx + \frac 1{\vareps} \int_D  b_i(x)^2 u(x)^2 dx \Big) \\
& =   \frac {\vareps}2 {\mathbb D}(u,u) + \frac 1{2\vareps} \sum_{i=1}^d \int_D b_i(x)^2 u(x)^2dx.
\end{split}
\end{equation}
We estimate the second term of the right-hand side of \eqref{est3}.  To this end, we prepare auxiliary positive constants 
$\gamma$ and $p$ with $0<\gamma<2$ and $1<p<p_0/2$ in addition to $\vareps>0$ as follows:
\begin{equation}\label{est4}
\begin{split}
 \sum_{i=1}^d \int_D b_i(x)^2 u(x)^2dx   & = \sum_{i=1}^d \int_D  (\vareps b_i(x)^2 |u(x)|^{\gamma}) \cdot \frac{|u(x)|^{2-\gamma}}{\vareps}  dx   \\ 
& \le \sum_{i=1}^d  \int_D\Big(\frac{ \vareps^p b_i(x)^{2p} |u(x)|^{\gamma p}}p + \frac{|u(x)|^{(2-\gamma)q}}{q\vareps^q} \Big)dx \\
&=\frac{\vareps^p}{p} \sum_{i=1}^d \int_D b_i(x)^{2p} |u(x)|^{\gamma p}dx + \frac d{q \vareps^q} \int_D |u(x)|^{(2-\gamma) q}dx.
\end{split}
\end{equation}
The above inequality follows from the Young inequality for $p$ and $q$ with $1/p+1/q=1$ ($xy \le x^p/p+y^q/q$ for $x,y\ge 0$).  
Then taking $p$ and $\gamma$ to satisfy $(2-\gamma)q=2$, we see that $p$ is equal to $2/\gamma$.  
So, the first term of the right-hand side is estimated by 
\[ 
\begin{split}
& \frac{\vareps^p}{p} \sum_{i=1}^d \int_D b_i(x)^{2p} |u(x)|^{\gamma p}dx \\
& \qquad\; = \frac{\gamma \vareps^{2/\gamma}}2 \sum_{i=1}^d \int_D b_i(x)^{4/\gamma} u(x)^2dx \\
& \qquad\; \le \frac{\gamma \vareps^{2/\gamma}}2 \sum_{i=1}^d  \Big(\int_D |b_i(x)|^{2d/\gamma}dx\Big)^{2/d} 
\Big(\int_D |u(x)|^{2d/(d-2)}dx\Big)^{(d-2)/d},
\end{split}
\]
where we used the H$\ddot{\rm o}$lder inequality in the last inequality with $s=d/2$ and $t=d/(d-2)$  with $1/s+1/t=1$.
Thus if we  set $\gamma :=2d/p_0$ (and then $p=p_0/d$), $0<\gamma<2$ and $1<p<p_0/2$ hold. 
By the estimates above,  we have 
\begin{equation} \label{est5} 
\begin{split}
&  \Big|  \int_D B(x) \cdot \nabla u(x) u(x) dx \Big|    \\
\quad &  \le \frac {\vareps}2 {\mathbb D}(u,u)  
 + \frac 1{2\vareps}  \Big(
\frac{\vareps^{p_0/d}}{p_0} \sum_{i=1}^d \|b_i\|_{L^{p_0}(D)}^{2p_0/d}  \|u\|_{L^{2d/(d-2)}(D)}^2  
 + \frac{d(p_0-d)}{p_0 \vareps^{p_0/(p_0-d)}} \|u\|_{L^2(D)}^2 \Big)
\end{split}
\end{equation}
for any $\vareps>0$. As for the $L^{2d/(d-2)}$-norm of $u$,  
recall the Gagliardo-Nirenberg-Sobolev inequality (\cite[Theorem 4.5.1]{EG92}):  
\begin{equation} \label{eq:GNS}
\Big(\int_D |u(x)|^{pd/(d-p)} dx \Big)^{(d-p)/(pd)} \le K_1 \Big(\int_D |\nabla u(x)|^p dx\Big)^{1/p}
\end{equation}
for $u \in W^{1,p}(D)$ with $1\le p<d$ and $K_1$ depends only on $p$ and $d$.
Then taking $p=2$ in \eqref{eq:GNS},  the right hand side of \eqref{est5} is estimated as 
\[  \label{est6} 
\begin{split}
 \Big|  \int_D B(x) \cdot \nabla u(x) u(x) dx \Big|   & \le  \frac{\vareps}{2} {\mathbb D}(u,u) + \frac 1{2\vareps}\frac{\vareps^{p_0/d}}{p_0} \sum_{i=1}^d \|b_i\|_{L^{p_0}(D)}^{2p_0/d}  K_1^2 {\mathbb D}(u,u)     \\
& \quad + \frac 1{2\vareps}  \frac{d(p_0-d)}{p_0 \vareps^{p_0/(p_0-d)}} \|u\|_{L^2(D)}^2 \\
 & =\Big(\frac{\vareps}2 +\frac{\vareps^{p_0/d-1}}{2p_0} K_1^2  \sum_{i=1}^d \|b_i\|_{L^{p_0}(D)}^{2p_0/d}\Big) 
 {\mathbb D}(u,u)  \\ 
 & \quad + \frac{d(p_0-d)}{2p_0 \vareps^{1+p_0/(p_0-d)}} \|u\|_{L^2(D)}^2.
 \end{split}
\]
Finally, for any $\lambda>0$, noting $p_0>d$, we can take $\vareps>0$ so that 
\[
\frac{\vareps}2 +\frac{\vareps^{p_0/d-1}}{2p_0} K_1^2  \sum_{i=1}^d \|b_i\|_{L^{p_0}(D)}^{2p_0/d}<\lambda
\]
and then putting  $\beta_0=\frac{d(p_0-d)}{2p_0 \vareps^{1+p_0/(p_0-d)}}$, 
the desired inequality \eqref{est-drift} holds. As for the inequality \eqref{est-pot}, it follows from the subsequent argument of 
\eqref{est4}. \hfill \fbox{}

\smallskip

\begin{remark} Note that the following inequality also holds:
$$
 \Big|\int_D B(x) \cdot \nabla u(x) u(x)dx \Big| 
 \le \frac{2(d-1)}{d-2} \Big(\sum_{i=1}^d \|b_i\|^2_{L^d(D)}\Big)^{1/2} {\mathbb D}(u,u), \quad u \in C_0^{\infty}(D).
$$
\end{remark}

\smallskip
Now, we construct a lower bounded closed form on $L^2(D)$ with singular drifts and potential. 
\begin{prop} \label{DF} 
Assume {\bf (A1)} holds for $A(x)$, {\bf (A2)}  holds for $B(x), C(x)$ and $k(x)$.  
The quadratic form  $\form$ defined in \eqref{form0} satisfies the conditions 
{\bf (B1)} and {\bf (B2)}  for $\dom=C_0^{\infty}(D)$ and some constant $\beta_0\ge 0$. 
\end{prop}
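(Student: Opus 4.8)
The plan is to verify \textbf{(B1)} and \textbf{(B2)} directly from the defining expression \eqref{form0} with $\dom=C_0^\infty(D)$, letting the ellipticity bound \textbf{(A1)} supply the coercivity while Lemma~\ref{lem:est1} (and one estimate contained in its proof) absorbs the drift and potential contributions. For lower boundedness, fix $u\in C_0^\infty(D)$; then \textbf{(A1)} gives $\int_D A\nabla u\cdot\nabla u\,dx\ge\alpha\,{\mathbb D}(u,u)$, while the diagonal terms $\int_D(B\cdot\nabla u)u\,dx$, $\int_D u(C\cdot\nabla u)\,dx$ and $\int_D k\,u^2\,dx$ are each of the type estimated by \eqref{est-drift}--\eqref{est-pot} of Lemma~\ref{lem:est1} — for the $C$-term one applies \eqref{est-drift} with $C$ in place of $B$, which is legitimate since $C$ also satisfies \textbf{(A2)}. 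Applying the lemma three times with parameter $\lambda=\alpha/4$ yields constants $\beta_1,\beta_2,\beta_3\ge 0$, and with $\beta_0:=\beta_1+\beta_2+\beta_3$ one gets $\form_{\beta_0}(u,u)\ge\frac{\alpha}{4}\,{\mathbb D}(u,u)\ge 0$, which is \textbf{(B1)}. Combining this with the Poincar\'e inequality $\|u\|_{L^2(D)}^2\le C_P^2\,{\mathbb D}(u,u)$ also gives a coercivity estimate $\form_{\beta_0}(u,u)\ge c_0\,\|u\|_{H^1(D)}^2$ with $c_0:=\frac{\alpha}{8}\min\{1,C_P^{-2}\}>0$, which I will use next.

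For the weak sector condition I would first record the auxiliary bound already implicit in the proof of Lemma~\ref{lem:est1}: inspecting \eqref{est4}--\eqref{est5} (and, when $p_0=\infty$, arguing trivially) shows that for every $\eta>0$ there is $C_\eta>0$ with $\int_D|B|^2w^2\,dx\le\eta\,{\mathbb D}(w,w)+C_\eta\,\|w\|_{L^2(D)}^2$ for all $w\in C_0^\infty(D)$, and likewise with $|B|^2$ replaced by $|C|^2$ or by $|k|$ (the last being \eqref{est-pot}). Taking $\eta=1$, each of $\int_D|B|^2w^2$, $\int_D|C|^2w^2$, $\int_D|k|w^2$ is $\le M_1\|w\|_{H^1(D)}^2$ for a single constant $M_1\ge 1$. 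I then bound the four terms of $\form(u,v)$: the leading one by $\beta\sqrt{{\mathbb D}(u,u)}\sqrt{{\mathbb D}(v,v)}$ via \textbf{(A1)} and Cauchy--Schwarz; the term $\int_D(B\cdot\nabla u)v\,dx$ by $\bigl(\int_D|B|^2v^2\bigr)^{1/2}\sqrt{{\mathbb D}(u,u)}\le\sqrt{M_1}\,\|u\|_{H^1(D)}\|v\|_{H^1(D)}$; the term $\int_D u(C\cdot\nabla v)\,dx$ symmetrically; and $\int_D k\,uv\,dx$ by $\bigl(\int_D|k|u^2\bigr)^{1/2}\bigl(\int_D|k|v^2\bigr)^{1/2}\le M_1\|u\|_{H^1(D)}\|v\|_{H^1(D)}$. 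Summing, $|\form(u,v)|\le M\|u\|_{H^1(D)}\|v\|_{H^1(D)}$ for some $M$, and then the coercivity estimate $\|w\|_{H^1(D)}^2\le c_0^{-1}\form_{\beta_0}(w,w)$ gives $|\form(u,v)|\le K\sqrt{\form_{\beta_0}(u,u)}\sqrt{\form_{\beta_0}(v,v)}$ with $K:=\max\{1,M/c_0\}\ge 1$, i.e.\ \textbf{(B2)}.

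The only genuine obstacle I foresee is the pair of non-symmetric cross terms $\int_D(B\cdot\nabla u)v\,dx$ and $\int_D u(C\cdot\nabla v)\,dx$ appearing in \textbf{(B2)}: the \emph{statement} of Lemma~\ref{lem:est1} controls only the diagonal integrals $\int_D(B\cdot\nabla w)w\,dx$, and polarizing it through $w=u+tv$ recovers at best the symmetric combination $\int_D(B\cdot\nabla u)v\,dx+\int_D(B\cdot\nabla v)u\,dx$, never the single integral. This is what forces one to reopen the proof of the lemma and extract the $L^2$-type control of $|B|^2$, $|C|^2$ and $|k|$ (where the Gagliardo--Nirenberg--Sobolev inequality \eqref{eq:GNS}, and with it the standing assumption $d\ge 3$, enters). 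Everything else is a routine combination of ellipticity with the Cauchy--Schwarz and Poincar\'e inequalities, and since the constant $K$ in \textbf{(B2)} is required to be at least $1$, one simply enlarges it if the bookkeeping produces something smaller.
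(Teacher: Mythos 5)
Your proposal is correct and follows essentially the same route as the paper: ellipticity from \textbf{(A1)} for the leading term, Lemma~\ref{lem:est1} applied with a small parameter (you use $\alpha/4$, the paper $\alpha/3$) to absorb the $B$-, $C$- and $k$-terms for \textbf{(B1)}, and the resulting equivalence of $\form_{\beta_0}(\cdot,\cdot)$ with the $H^1$-norm to obtain \textbf{(B2)}. The only difference is that you spell out the off-diagonal bounds for \textbf{(B2)} by extracting the estimate $\int_D|B|^2w^2\,dx\le\eta\,{\mathbb D}(w,w)+C_\eta\|w\|_{L^2(D)}^2$ from the proof of Lemma~\ref{lem:est1}, a step the paper leaves implicit in the phrase ``using this inequality with Lemma~\ref{lem:est1}''.
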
 

\proof  From the condition {\bf (A1)},  we see that
$$
\alpha {\mathbb D}(u,u)\, \leq\, \int_D A(x) \nabla u(x)\cdot \nabla u(x) dx
\le \beta{\mathbb D}(u,u), \quad  u \in C_0^{\infty}(D).
$$
According to the lemma \ref{lem:est1}, for each $\lambda>0$ there exists $\beta_0=\beta_0(\lambda)>0$ so that 
\[
\begin{split} 
& \max\Big\{ \Big|\int\limits_D B(x)  \cdot \nabla u(x) u(x)dx \Big|,  
\Big|\int\limits_D u(x) \, C(x)\cdot \nabla  u(x)dx \Big|, 
\Big| \int\limits_D u(x)^2 k(x)dx \Big| \Big\}  \\
& \qquad  \le \lambda {\mathbb D}(u,u) + \beta_0 \|u\|^2_{L^2(D)}, 
  \end{split}
\]
whence {\bf (B1)} holds so that $\form(u,u)+\beta_0(u,u) \ge 0, \ u \in C_0^{\infty}(D)$. 
Note also that 
$$
(\gamma \wedge 1) \|u\|_{H^1(D)}^2 \le  {\mathbb D}(u,u)+ \gamma \|u\|_{L^2(D)}^2 \le 
(\gamma \vee 1) \|u\|_{H^1(D)}^2, \quad  u\in H^1(D)
$$
holds for any $\gamma>0$.  Then we can find constants $\kappa_1$ and $\kappa_2$ for each $\beta>\beta_0$ 
so that 
\begin{equation}  \label{comparison} 
\kappa_1 \| u\|_{H^1(D)}^2 \le \form(u,u) +\beta (u,u) \le \kappa_2 \| u\|_{H^1(D)}^2, \ u\in C_0^{\infty}(D).
\end{equation}
Using this inequality with Lemma \ref{lem:est1},  we see that the condition {\bf (B2)} holds.
 \hfill \fbox{}

\bigskip
The estimate \eqref{comparison} shows that $\sqrt{\form_{\beta}(\cdot, \cdot)}$ for $\beta>\beta_0$ and 
the Sobolev norm $\|\cdot\|_{H^1(D)}$ are equivalent. So, taking the closure of $C_0^{\infty}(D)$ with respect to 
$\| \cdot \|_{H^1(D)}^2$-norm, \eqref{comparison} also holds for $u\in H_0^1(D)$.  Then, we can conclude that 
{\bf (B3)} holds with $\dom=H_0^1(D)$.  
Moreover, by the {\rm Poincar\'{e}} inequality, we see that $\form_\beta(\cdot, \cdot)$ 
and the Dirichlet integral ${\mathbb D}$ are also equivalent.  Then, we can obtain the following theorem.

\begin{theorem} \label{prop:form0}
Assume {\bf (A1)-(A2)} hold. Then the form $\form$ defined by \eqref{form0} extends from 
$C_0^{\infty}(D)\times C_0^{\infty}(D)$ to $H^1_0(D) \times H^1_0(D)$ to be a regular lower bounded closed 
form on $L^2(D)$ satisfying {\bf (B1)-(B3)} with $\dom=H_0^1(D)$ and 
the parameter $\beta_0$ appeared in the proof of Proposition {\rm \ref{DF}}.
\end{theorem}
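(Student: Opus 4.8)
The plan is to transfer the conclusions of Proposition~\ref{DF}, which are proved on the core $C_0^{\infty}(D)$, to its $H^1$-closure $H^1_0(D)$, using the two-sided bound \eqref{comparison} as the bridge between $\sqrt{\form_\beta(\cdot,\cdot)}$ and the Sobolev norm $\|\cdot\|_{H^1(D)}$. First I would combine {\bf (B2)} on $C_0^{\infty}(D)$ with the right-hand inequality in \eqref{comparison} to get, for a fixed $\beta>\beta_0$, the bound $|\form(u,v)|\le K\sqrt{\form_\beta(u,u)}\,\sqrt{\form_\beta(v,v)}\le K\kappa_2\,\|u\|_{H^1(D)}\|v\|_{H^1(D)}$ for $u,v\in C_0^{\infty}(D)$; that is, $\form$ is a bounded bilinear form on the $\|\cdot\|_{H^1(D)}$-dense subspace $C_0^{\infty}(D)$ of $H^1_0(D)$, hence it admits a unique bounded bilinear extension to $H^1_0(D)\times H^1_0(D)$.

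Next I would check that this extension still coincides with the explicit expression \eqref{form0}, i.e. that each of the four integrals appearing there is a separately $H^1$-continuous bilinear form on $H^1_0(D)\times H^1_0(D)$. The principal part is immediate from {\bf (A1)}. Since $D$ is bounded, {\bf (A2)} gives $B, C\in L^{p_0}(D)\subset L^{d}(D)$ and $k\in L^{p_0/2}(D)\subset L^{d/2}(D)$, and the Sobolev embedding $H^1_0(D)\hookrightarrow L^{2d/(d-2)}(D)$ together with H\"older's inequality (exponents $d$, $2$, $2d/(d-2)$, whose reciprocals sum to $1$) yield $\bigl|\int_D (B\cdot\nabla u)\,v\,dx\bigr|\le \|B\|_{L^d(D)}\,\|\nabla u\|_{L^2(D)}\,\|v\|_{L^{2d/(d-2)}(D)}\le C\,\|u\|_{H^1(D)}\|v\|_{H^1(D)}$, the $C$-term being symmetric in $u,v$, and the potential term being estimated the same way with $k\in L^{d/2}(D)$ and two factors in $L^{2d/(d-2)}(D)$. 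Therefore, approximating $u,v\in H^1_0(D)$ by $u_n,v_n\in C_0^{\infty}(D)$ in $\|\cdot\|_{H^1(D)}$, the limit of $\form(u_n,v_n)$ equals $\form(u,v)$ as given by \eqref{form0}, and the constants from Lemma~\ref{lem:est1} and Proposition~\ref{DF}, in particular $\beta_0=\beta_0(\alpha/3)$, are unchanged.

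With the extension identified, {\bf (B1)} and {\bf (B2)} pass to $H^1_0(D)$ by continuity: along $u_n,v_n\in C_0^{\infty}(D)$ with $u_n\to u$, $v_n\to v$ in $H^1(D)$, the quadratic form $\form_\beta(\cdot,\cdot)$ is $H^1$-continuous and is nonnegative on $C_0^{\infty}(D)$ for every $\beta\ge\beta_0$ (so the square roots in {\bf (B2)} survive in the limit), which gives $\form_{\beta_0}(u,u)\ge 0$ and $|\form(u,v)|\le K\sqrt{\form_\beta(u,u)}\sqrt{\form_\beta(v,v)}$ for all $u,v\in H^1_0(D)$. For {\bf (B3)}, the estimate \eqref{comparison} — valid on all of $H^1_0(D)$ by the closure argument already noted right after Proposition~\ref{DF} — shows that $\sqrt{\form_\alpha(\cdot,\cdot)}$ and $\|\cdot\|_{H^1(D)}$ are equivalent norms on $H^1_0(D)$ for every $\alpha>\beta_0$; since $(H^1_0(D),\|\cdot\|_{H^1(D)})$ is complete, so is $(H^1_0(D),\sqrt{\form_\alpha(\cdot,\cdot)})$, which is exactly closedness with $\dom=H^1_0(D)$. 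Regularity is then immediate, because $C_0^{\infty}(D)\subset H^1_0(D)\cap C_0(D)$ is uniformly dense in $C_0(D)$ by mollification and $\form_\alpha$-dense in $H^1_0(D)$, the latter being by definition the $\|\cdot\|_{H^1(D)}$-closure of $C_0^{\infty}(D)$ with the norm equivalence just noted.

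I expect the only point requiring genuine care to be the identification in the second paragraph — confirming that the singular drift and potential integrals are individually well defined and $H^1$-continuous on $H^1_0(D)\times H^1_0(D)$, so that the extended form is literally \eqref{form0} and not merely an abstract completion — together with the observation, implicit there, that boundedness of $D$ upgrades the purely local integrability hypotheses in {\bf (A2)} to the global $L^{p_0}(D)$, $L^{p_0/2}(D)$ (hence $L^{d}(D)$, $L^{d/2}(D)$) bounds that the H\"older estimates require. Everything else is a routine density-and-completeness transfer from the core $C_0^{\infty}(D)$ to its $H^1_0(D)$-closure.
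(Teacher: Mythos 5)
Your proposal is correct and follows essentially the same route as the paper: the form is extended from the core $C_0^{\infty}(D)$ to its $H^1$-closure $H^1_0(D)$ via the norm equivalence \eqref{comparison}, with \textbf{(B1)}--\textbf{(B2)} passing to the limit by continuity, \textbf{(B3)} coming from completeness of $H^1_0(D)$ under the equivalent norm, and regularity from density of $C_0^{\infty}(D)$. The extra care you take in identifying the abstract extension with the explicit expression \eqref{form0} (via H\"older, $L^{p_0}(D)\subset L^d(D)$, $L^{p_0/2}(D)\subset L^{d/2}(D)$ on the bounded set $D$, and the Sobolev embedding $H^1_0(D)\hookrightarrow L^{2d/(d-2)}(D)$) is a detail the paper leaves implicit, but it is consistent with, not different from, the paper's argument.
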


\medskip
We further assume {\bf (A3)} holds  for $C(x)$ and $k(x)$ in addition to {\bf (A1)} and  {\bf (A2)}.  
Recall that $Uu$ is the unit contraction of a function $u$ on $D$, 
$Uu(x):=\big( 0 \vee  u(x) \big) \wedge 1, \ x\in D$. 
Then, it is known that $Un \in H^1_0(D)$ if $u\in H^1_0(D)$ and, we find that,  by \cite[\S II,2]{MR92} 
or \cite[\S 1.5]{O13},  $\form(Uu, u-Uu)\ge 0$ holds.
Therefore, the previous proposition readily leads us to the following result.
\begin{cor}
Assume {\bf (A1)-(A3)} hold. Then the form $(\form, H^1_0(D))$ is a regular lower bounded 
semi-Dirichlet form on $L^2(D)$ with the index $\beta_0$ appeared in the proof of Proposition {\rm \ref{DF}}.
Namely, the pair $(\form, H^1_0(D))$  satisfies  {\bf (B1)-(B3)} and {\bf (B4)} as well. 
\end{cor}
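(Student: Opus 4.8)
The plan is to collect the ingredients already assembled in this section. By Theorem~\ref{prop:form0}, under \textbf{(A1)}--\textbf{(A2)} the form $(\form, H^1_0(D))$ obtained by extending \eqref{form0} from $C_0^\infty(D)\times C_0^\infty(D)$ is a regular lower bounded closed form on $L^2(D)$ with index $\beta_0$; hence \textbf{(B1)}--\textbf{(B3)} and regularity are in hand, and the only thing left is the Markovian condition \textbf{(B4)}: for every $u\in H^1_0(D)$ one has $Uu\in H^1_0(D)$ and $\form(Uu,u-Uu)\ge 0$.

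First I would record that $Uu=(0\vee u)\wedge 1$ lies in $H^1_0(D)$ whenever $u$ does, since $Uu$ is the composition of $u$ with a Lipschitz function fixing the origin and $D$ is bounded; in particular the pointwise chain-rule identities for $\nabla Uu$, $\nabla(u-Uu)$, $Uu$ and $u-Uu$ displayed above hold a.e. I would cite the standard Sobolev-space calculus (Stampacchia's truncation theorem) for this rather than reprove it. Using those identities, $\form(Uu,u-Uu)$ reduces exactly as in the computation preceding the statement: since $\nabla Uu$ is supported in $\{0\le u\le 1\}$, where both $u-Uu$ and $\nabla(u-Uu)$ vanish, the $A$-term and the $B$-term drop out, leaving
\[
\form(Uu,u-Uu)=\int_{D\cap\{u>1\}}C(x)\cdot\nabla u(x)\,dx+\int_{D\cap\{u>1\}}\bigl(u(x)-1\bigr)k(x)\,dx .
\]
The second integral is nonnegative pointwise, because $k\ge 0$ by \textbf{(A3)} and $u-1>0$ on $\{u>1\}$; I would also note that it converges absolutely, since $(u-1)^+\in L^{2d/(d-2)}(D)$ by the Sobolev embedding while $k\in L^{p_0/2}_{\sf loc}(\rd)\subset L^{2d/(d+2)}(D)$ as $D$ is bounded and $p_0>d\ge 3$.

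The one genuinely substantive step is the first integral. I would rewrite it as $\int_D C\cdot\nabla(u-1)^+\,dx$ with $(u-1)^+=(u-1)\vee 0\in H^1_0(D)$ nonnegative, and then invoke \textbf{(A3)}. Since \textbf{(A3)} is stated only for $\varphi\in C_0^\infty(\rd)$ with $\varphi\ge 0$, the point is to extend the inequality $\int_D C\cdot\nabla\varphi\,dx\ge 0$ to $\varphi=(u-1)^+$; I would do this by approximating $(u-1)^+$ in the $H^1_0(D)$-norm by nonnegative functions in $C_0^\infty(D)$ (truncate a smooth approximation and mollify) and passing to the limit, which is legitimate because $C\in L^{p_0}_{\sf loc}(\rd)\subset L^2(D)$ makes $\varphi\mapsto\int_D C\cdot\nabla\varphi\,dx$ continuous on $H^1_0(D)$. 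This density/limit argument is the main, and essentially the only, obstacle. Granting it, both integrals are $\ge 0$, so \textbf{(B4)} holds, and together with Theorem~\ref{prop:form0} one concludes that $(\form,H^1_0(D))$ is a regular lower bounded semi-Dirichlet form on $L^2(D)$ with index $\beta_0$.
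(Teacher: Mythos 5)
Your proposal is correct and follows essentially the same route as the paper: (B1)--(B3) and regularity come from Theorem~\ref{prop:form0}, and (B4) is verified via the same pointwise identities for $Uu$ and $u-Uu$, reducing $\form(Uu,u-Uu)$ to the two integrals over $\{u>1\}$ which are nonnegative by \textbf{(A3)}. The only difference is that you spell out the approximation step extending the distributional inequality $-\operatorname{div} C\ge 0$ from nonnegative $C_0^\infty$ test functions to $(u-1)^+\in H^1_0(D)$, which the paper leaves implicit; this is a legitimate and welcome piece of added rigor, not a change of method.
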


\subsection[tdsfasfsa]{A sequence of lower bounded closed forms  having singular drifts and potentials}

In this subsection we consider  that the functions $A^\delta(x)=(a_{ij}^\delta(x))$, $B^\delta(x)=(b_i^\delta(x))$,
 $C^\delta(x)=(c_i^\delta(x))$  and $k^\delta(x)$ defined on $\rd$ satisfy the following conditions 
 {\bf (A1)$'$-(A3)$'$} instead of {\bf (A1)-(A3)} for $\delta>0$:

\begin{itemize}
\item[\bf (A1)$'$]
 There exist positive constants $\alpha$ and $\beta$, 
$$
\alpha |\xi|^2 \leq \sum_{i,j=1}^d a_{ij}^\delta(x) \xi_i \xi_j \le \beta |\xi|^2
$$
for almost every $x \in D $ and every $\xi \in \mathbb{R}^d$.  

\item[\bf (A2)$'$] There exists  $p_0\in (d, \infty]$ such that $b_i^\delta$ and $c_i^\delta$ belong to 
$L^{p_0}_{\sf loc}(\rd)$ for $i=1,2,\ldots, d$, and $k^\delta$ belongs
  to $L^{p_0/2}_{\sf loc}(\rd)$ such that 
\[ 
\sup_{1\le i \le d \atop \delta>0} \|b_{i}^\delta\|_{L^{p_0}(F)}<\infty, \ 
\sup_{1\le i \le d \atop \delta>0} \|c_{i}^\delta\|_{L^{p_0}(F)}<\infty \ {\rm and} \ 
\sup_{\delta>0} \| k^\delta\|_{L^{p_0/2}(F)}<\infty
\]
 for each  compact  set  $F\subset \real^d$.
\end{itemize}
We also consider the condition.
\begin{itemize}
\item[\bf (A3)$'$]   $k^\delta(x)\ge 0$,  and $\dis - {\rm div} \, C^\delta(x) \ge 0$ in the sense that
\[ 
\dis \int_{\real^d} C^\delta(x) \cdot \nabla  \varphi(x) dx\ge 0 \qquad  {\rm for \, all}  \ 
\varphi \in C_0^{\infty}(\real^d) \,\, \text{with} \,\, \varphi \ge 0. 
\] 
\end{itemize}
For $\delta>0$, define quadratic forms on $L^2(D)$ as follows: for $u,v \in H^1_0(D)$, 
\begin{align} \label{form} \nonumber 
\form^\delta (u,v) & := \int_D A^\delta (x) \nabla u(x) \cdot \nabla v(x) dx 
+\int_D B^\delta (x) \cdot \nabla u(x) v(x) dx  \\
& \quad \;  +\int_D u(x) C^\delta (x) \cdot \nabla v(x) dx  + \int_D u(x) v(x) k^\delta(x)dx.
\end{align} 
Replacing $\form$ to $\form^{\delta}$ (that is, replacing $(A, B, C,k)$ to $(A^{\delta}, B^{\delta}, 
C^{\delta}, k^\delta)$ in the expression of \eqref{form0}),  Theorem \ref{prop:form0} and its corollary are modified as follows.

\begin{theorem} \label{delta-DF} 
Assume {\bf (A1)$'$-(A2)$'$} hold.  For each $\delta >0$, $(\form^\delta, H_0^1(D))$ is a  regular lower bounded 
closed form on $L^2(D)$ with (universal) index $\beta_0$.
\end{theorem}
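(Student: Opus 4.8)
The plan is to observe that, for each fixed $\delta>0$, the hypotheses {\bf (A1)$'$-(A2)$'$} specialize to {\bf (A1)-(A2)} for the tuple $(A^\delta, B^\delta, C^\delta, k^\delta)$, so that Theorem \ref{prop:form0} already gives that $(\form^\delta, H_0^1(D))$ is a regular lower bounded closed form on $L^2(D)$ with {\it some} index. The only genuinely new point is that the index $\beta_0$ may be taken the same for all $\delta>0$, and for this one re-runs the proofs of Lemma \ref{lem:est1} and Proposition \ref{DF} while tracking how the constants depend on $B^\delta$, $C^\delta$, $k^\delta$.

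Concretely, I would fix a compact set $F \subset \real^d$ with $\bar D \subset F$ and put
\[
M_B := \sup_{1\le i\le d,\ \delta>0}\|b_i^\delta\|_{L^{p_0}(F)}, \quad M_C := \sup_{1\le i\le d,\ \delta>0}\|c_i^\delta\|_{L^{p_0}(F)}, \quad M_k := \sup_{\delta>0}\|k^\delta\|_{L^{p_0/2}(F)},
\]
all finite by {\bf (A2)$'$}. Inspecting the proof of Lemma \ref{lem:est1}: when $p_0=\infty$, the choice of $\vareps$ and the resulting $\beta_0$ depend on $B$ only through $\max_i\|b_i\|_{L^\infty(D)}\le M_B$; when $d<p_0<\infty$, the admissible $\vareps$ (chosen so that $\tfrac{\vareps}{2}+\tfrac{\vareps^{p_0/d-1}}{2p_0}K_1^2\sum_i\|b_i\|_{L^{p_0}(D)}^{2p_0/d}<\lambda$) and then $\beta_0=\tfrac{d(p_0-d)}{2p_0\,\vareps^{1+p_0/(p_0-d)}}$ depend on $B$ only through $\sum_i\|b_i\|_{L^{p_0}(D)}^{2p_0/d}\le d\,M_B^{2p_0/d}$. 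Hence \eqref{est-drift} holds with one and the same $\beta_0$ for every $\delta>0$; the same reasoning applied with $C^\delta$ in place of $B^\delta$, and with $k^\delta$ for \eqref{est-pot}, produces a single $\beta_0=\beta_0(d,\lambda,M_B,M_C,M_k)$ valid for all $\delta>0$.

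With this $\delta$-uniform estimate, the argument of Proposition \ref{DF} goes through verbatim with $(A^\delta,B^\delta,C^\delta,k^\delta)$ in place of $(A,B,C,k)$: taking $\lambda=\alpha/3$ gives {\bf (B1)} with the $\delta$-independent index $\beta_0$, and since $\alpha,\beta,\beta_0$ are all independent of $\delta$, the comparison inequality \eqref{comparison} holds for every $\delta>0$ with $\delta$-independent $\kappa_1,\kappa_2$ (for each fixed $\beta>\beta_0$); this yields {\bf (B2)} uniformly, and, passing to the $\|\cdot\|_{H^1(D)}$-closure of $C_0^\infty(D)$, {\bf (B3)} with $\dom=H_0^1(D)$. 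Regularity is immediate and $\delta$-free: $C_0^\infty(D)\subset H_0^1(D)\cap C_0(D)$ is uniformly dense in $C_0(D)$ and $\form^\delta_\alpha$-dense in $H_0^1(D)$ because $\sqrt{\form^\delta_\alpha(\cdot,\cdot)}$ is equivalent to $\|\cdot\|_{H^1(D)}$. The only step requiring real care is the bookkeeping in the previous paragraph — checking that no constant in Lemma \ref{lem:est1} or Proposition \ref{DF} depends on $\delta$ except through the uniformly controlled quantities $M_B,M_C,M_k$ and the fixed data $d,\alpha,\beta,p_0,D$.
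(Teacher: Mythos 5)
Your proposal is correct and follows essentially the same route as the paper, which simply invokes Theorem \ref{prop:form0} and Proposition \ref{DF} with $(A,B,C,k)$ replaced by $(A^\delta,B^\delta,C^\delta,k^\delta)$; the uniform-in-$\delta$ index comes, exactly as you argue, from the fact that the constants in Lemma \ref{lem:est1} depend on the coefficients only through the $L^{p_0}$- and $L^{p_0/2}$-norms, which {\bf (A2)$'$} bounds uniformly over $\delta$ on a compact set containing $\overline{D}$. Your write-up merely makes explicit the constant-tracking that the paper leaves implicit.
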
 

\begin{cor}
Assume {\bf (A1)$'$-(A3)$'$} hold.  Then, for each $\delta>0$,   the form $(\form^\delta, H^1_0(D))$ is a regular lower bounded 
semi-Dirichlet form on $L^2(D)$ with the index $\beta_0$.
\end{cor}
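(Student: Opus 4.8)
The plan is to reduce the corollary to results already in hand. By Theorem \ref{delta-DF}, for each fixed $\delta>0$ the pair $(\form^\delta,H_0^1(D))$ is a regular lower bounded closed form on $L^2(D)$ with index $\beta_0$, so \textbf{(B1)}--\textbf{(B3)} already hold; only the Markovian property \textbf{(B4)} remains to be checked, i.e.\ that $Uu\in H_0^1(D)$ and $\form^\delta(Uu,u-Uu)\ge 0$ for every $u\in H_0^1(D)$, where $Uu=(0\vee u)\wedge 1$. This is precisely the $\delta$-analogue of the computation performed just before the $\delta$-free corollary, now using \textbf{(A1)$'$}--\textbf{(A3)$'$} in place of \textbf{(A1)}--\textbf{(A3)}, together with the observation that the index $\beta_0$ produced by Lemma \ref{lem:est1} is uniform in $\delta$ thanks to the uniform bounds in \textbf{(A2)$'$}.

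First I would recall that the unit contraction maps $H_0^1(D)$ into itself, with $\nabla Uu$ and $\nabla(u-Uu)$ given by the case formulas displayed in the text: $\nabla Uu$ is supported in $\{0\le u\le 1\}$, $\nabla(u-Uu)$ is supported in $\{u<0\}\cup\{u>1\}$, and $u-Uu$ vanishes on $\{0\le u\le 1\}$. Substituting these into the definition of $\form^\delta$, the diffusion term $\int_D A^\delta\,\nabla Uu\cdot\nabla(u-Uu)\,dx$ vanishes since $\{0\le u\le1\}$ and $\{u<0\}\cup\{u>1\}$ are disjoint, the term $\int_D B^\delta\cdot\nabla Uu\,(u-Uu)\,dx$ vanishes since $u-Uu=0$ wherever $\nabla Uu\neq 0$, and one is left with
\[
\form^\delta(Uu,u-Uu)=\int_{D\cap\{u>1\}} C^\delta\cdot\nabla u\,dx+\int_{D\cap\{u>1\}}(u-1)\,k^\delta\,dx .
\]

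It then remains to show both terms are nonnegative. The potential term is $\ge 0$ at once, since $u-1>0$ and $k^\delta\ge 0$ on $\{u>1\}$ by \textbf{(A3)$'$}, the integrand lying in $L^1(D)$ because $0\le(u-1)^+\le u^2$ pointwise while $u^2k^\delta\in L^1(D)$ by (the $\delta$-version of) Lemma \ref{lem:est1}. For the drift term I would write $\int_{D\cap\{u>1\}} C^\delta\cdot\nabla u\,dx=\int_D C^\delta\cdot\nabla(u-1)^+\,dx$, with $(u-1)^+\in H_0^1(D)$ nonnegative, and then extend the distributional inequality $-{\rm div}\,C^\delta\ge 0$ from nonnegative $\varphi\in C_0^\infty(\real^d)$ to nonnegative $\varphi\in H_0^1(D)$. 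This extension is legitimate because $C^\delta\in L^{p_0}(D)\subset L^2(D)$ on the bounded set $D$, so $\varphi\mapsto\int_D C^\delta\cdot\nabla\varphi\,dx$ is continuous on $H_0^1(D)$, and any nonnegative element of $H_0^1(D)$ is an $H^1$-limit of nonnegative functions in $C_0^\infty(D)$ (mollify and take positive parts). Hence $\int_D C^\delta\cdot\nabla(u-1)^+\,dx\ge 0$, so $\form^\delta(Uu,u-Uu)\ge 0$ and \textbf{(B4)} holds; combined with Theorem \ref{delta-DF} this proves the corollary.

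I expect the only point needing care to be this last density step — approximating a nonnegative $H_0^1$ function by nonnegative test functions with convergence of the gradients — which is nonetheless standard; everything else is the verbatim $\delta$-superscripted repetition of the computation already presented in the text.
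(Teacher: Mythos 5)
Your proposal is correct and follows essentially the same route as the paper: invoke Theorem \ref{delta-DF} for \textbf{(B1)}--\textbf{(B3)} and verify \textbf{(B4)} by the unit-contraction computation, which reduces $\form^\delta(Uu,u-Uu)$ to $\int_{D\cap\{u>1\}}C^\delta\cdot\nabla u\,dx+\int_{D\cap\{u>1\}}(u-1)k^\delta\,dx$ and uses \textbf{(A3)$'$}. Your extra density step (rewriting the drift term as $\int_D C^\delta\cdot\nabla(u-1)^+\,dx$ and extending the distributional inequality from nonnegative $C_0^\infty$ functions to nonnegative elements of $H_0^1(D)$) merely makes explicit a justification the paper leaves implicit, and it is sound.
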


\begin{remark} Suppose that $B(x)$ satisfy {\bf (A2)} and set $B^\delta(x)=B(x/\delta)$ for $\delta>0$.
Then {\bf (A2)$'$} hold for $\{B^\delta\}_{\delta>0}$ provided that $x\mapsto B(x)$ is $Y$-periodic. We impose the 
condition {\bf (A2)$'$} since $x\mapsto B(x)$  is not necessarily $Y$-periodic.
\end{remark}

We now show the existence of solutions and derive a priori estimate of the solutions.

\begin{theorem} \label{thm:exist} 
Assume {\bf (A1)$'$-(A2)$'$} hold.   
Then for any $\delta>0$, \ $\lambda>\beta_0$  and $f\in H^{-1}(D)$, there exists a unique 
element $u^{\delta}_{\lambda} \in H^1_0(D)$ such that 
\begin{equation} \label{apriori1}
\form^{\delta}_{\lambda}(u^{\delta}_{\lambda}, \varphi)=\langle f, \varphi \rangle_{H^{-1}, H^1_0}, 
\quad  \varphi \in H_0^1(D)
\end{equation}
and,  the sequence $\{u^{\delta}_{\lambda}, \delta>0\}$  satisfies that 
\begin{equation} \label{apriori2}
\\|u^{\delta}_{\lambda}\|_{H^1} \le c_1 \|f\|_{H^{-1}}
\end{equation}
for some constant $c_1>0$. 
\end{theorem}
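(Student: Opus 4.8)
The plan is to obtain existence and uniqueness from the (non-symmetric) Lax--Milgram--Lions lemma applied to the bilinear form $\form^\delta_\lambda = \form^\delta + \lambda(\cdot,\cdot)$ on the Hilbert space $H^1_0(D)$, and then to extract the a priori bound \eqref{apriori2} by testing \eqref{apriori1} against the solution $u^\delta_\lambda$ itself.

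First I would check that, for $\lambda>\beta_0$, the form $\form^\delta_\lambda$ is bounded and coercive on $H^1_0(D)$ with constants independent of $\delta$. Boundedness is immediate from {\bf (B2)} together with the comparison estimate \eqref{comparison}: for $u,v\in H^1_0(D)$,
\[
|\form^\delta_\lambda(u,v)|\le |\form^\delta(u,v)|+\lambda\|u\|\,\|v\|\le (K\kappa_2+\lambda)\,\|u\|_{H^1(D)}\|v\|_{H^1(D)}.
\]
For coercivity, fix $\beta$ with $\beta_0<\beta<\lambda$; then \eqref{comparison} gives
\[
\form^\delta_\lambda(u,u)=\bigl(\form^\delta(u,u)+\beta(u,u)\bigr)+(\lambda-\beta)\|u\|^2\ge \kappa_1\,\|u\|_{H^1(D)}^2,\qquad u\in H^1_0(D).
\]
The one point requiring care is that $\beta_0$, $\kappa_1$, $\kappa_2$ are \emph{uniform in $\delta$}: tracing the proofs of Lemma \ref{lem:est1} and Proposition \ref{DF}, these constants depend only on $d$, $p_0$, $\alpha$, $\beta$, $\lambda$ and on $\sup_{i,\delta}\|b_i^\delta\|_{L^{p_0}(\overline D)}$, $\sup_{i,\delta}\|c_i^\delta\|_{L^{p_0}(\overline D)}$, $\sup_\delta\|k^\delta\|_{L^{p_0/2}(\overline D)}$, all finite by {\bf (A2)$'$} since $\overline D$ is compact.

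Since $f\in H^{-1}(D)=\bigl(H^1_0(D)\bigr)^*$, the map $\varphi\mapsto\langle f,\varphi\rangle_{H^{-1},H^1_0}$ is a bounded linear functional on $H^1_0(D)$ of norm $\|f\|_{H^{-1}(D)}$. The Lax--Milgram--Lions theorem then produces a unique $u^\delta_\lambda\in H^1_0(D)$ satisfying \eqref{apriori1}; uniqueness also follows directly from coercivity, since a solution $w$ of the homogeneous problem obeys $\kappa_1\|w\|_{H^1(D)}^2\le \form^\delta_\lambda(w,w)=0$.

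Finally, to prove \eqref{apriori2}, I would take $\varphi=u^\delta_\lambda$ in \eqref{apriori1} and use coercivity:
\[
\kappa_1\,\|u^\delta_\lambda\|_{H^1(D)}^2\le \form^\delta_\lambda(u^\delta_\lambda,u^\delta_\lambda)=\langle f,u^\delta_\lambda\rangle_{H^{-1},H^1_0}\le \|f\|_{H^{-1}(D)}\,\|u^\delta_\lambda\|_{H^1(D)},
\]
so $\|u^\delta_\lambda\|_{H^1(D)}\le \|f\|_{H^{-1}(D)}/\kappa_1=:c_1$, which is independent of $\delta$ because $\kappa_1$ is. I do not anticipate any real obstacle: the argument is the standard application of Lions' generalization of Lax--Milgram, and the only thing to monitor is the $\delta$-uniformity of the ellipticity/coercivity constants, which is exactly what the uniform local integrability hypotheses in {\bf (A2)$'$} are designed to supply.
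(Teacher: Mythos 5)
Your proposal is correct and follows essentially the same route as the paper: boundedness and coercivity of $\form^\delta_\lambda$ on $H^1_0(D)$ via \eqref{comparison} (with constants uniform in $\delta$ thanks to {\bf (A2)$'$}), existence and uniqueness by Lax--Milgram, and the a priori bound by taking $\varphi=u^\delta_\lambda$ in \eqref{apriori1} to get $\kappa_1\|u^\delta_\lambda\|_{H^1}^2\le\|f\|_{H^{-1}}\|u^\delta_\lambda\|_{H^1}$, i.e. $c_1=\|f\|_{H^{-1}}/\kappa_1$. Your explicit tracking of the $\delta$-uniformity of $\beta_0,\kappa_1,\kappa_2$ is a helpful addition that the paper leaves implicit.
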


\begin{proof}
The existence and the uniqueness of solutions are easy. In fact,  since $\form^{\delta}_{\lambda}$ is a bounded bilinear 
functional on $H^1_0(D)$ and satisfies  the coerciveness,  
there exists a unique $u^{\delta}_{\lambda} \in H^1_0(D)$ for each $f\in H^{-1}(D)$,  $\delta>0$ and $\lambda>\beta_0$ 
such that \eqref{apriori1} holds by the Lax-Milgram theorem.
As for the priori estimate \eqref{apriori2},  plugging $\varphi=u^{\delta}_{\lambda}$ in \eqref{apriori1} 
as a test function, we find that 
$$
\form^{\delta}_{\lambda}(u^{\delta}_{\lambda}, u^{\delta}_{\lambda}) 
= \big|\langle f, u^{\delta}_{\lambda} \rangle_{H^{-1}, H^1_0} \big| \le \|f\|_{H^{-1}} \cdot \|u^{\delta}_{\lambda}\|_{H^1}
$$
holds ({\it c.f.} \cite[\S 2]{TU22}).  Here $\|f\|_{H^{-1}}$ is the continuity constant of $f$ as a functional of $H^1_0(D)$.
As for the left-hand side,  we see that, from \eqref{comparison}, 
$$
\kappa_1 \|u^{\delta}_{\lambda} \|_{H^1}^2  \le \form^{\delta}_{\lambda} (u^{\delta}_{\lambda}, u^{\delta}_{\lambda}).
$$
So
$$
\kappa_1  \|u^{\delta}_{\lambda}\|_{H^1}^2 \le  \|f\|_{H^{-1}} \cdot \|u^{\delta}_{\lambda}\|_{H^1}
$$
and,  putting $c_1=1/\kappa_1$, we see that \eqref{apriori2} holds. 
\end{proof}

\bigskip
Since $\{u^{\delta}_{\lambda}\}_{\delta>0}$ is a bounded sequence in $H^1_0(D)$,  we find that, 
up to a subsequence,  $\{u^{\delta}_{\lambda}\}$ converges to a limit $u_0$ {\it weakly} in $H_0^1(D)$ and 
it also converges to $u_0$ {\it strongly} in $L^2(D)$ by the fact that $H_0^1(D)$ is compactly embedded into $L^2(D)$.
Then, our next goal is to show the limit's uniqueness and find the homogenized equation that is satisfied by it.
To this end, we prepare some notions and introduce the unfolding operators. 
\section{Unfolding Operators}\label{sec:unfold}

\subsection{Periodic Sobolev Spaces}

Recall that $Y=(0,1)^d$ is the open unit cube in $\rd$.  Define a periodic 
Sobolev space $W^{1,p}_{\#}(Y)$ for $1\le p<\infty$ as the closure of $C_{\#}^{\infty}(Y)$, the restrictions of 
smooth functions defined on $\rd$ which are $Y$-periodic,  with respect to the 
$W^{1,p}(Y)$-norm, and then it becomes a Banach space with the same $W^{1,p}(Y)$-norm. 
For $p=2$, corresponding space $W^{1,2}_{\#}(Y)$ is also denoted by $H_{\#}(Y)$:
$$
H_{\#}(Y) \ = \ \mbox{the  closure of $C_{\#}^{\infty}(Y)$ \ with respect to  $H^1(Y)$-norm}.
$$
Define also $C_{\#}(Y)$ the space of all continuous functions defined on 
 $\rd$ which are $Y$-periodic.  Then, the following results are known. 

\begin{lemma} {\rm (\cite[Proposition\ 3.49]{CD10})} 
If $u\in H_{\#}(Y)$, then the trace of $u$ on the ``opposite faces of $Y$" are equal.
\end{lemma}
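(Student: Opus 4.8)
The plan is to use that $H_{\#}(Y)$ is by definition the $H^1(Y)$-closure of the periodic smooth functions $C_{\#}^{\infty}(Y)$, for which the assertion is immediate, and then to pass to the limit via continuity of the trace operator. For $i=1,\dots,d$ write $\Gamma_i^{0}:=\{x\in\partial Y : x_i=0\}$ and $\Gamma_i^{1}:=\{x\in\partial Y : x_i=1\}$ for the pair of opposite faces of $Y$ orthogonal to ${\bm e}_i$, and identify $\Gamma_i^{0}$ with $\Gamma_i^{1}$ through the lattice translation $x'\mapsto x'+{\bm e}_i$ (this is precisely the identification meant by ``equal on the opposite faces''). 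Since $Y=(0,1)^d$ is a bounded Lipschitz domain, the trace operator $\gamma\colon H^1(Y)\to L^2(\partial Y)$ is bounded; in particular, restricting to each face, the maps $\gamma_i^{0}\colon H^1(Y)\to L^2(\Gamma_i^{0})$ and $\gamma_i^{1}\colon H^1(Y)\to L^2(\Gamma_i^{1})$ are bounded linear operators.

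First I would record the statement for $\varphi\in C_{\#}^{\infty}(Y)$: by definition such a $\varphi$ is the restriction to $Y$ of a $Y$-periodic function in $C^{\infty}(\rd)$, so $\varphi(x)=\varphi(x+{\bm e}_i)$ for all $x$, and hence $\gamma_i^{0}\varphi(x')=\gamma_i^{1}\varphi(x'+{\bm e}_i)$ for every $x'\in\Gamma_i^{0}$ and every $i$; that is, under the above identification the two face traces of $\varphi$ coincide. Next, given $u\in H_{\#}(Y)$, choose $(\varphi_n)\subset C_{\#}^{\infty}(Y)$ with $\varphi_n\to u$ in $H^1(Y)$. By boundedness of the face trace operators, $\gamma_i^{0}\varphi_n\to\gamma_i^{0}u$ in $L^2(\Gamma_i^{0})$ and $\gamma_i^{1}\varphi_n\to\gamma_i^{1}u$ in $L^2(\Gamma_i^{1})$. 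Since the translation $x'\mapsto x'+{\bm e}_i$ induces an isometry of $L^2(\Gamma_i^{0})$ onto $L^2(\Gamma_i^{1})$, the smooth case gives $\gamma_i^{0}\varphi_n(\cdot)=\gamma_i^{1}\varphi_n(\cdot+{\bm e}_i)$ in $L^2(\Gamma_i^{0})$ for every $n$; letting $n\to\infty$ yields $\gamma_i^{0}u(\cdot)=\gamma_i^{1}u(\cdot+{\bm e}_i)$ in $L^2(\Gamma_i^{0})$, for each $i=1,\dots,d$, which is the claim.

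There is no genuine obstacle here: the only points needing a word of care are the boundedness of the trace operator on the cube $Y$ (valid because $Y$ is Lipschitz) and the bookkeeping of the identification of opposite faces through the translations ${\bm e}_i$, so that the limiting equality of traces is compared on the common parameter domain $\Gamma_i^{0}$.
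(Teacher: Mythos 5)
Your proof is correct; note that the paper offers no proof of this lemma at all, merely citing \cite[Proposition 3.49]{CD10}, so there is nothing internal to compare against. Your argument --- equality of the face traces for $\varphi\in C_{\#}^{\infty}(Y)$ under the translation identification, followed by passage to the limit using the boundedness of the trace operator on the Lipschitz domain $Y$ and the definition of $H_{\#}(Y)$ as the $H^1(Y)$-closure of $C_{\#}^{\infty}(Y)$ --- is the standard proof of the cited proposition and fits exactly the definition of $H_{\#}(Y)$ used in this paper.
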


We define the quotient space ${\mathcal W}^{1,p}_{\#}(Y):= {W}^{1,p}_{\#}(Y)/\real$ of 
$W^{1,p}_{\#}(Y)$ relative to the following relation ``$\sim$":  \ for 
$u,v \in W^{1,p}_{\#}(Y)$, 
$$
u \sim v \quad \Longleftrightarrow \quad u-v = \ {\rm constant} \ {\rm a.e. \ on}  \ Y.
$$
In other words, the space ${\mathcal W}^{1,p}_{\#}(Y)$ is the collection of all equivalent classes 
$\tilde{u}$ of functions $u$ in $W^{1,p}_{\#}(Y)$:
$$
\tilde{u}:=\big\{ v\in W^{1,p}_{\#}(Y) : u\sim v \big\}, \quad u \in W^{1,p}_{\#}(Y).
$$
For the case $p=2$, we also use ${\mathcal H}_{\#}(Y)$ instead of ${\mathcal W}^{1,2}_{\#}(Y)$.

\begin{lemma} {\rm (\cite[Proposition\ 3.52]{CD10})}  The space ${\mathcal H}_{\#}(Y)$ is a 
Hilbert space under the following inner product:
$$
\big(\! \big( \tilde{u}, \tilde{v} \big)\! \big) := \int_Y \nabla f(x) \cdot \nabla g(x)dx, \quad 
f \in \tilde{u},\ g \in  \tilde{v}, \ \tilde{u}, \tilde{v} \in {\mathcal H}_{\#}(Y).
$$
Moreover the dual space $\big({\mathcal H}_{\#}(Y)\big)'$ of ${\mathcal H}_{\#}(Y)$ is identified 
with the set
$$
\Big\{ \ell \in \big(H_{\#}(Y)\big)' : \ \ell(c)=0, \ c\in \real \Big\},
$$
where 
$$
\langle \ell, \tilde{u}\rangle_{({\mathcal H}_{\#}(Y))',\, {\mathcal H}_{\#}(Y)} =
\langle \ell, f \rangle_{(H_{\#}(Y))',\, H_{\#}(Y)} \quad  {\rm for} \quad  f \in \tilde{u}.
$$
\end{lemma}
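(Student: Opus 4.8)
The plan is to treat the two assertions separately, deriving both from the Poincar\'{e}--Wirtinger inequality on the cube $Y$ together with elementary facts about quotient spaces.

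First I would check that $((\cdot,\cdot))$ is well defined on $\mathcal H_\#(Y)$: if $f_1\sim f_2$ then $f_1-f_2$ is a.e.\ constant, so $\nabla f_1=\nabla f_2$ a.e., and similarly in the second slot; hence $\int_Y\nabla f\cdot\nabla g\,dx$ is independent of the chosen representatives $f\in\tilde u$, $g\in\tilde v$. Bilinearity and symmetry are immediate, and $((\tilde u,\tilde u))=\int_Y|\nabla f|^2\,dx\ge 0$; if this vanishes then $\nabla f=0$ a.e.\ on the connected set $Y$, so $f$ is a.e.\ constant, i.e.\ $\tilde u=\tilde 0$. Thus $((\cdot,\cdot))$ is an inner product. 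For completeness I would use that $H_\#(Y)$, being by definition the closure of $C_\#^\infty(Y)$ in $H^1(Y)$, is a closed subspace of the Hilbert space $H^1(Y)$ and hence itself complete, together with the Poincar\'{e}--Wirtinger inequality
$$
\|f-\langle f\rangle_Y\|_{L^2(Y)}\le C_W\,\|\nabla f\|_{L^2(Y)},\qquad f\in H^1(Y),
$$
where $\langle f\rangle_Y:=\int_Y f(x)\,dx$ and $C_W>0$ depends only on $d$ (this is classical, $Y$ being a bounded connected Lipschitz domain). Given a sequence $\{\tilde u_n\}$ Cauchy for $((\cdot,\cdot))$, I would pick representatives $f_n\in\tilde u_n$ normalized by $\langle f_n\rangle_Y=0$ (possible since constants belong to $H_\#(Y)$); then $\{\nabla f_n\}$ is Cauchy in $L^2(Y)^d$, so by Poincar\'{e}--Wirtinger $\{f_n\}$ is Cauchy in $L^2(Y)$, hence in $H^1(Y)$, with limit $f\in H_\#(Y)$. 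Since $((\tilde u_n-\tilde f,\tilde u_n-\tilde f))=\|\nabla(f_n-f)\|_{L^2(Y)}^2\le\|f_n-f\|_{H^1(Y)}^2\to 0$, it follows that $\tilde u_n\to\tilde f$ in $\mathcal H_\#(Y)$, so $\mathcal H_\#(Y)$ is complete.

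For the dual space I would invoke the standard duality between a quotient and an annihilator. Let $q\colon H_\#(Y)\to\mathcal H_\#(Y)$, $q(f)=\tilde f$, be the canonical surjection; it is bounded, since $\|\tilde f\|_{\mathcal H_\#(Y)}=\|\nabla f\|_{L^2(Y)}\le\|f\|_{H^1(Y)}$, and its kernel is exactly the subspace $\real$ of constant functions in $H_\#(Y)$. If $L\in\bigl(\mathcal H_\#(Y)\bigr)'$, then $\ell:=L\circ q\in\bigl(H_\#(Y)\bigr)'$ satisfies $\ell(c)=L(\tilde c)=0$ for every constant $c$. Conversely, if $\ell\in\bigl(H_\#(Y)\bigr)'$ vanishes on constants, then $L(\tilde u):=\ell(f)$ for $f\in\tilde u$ is unambiguous, because $f\sim g$ forces $\ell(f)-\ell(g)=\ell(f-g)=0$; and $L$ is bounded on $\mathcal H_\#(Y)$, since choosing the mean-zero representative $f$ of $\tilde u$ and applying Poincar\'{e}--Wirtinger once more gives $|L(\tilde u)|=|\ell(f)|\le\|\ell\|\,\|f\|_{H^1(Y)}\le C\,\|\ell\|\,\|\nabla f\|_{L^2(Y)}=C\,\|\ell\|\,\|\tilde u\|_{\mathcal H_\#(Y)}$. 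The maps $L\mapsto\ell$ and $\ell\mapsto L$ are mutually inverse, which is precisely the asserted identification, and the pairing formula $\langle\ell,\tilde u\rangle_{(\mathcal H_\#(Y))',\,\mathcal H_\#(Y)}=\langle\ell,f\rangle_{(H_\#(Y))',\,H_\#(Y)}$ holds for every $f\in\tilde u$ by construction.

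The routine points --- bilinearity, the a.e.\ gradient identities, the selectability of a mean-zero representative in each class, and the mutual inverseness of the two maps --- are all straightforward. The only genuine input is the Poincar\'{e}--Wirtinger inequality on $Y$, so I do not anticipate a real obstacle: the lemma is essentially a repackaging of that inequality together with elementary quotient-space functional analysis.
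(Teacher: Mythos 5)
Your proof is correct: the paper itself gives no argument for this lemma (it is quoted directly from \cite[Proposition 3.52]{CD10}), and your derivation via the Poincar\'e--Wirtinger inequality on $Y$ together with the standard quotient/annihilator duality is exactly the classical argument behind the cited result. All the delicate points (well-definedness on equivalence classes, completeness via mean-zero representatives, boundedness of the induced functional on the quotient) are handled properly, so nothing needs to be added.
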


We now show the existence of functions, so-called the {\it correctors}, that appear as solutions of variational formulas, which play an important role in identifying the limits of the solutions to homogenization problems.
 
\begin{prop}[Correctors] \label{prop:corr}
Let $A(x,y)=(a_{ij}(x,y))$ and $C(x,y)=(c_i(x,y))$ be functions defined on 
$D\times Y$ satisfying the following conditions: 
\begin{equation} \label{corre1}
\alpha |\xi|^2 \le \sum_{i,j=1}^d a_{ij}(x,y) \xi_i \xi_j \le \beta |\xi|^2, \quad \forall \, \xi \in \real^d, \ {\rm a.e.} \ (x,y) \in 
D\times Y
\end{equation}
and there exists $p_0 \in (d, \infty]$ such that 
\begin{equation} \label{corre2}
c_i \in L^{p_0}(D\times Y),  \quad i=1,2,\ldots, d.
\end{equation}
Then the following hold for a.e.  $x\in D$:
\begin{itemize}
\item[\bf (1)]  For each $i=1,2,\ldots,d$, there exists a unique (up to an additive constant) $\omega_i \in L^2(D;H_{\#}(Y))$ so that
\begin{equation} \label{effect}
\int_Y A(x,y) \nabla_y \omega_i(x, y) \cdot \nabla \varphi(y)dy = -\int_Y A(x, y) {\bm e}_i \cdot \nabla \varphi(y)dy, \quad 
\varphi \in H_{\#}(Y).
\end{equation}
\item[\bf (2)] 
There exists a unique (up to an additive constant) $\omega_0 \in L^2(D;H_{\#}(Y))$ so that 
\begin{equation} \label{effect1}
\int_Y A(x, y) \nabla_y \omega_0(x, y) \cdot \nabla \varphi(y)dy = -\int_Y C(x, y) \cdot \nabla \varphi(y)dy, \quad 
\varphi \in H_{\#}(Y).
\end{equation}
\end{itemize}
Moreover, such $\omega_i$ is unique up to an additive constant with respect to $y$ and 
$\big\| |\nabla_y \,\omega_i(x, \cdot)|\big\|_{L^2(Y)}$ belong to $L^\infty(D)$ for $1\le i \le d$. 
If $C \in (L^\infty(D; L^{p_0}(Y)))^d$ holds, then $\big\| |\nabla_y \,\omega_0(x, \cdot)|\big\|_{L^2(Y)}$ 
also  belongs to $L^\infty(D)$.
\end{prop}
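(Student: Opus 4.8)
The plan is to carry out the construction pointwise in $x$: for a.e.\ fixed $x\in D$ I would solve the cell problems \eqref{effect} and \eqref{effect1} by the Lax--Milgram theorem on the Hilbert space $\mh$, record the resulting a priori bounds on $\|\,|\nabla_y\omega_i(x,\cdot)|\,\|_{L^2(Y)}$, and then upgrade the family $x\mapsto\omega_i(x,\cdot)$ (and $x\mapsto\omega_0(x,\cdot)$) to a strongly measurable, hence Bochner-integrable, map into $H_{\#}(Y)$, so that it belongs to $L^2(D;H_{\#}(Y))$.

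\textbf{Construction for fixed $x$.} For a.e.\ $x\in D$ consider the bilinear form $\mathcal{B}_x(\tilde u,\tilde v):=\int_Y A(x,y)\nabla f(y)\cdot\nabla g(y)\,dy$ for representatives $f\in\tilde u$, $g\in\tilde v$; it is well defined on $\mh$ since $\nabla f$ depends only on $\tilde u$, it is coercive with $\mathcal{B}_x(\tilde u,\tilde u)\ge\alpha\,\|\tilde u\|_{\mh}^2$ by \eqref{corre1}, and it is bounded uniformly in $x$ by the boundedness of the coefficients $a_{ij}$. The map $\varphi\mapsto-\int_Y A(x,y){\bm e}_i\cdot\nabla\varphi(y)\,dy$ is a bounded linear functional on $H_{\#}(Y)$ annihilating the constants, so it defines an element of $(\mh)'$ by the identification of $(\mh)'$ recalled above; for \eqref{effect1} one uses in addition that $C(x,\cdot)\in L^{p_0}(Y)\hookrightarrow L^2(Y)$ (as $|Y|=1$ and $p_0>2$), so $\varphi\mapsto-\int_Y C(x,y)\cdot\nabla\varphi(y)\,dy$ is bounded on $H_{\#}(Y)$ with norm at most $\|C(x,\cdot)\|_{L^2(Y)}$. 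Lax--Milgram then yields unique $\tilde\omega_i(x,\cdot),\tilde\omega_0(x,\cdot)\in\mh$; I would fix the mean-zero representatives $\omega_i(x,\cdot),\omega_0(x,\cdot)\in H_{\#}(Y)$. Uniqueness in $\mh$ together with this normalization shows that any other solution of \eqref{effect} in $H_{\#}(Y)$ differs from $\omega_i(x,\cdot)$ by a $y$-constant, which is exactly the claimed uniqueness up to an additive constant with respect to $y$; the same applies to $\omega_0$.

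\textbf{A priori estimates.} Testing \eqref{effect} and \eqref{effect1} against the solution itself and using coercivity with Cauchy--Schwarz gives, for a.e.\ $x$,
\[
\|\,|\nabla_y\omega_i(x,\cdot)|\,\|_{L^2(Y)}\le \tfrac1\alpha\|A(x,\cdot){\bm e}_i\|_{L^2(Y)}\le c,\qquad \|\,|\nabla_y\omega_0(x,\cdot)|\,\|_{L^2(Y)}\le \tfrac1\alpha\|C(x,\cdot)\|_{L^2(Y)},
\]
with $c$ independent of $x$ and $i$; hence $\|\,|\nabla_y\omega_i(x,\cdot)|\,\|_{L^2(Y)}\in L^\infty(D)$, and $\omega_i\in L^2(D;H_{\#}(Y))$ since $D$ is bounded. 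For $\omega_0$: as $c_i\in L^{p_0}(D\times Y)$ with $p_0>d\ge 3$, Fubini gives $x\mapsto\|C(x,\cdot)\|_{L^{p_0}(Y)}\in L^{p_0}(D)\subset L^2(D)$, so $\|\,|\nabla_y\omega_0(x,\cdot)|\,\|_{L^2(Y)}\le\alpha^{-1}\|C(x,\cdot)\|_{L^{p_0}(Y)}\in L^2(D)$ and $\omega_0\in L^2(D;H_{\#}(Y))$; if moreover $C\in(L^\infty(D;L^{p_0}(Y)))^d$, the right-hand side is bounded uniformly in $x$, giving $\|\,|\nabla_y\omega_0(x,\cdot)|\,\|_{L^2(Y)}\in L^\infty(D)$.

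\textbf{Measurability in $x$ --- the main obstacle.} The genuinely delicate point is that $x\mapsto\omega_i(x,\cdot)$ and $x\mapsto\omega_0(x,\cdot)$ must be shown to be strongly measurable as $H_{\#}(Y)$-valued maps, which together with the estimates above yields membership in $L^2(D;H_{\#}(Y))$. I would fix a countable orthonormal basis $\{\psi_n\}_{n\ge 1}$ of the separable Hilbert space $\mh$ and consider the Galerkin approximations $\omega_i^N(x,\cdot)=\sum_{n=1}^N c_n^N(x)\psi_n$, where $c^N(x)$ solves the $N\times N$ linear system with matrix $(\mathcal{B}_x(\psi_n,\psi_m))_{n,m}$ and right-hand side $\big(-\int_Y A(x,\cdot){\bm e}_i\cdot\nabla\psi_m\big)_m$; by Fubini these data depend measurably on $x$, the matrix is invertible by the uniform coercivity, and Cramer's rule exhibits $c^N(x)$, hence $\omega_i^N(x,\cdot)$, as a measurable $\mh$-valued map. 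C\'ea's lemma (uniform boundedness and coercivity) gives $\omega_i^N(x,\cdot)\to\tilde\omega_i(x,\cdot)$ in $\mh$ for a.e.\ $x$, so $x\mapsto\tilde\omega_i(x,\cdot)$ is measurable as an a.e.\ pointwise limit, and passing to the mean-zero representative (on which the $\mh$- and $H^1(Y)$-norms are equivalent by the Poincar\'e--Wirtinger inequality) gives measurability of $x\mapsto\omega_i(x,\cdot)\in H_{\#}(Y)$. The identical argument, with $A(x,\cdot){\bm e}_i$ replaced by $C(x,\cdot)$, handles $\omega_0$, and combining this with the a priori bounds completes the proof.
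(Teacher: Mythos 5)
Your proof is correct, but it follows a genuinely different route from the paper's. The paper applies the Lax--Milgram theorem a single time, globally, on the space $H=L^2(D;H_{\#}(Y))$ (viewed through the quotient $\tilde H=L^2(D;\mathcal{H}_{\#}(Y))$), with the bilinear form $\tilde{\mathcal E}(u,v)=\iint_{D\times Y}A(x,y)\nabla_y u\cdot\nabla_y v\,dy\,dx$ and the functionals $\ell_i(v)=-\iint_{D\times Y}A(x,y){\bm e}_i\cdot\nabla_y v\,dy\,dx$, $\ell_0(v)=-\iint_{D\times Y}C(x,y)\cdot\nabla_y v\,dy\,dx$; the correctors are thus obtained directly as elements of $L^2(D;H_{\#}(Y))$, so the measurability in $x$ that occupies the last third of your argument never arises, and the pointwise-in-$x$ cell equations \eqref{effect}--\eqref{effect1} are then recovered by testing with separated functions $\varphi_1(x)\varphi_2(y)$, $\varphi_1\in C_0^\infty(D)$, $\varphi_2\in H_{\#}(Y)$, and a density argument. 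You instead solve the cell problem for each fixed $x$ on $\mathcal{H}_{\#}(Y)$ and then supply the measurable-dependence step by Galerkin approximation, Cramer's rule and C\'ea's lemma, which is a standard and sound way to close exactly the gap your pointwise construction opens. What each approach buys: yours gives the a.e.-$x$ equations and the uniform gradient bound immediately from the pointwise Lax--Milgram estimate, at the price of the measurability argument; the paper's avoids measurability entirely but must perform the localization/density step to pass from the integrated identity to \eqref{effect}--\eqref{effect1}. Your final estimates (testing the cell equation with the solution itself, H\"older in $y$, and the hypothesis $C\in(L^\infty(D;L^{p_0}(Y)))^d$ for the $L^\infty$ bound on $\|\,|\nabla_y\omega_0(x,\cdot)|\,\|_{L^2(Y)}$) coincide with the paper's, and your observation that $C\in(L^{p_0}(D\times Y))^d$ alone already gives $\omega_0\in L^2(D;H_{\#}(Y))$ via Fubini is the pointwise counterpart of the paper's continuity of $\ell_0$ on $H$. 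One shared caveat, not a defect relative to the paper: both arguments use boundedness of the entries of $A$, which for a nonsymmetric matrix is slightly more than the quadratic-form bound \eqref{corre1} literally states, though it holds in the intended application.
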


\begin{proof}
 These are shown by using the Lax-Milgram theorem in $H:=L^2(D; H_{\#}(Y))$. 
 Set
$$
\ell_i(v):=  -\iint_{D\times Y} A(x, y){\bm e}_i \cdot \nabla_y v(x, y) dydx, \quad v\in H \quad i=1,2,\ldots,d, 
$$
and 
$$
\ell_0(v):= -\iint_{D\times Y} C(x, y) \cdot \nabla_y v(x, y) dydx, \quad v\in H.
$$
Then we find that $\ell_i(v)=\ell_i(v')$ if $v$ and $v'$ are in $H$ so that $v-v'$ is constant with respect to $y$ 
for a.e. $x\in D$.  In other words, $\ell_i$ is a linear functional on $\tilde{H}:=L^2(D; {\mathcal H}_{\#}(Y))$, 
not just on $H$.  According to \eqref{corre1}  for $i=1,2,\ldots, d$, and Lemma \ref{lem:est1} and \eqref{corre2} 
for $i=0$,  we see
\[
\big|\ell_i(v)\big|  \le c \sqrt{\iint_{D\times Y} \nabla_y v(x, y) \cdot \nabla_y v(x, y) dy dx}  
 = c \|v\|_H,  \quad v\in H \ \  ({\rm or} \ \tilde{v} \in \tilde{H})
\]
for some constant $c>0$.  That is, $\ell_i$ are continuous on $H$. 
On the other hand, we also see that 
$$
\tilde{\form}(u,v):=\iint_{D\times Y} A(x, y) \nabla_y u(x, y)\cdot \nabla_y v(x, y) dydx, \quad u,v  \in \tilde{H}
$$
defines a coercive bilinear form on $\tilde{H}$, which is also continuous. Thus, by the Lax-Milgram theorem,  
there exists an $\omega_i \in H$ for each $i=0,1,2,\ldots, d$ so that 
$$
\tilde{\form}(\omega_i, v)=\ell_i(v), \quad v \in H.
$$
Thus, we have for all $\varphi \in H$, 
\begin{align*}
\tilde{\form}(\omega_i, \varphi) 
& =\iint\limits_{D\times Y} A(x, y) \nabla_y  \omega_i(x, y) \cdot \nabla_y \varphi(x,y) dydx  \\
& =- \iint\limits_{D \times Y} A(x, y){\bm e}_i \cdot \nabla_y \varphi(x, y) dydx  
\ =\ell_i(\varphi),  \qquad i=1,2,\ldots, d
\end{align*}
and 
\[
\tilde{\form}(\omega_0, \varphi)  =\iint\limits_{D\times Y} A(x, y) \nabla_y  \omega_0(x, y) 
\cdot \nabla_y \varphi(x, y) dydx 
 =-\iint\limits_{D\times Y} C(x, y) \cdot \nabla_y \varphi (x, y) dydx 
=\ell_0(\varphi).
\]
On the other hand,  plugging $\varphi(x,y)=\varphi_1(x) \varphi_2(y)$ for $\varphi_1 \in C_0^\infty(D)$ and $\varphi_2 \in H_{\#}(Y)$ into 
the above equations as test functions,  it follows that \eqref{effect} and \eqref{effect1} hold by the denseness of $C_0^\infty(D)\times H_{\#}(Y)$ in 
$H=L^2(D;H_{\#}(Y))$. 

Finally, because of the definition of ${\mathcal H}_{\#}(Y)$,  we can conclude that $\omega_i$ is unique up to an additive 
constant (with respect to $y$) for $i=0,1,2,\ldots, d$.  As for the boundedness of the derivative of $\omega_i(x,y)$ 
with respect to $y$, we only show for $w_0$ because of the proofs for the other $\omega_i$ are much simpler.  
Since $\omega_0(x,y)$ belongs to $H_{\#}(Y)$ as a function of $y$ for a.e. $x\in D$, 
plugging $\varphi(y)=\omega_0(x, y)$ in \eqref{effect1} as a test function, we  see that 
\begin{align*}
& \alpha \big\| |\nabla_y \omega_0(x, \cdot)|\big\|_{L^2(Y)}^2  \\
& \qquad =  \alpha \int_Y |\nabla_y \omega_0(x, y)|^2dy  \ \le \int_Y A(x,y) \nabla_y \omega_0(x, y) \cdot \nabla_y \omega_0(x, y)dy \\
&\qquad  = -\int_Y C(x,y) \nabla_y \, \omega_0(x,y)dy \ = - \sum_{i=1}^d \int_Y c_i(x,y) \frac{\partial \omega_0}{\partial y_i}(x,y)dy  \\
&\qquad \le \big\| |\nabla_y \omega(x, \cdot)|\big\|_{L^2(Y)}
 \sqrt{\sum_{i=1}^d  \int_Y c_i(x,y)^2 dy}   \\
&\qquad \le \big\| |\nabla_y \omega(x, \cdot)|\big\|_{L^2(Y)} \sqrt{ \sum_{i=1}^d \Big(\int_Y |c_i(x,y)|^{p_0}dy \Big)^{2/p_0} 
\Big(\int_Y 1^{p_0/(p_0-2)}dy \Big)^{(p_0-2)/p_0} }.
\end{align*}
So the inequality 
$$
\big\| |\nabla_y \omega_0(x, \cdot)|\big\|_{L^2(Y)} \le \frac 1\alpha \Big( \sum_{i=1}^d \Big(\int_Y |c_i(x,y)|^{p_0}dy \Big)^{2/p_0}\Big)^{1/2}
$$
holds and the right hand side is bounded in $x$ by the assumption $c_i \in L^\infty(D; L^{p_0}(Y))$.
\end{proof}

\subsection{Unfolding Operators}

For each $x\in \rd$,  one has $x= [x]+ \{x\}$, where $[x]=(m_1,m_2,\ldots,m_d)^{\sf T} \in{\mathbb Z}^d$ denotes the  
unique integer combination $\sum_{j=1}^d m_j {\bm e}_j$ such that $\{x\}=x-[x]$ belongs to 
$Y$ a.e.   Then for a.e.  $x\in \rd$ and $\delta>0$, 
$$
x=\delta \Big( \Big[ \frac x{\delta} \Big] +\Big\{ \frac{x}{\delta}\Big\} \Big).
$$
Following \cite{CDG18}, define 
\[
 \Xi_{\delta} ;= \Big\{ \xi \in{\mathbb Z}^d : \  \delta(\xi +Y) \subset D\Big\} \quad
\widehat{D}_{\delta}:= \text{interior} \Big\{ \bigcup_{\xi \in \Xi_{\delta}} \delta \big(\xi+\overline{Y}\big)
\Big\}, \quad \text{and}  \quad \Lambda_{\delta} :=D\setminus \widehat{D}_{\delta}.
\] 
The set $\widehat{D}_{\delta}$ is the largest union of $\delta \big(\xi +\overline{Y}\big)$ 
cells for  $\xi \in {\mathbb Z}^d$ included in $D$.

\begin{defi} For a measurable function $\phi$ on $\widehat{D}_{\delta}$,  the unfolding operator 
${\mathfrak T}_{\delta}$ is defined as 
\begin{equation} \label{unfold}
{\mathfrak T}_{\delta}(\phi)(x,y) =
\left\{
\begin{array}{cc}
\dis \phi\Big(\delta \Big[\frac{x}{\delta}\Big] +\delta y \Big),  &   
{\rm for} \ {\rm a.e.} \ \ (x,y) \in \widehat{D}_{\delta}\times Y,  \\
 \vspace*{-5pt} \\
0,  & {\rm for} \ {\rm a.e.}  \ \ (x,y) \in \Lambda_{\delta} \times Y.
\end{array}
\right.
\end{equation}
\end{defi}

Note that the unfolding operator ${\mathfrak T}_{\delta}$ satisfies the following properties 
for measurable functions $u$ and $w$:
$$
{\mathfrak T}_{\delta}(u w)={\mathfrak T}_{\delta}(u)  {\mathfrak T}_{\delta}(w), \qquad 
{\mathfrak T}_{\delta}\big(f(u)) =f({\mathfrak T}_{\delta}(v)),
$$
where $f$ is a continuous function on ${\mathbb R}$ with $f(0)=0$.
\begin{prop}  {\rm (\cite[Proposition 1.5]{CDG18})}
Let $f$ be a measurable function on $Y$ and extend it by $Y$-periodicity to the whole space $\rd$ as 
$f(x) =f(\{x\}), \  {\rm a.e.}  \  x\in \rd$. Then, defining the sequence $\{f_{\delta}\}$ by 
$$
f_{\delta}(x) =f\Big(\frac {x}{\delta}\Big), \quad {\rm a.e.} \ \ x\in \rd, 
$$
we have
$$
{\mathfrak T}_{\delta}(f_{\delta}|_D)(x,y)= \left\{
\begin{array}{ll}
f(y), & {\rm for \ \ a.e.} \ \ (x,y) \in \widehat{D}_{\delta}\times Y, \\
 \vspace*{-5pt} \\
0, & {\rm for \ \ a.e.} \ \ (x,y) \in \Lambda_{\delta}\times Y.
\end{array}
\right.
$$
Moreover, if $f\in L^p(Y) \ (1\le p<\infty)$,   ${\mathfrak T}_{\delta}(f_{\delta}|_D)$ converges 
to $f$ strongly in $L^p(D\times Y)$.
\end{prop}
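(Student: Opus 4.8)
The plan is to verify the two displayed formulas for $\mathfrak{T}_\delta(f_\delta|_D)$ directly from the definition \eqref{unfold}, and then to deduce the $L^p$-convergence by combining this identity with the strong $L^p$-convergence of the scaled functions $f_\delta$ already implicit in the setup together with a change-of-variables estimate. First I would fix $\delta>0$ and a point $x\in\widehat{D}_\delta$. By definition of the unfolding operator, $\mathfrak{T}_\delta(f_\delta|_D)(x,y) = f_\delta\bigl(\delta[x/\delta]+\delta y\bigr)$ for a.e. $y\in Y$. Now $f_\delta(z) = f(z/\delta) = f(\{z/\delta\})$ by $Y$-periodicity of $f$, so with $z = \delta[x/\delta]+\delta y$ we get $z/\delta = [x/\delta] + y$; since $[x/\delta]\in\mathbb{Z}^d$ and $y\in Y$, the integer part is $[z/\delta] = [x/\delta]$ and the fractional part is $\{z/\delta\} = y$. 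Hence $\mathfrak{T}_\delta(f_\delta|_D)(x,y) = f(y)$ for a.e. $(x,y)\in\widehat{D}_\delta\times Y$, and the value on $\Lambda_\delta\times Y$ is $0$ by the second line of \eqref{unfold}. This establishes the pointwise identity.

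For the convergence statement, assume $f\in L^p(Y)$ with $1\le p<\infty$. Using the identity just proved,
\[
\|\mathfrak{T}_\delta(f_\delta|_D) - f\|_{L^p(D\times Y)}^p
= \iint_{D\times Y}\bigl|\mathfrak{T}_\delta(f_\delta|_D)(x,y)-f(y)\bigr|^p\,dy\,dx.
\]
On $\widehat{D}_\delta\times Y$ the integrand vanishes identically since $\mathfrak{T}_\delta(f_\delta|_D)(x,y)=f(y)$ there; hence the integral reduces to $\iint_{\Lambda_\delta\times Y}|f(y)|^p\,dy\,dx = |\Lambda_\delta|\cdot\|f\|_{L^p(Y)}^p$. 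It therefore suffices to show $|\Lambda_\delta|\to 0$ as $\delta\to 0$. Since $D$ is bounded and open, $\widehat{D}_\delta$ exhausts $D$ in measure: every point of $D$ lies in some $\delta(\xi+Y)\subset D$ for $\delta$ small enough (using that $D$ is open, a fixed point has a neighborhood contained in $D$, and the mesh cells have diameter $\delta\sqrt{d}\to 0$), so $\mathbf 1_{\widehat{D}_\delta}\to\mathbf 1_D$ pointwise a.e., and dominated convergence gives $|\Lambda_\delta| = |D| - |\widehat{D}_\delta|\to 0$. This completes the argument.

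The only genuinely delicate point is the last one, namely the claim $|\Lambda_\delta|\to 0$, which is the quantitative content of ``$\widehat{D}_\delta$ is the largest union of $\delta$-cells inside $D$.'' Everything else is bookkeeping with the decomposition $x = [x]+\{x\}$ and the periodicity of $f$. Since this measure-exhaustion fact is standard in the periodic-unfolding literature (it is part of the basic properties of $\widehat{D}_\delta$ recorded in \cite{CDG18}), the proof is essentially a direct computation; I would present it in the two steps above — the pointwise identity, then the $L^p$-estimate reducing to $|\Lambda_\delta|\to 0$ — and invoke the standard property of $\widehat{D}_\delta$ for the last limit.
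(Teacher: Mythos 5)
Your proof is correct. The paper does not prove this proposition at all—it is quoted verbatim from \cite[Proposition 1.5]{CDG18}—and your two-step argument (the pointwise identity $\mathfrak{T}_{\delta}(f_{\delta}|_D)(x,y)=f(y)$ on $\widehat{D}_{\delta}\times Y$ via $[z/\delta]=[x/\delta]$, $\{z/\delta\}=y$ and periodicity, followed by the reduction of the $L^p$-error to $|\Lambda_{\delta}|\,\|f\|_{L^p(Y)}^p$ and the exhaustion argument $|\Lambda_{\delta}|\to 0$) is exactly the standard proof given in that reference, so nothing further is needed.
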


\begin{prop} {\rm (\cite[Proposition1.8]{CDG18})} Suppose $1\le p\le \infty.$ The operator 
${\mathfrak T}_{\delta}$ is a linear and continuous from $L^p(\widehat{D}_{\delta})$ into 
$L^p(D\times Y)$. For  $u \in L^1(\widehat{D}_{\delta})$,  $v \in L^1(D)$ and $w\in L^p(D)$, 
\begin{itemize}
\item[(i)] \   $\dis \iint_{D\times Y} {\mathfrak T}_{\delta}(u)(x,y)dxdy 
=\int_{\widehat{D}_{\delta}} u(x)dx;$
\item[(ii)] \  $\dis \Big| \int_D v(x)dx - \iint_{D\times Y} 
{\mathfrak T}_{\delta}(v)(x,y) dxdy \Big| \le  \int_{\Lambda_{\delta}} |v(x)|dx;$
\item[(iii)] \   $\dis \| {\mathfrak T}_{\delta}(w)\|_{L^p(D\times Y)} \le \|w\|_{L^p(D)}$.
\end{itemize}
\end{prop}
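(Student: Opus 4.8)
The plan is to reduce all of the asserted properties to one elementary change-of-variables identity, carried out cell by cell, from which (i) is essentially immediate and (ii), (iii) follow by short manipulations. Linearity of ${\mathfrak T}_{\delta}$ is clear directly from the definition \eqref{unfold}, so the substance lies in the three displayed claims and the continuity assertion.

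First I would record the basic fact underlying the unfolding operator. Since $D$ is bounded, $\Xi_{\delta}$ is a finite set and $\widehat{D}_{\delta}=\bigcup_{\xi\in\Xi_{\delta}}\delta(\xi+Y)$ up to a Lebesgue-null set (the grid of cell faces). If $x\in\delta(\xi+Y)$ for some $\xi\in\Xi_{\delta}$, then $[x/\delta]=\xi$, so for a.e.\ such $x$ and every $y\in Y$ one has ${\mathfrak T}_{\delta}(\phi)(x,y)=\phi(\delta\xi+\delta y)$; in particular this value does not depend on $x$ inside a fixed cell. Using this together with the vanishing of ${\mathfrak T}_{\delta}(u)$ on $\Lambda_{\delta}\times Y$, for $u\in L^1(\widehat{D}_{\delta})$ I would compute
\[
\iint_{D\times Y}{\mathfrak T}_{\delta}(u)(x,y)\,dx\,dy=\sum_{\xi\in\Xi_{\delta}}\int_{\delta(\xi+Y)}\Big(\int_Y u(\delta\xi+\delta y)\,dy\Big)dx=\delta^d\sum_{\xi\in\Xi_{\delta}}\int_Y u(\delta\xi+\delta y)\,dy,
\]
the last step because $|\delta(\xi+Y)|=\delta^d$ and the inner integral is $x$-independent. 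The affine substitution $z=\delta\xi+\delta y$ (so $dz=\delta^d\,dy$) turns each summand into $\delta^{-d}\int_{\delta(\xi+Y)}u(z)\,dz$, and summing over $\xi$ recovers $\int_{\widehat{D}_{\delta}}u(z)\,dz$. This proves (i), and applied to $|u|$ it also shows ${\mathfrak T}_{\delta}$ maps $L^1(\widehat{D}_{\delta})$ into $L^1(D\times Y)$.

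For (ii), apply (i) to $v|_{\widehat{D}_{\delta}}$ (note ${\mathfrak T}_{\delta}(v)$ depends only on $v|_{\widehat{D}_{\delta}}$) and use that $D=\widehat{D}_{\delta}\cup\Lambda_{\delta}$ is a disjoint (mod null) decomposition, so that
\[
\Big|\int_D v\,dx-\iint_{D\times Y}{\mathfrak T}_{\delta}(v)\,dx\,dy\Big|=\Big|\int_D v\,dx-\int_{\widehat{D}_{\delta}}v\,dx\Big|=\Big|\int_{\Lambda_{\delta}}v\,dx\Big|\le\int_{\Lambda_{\delta}}|v|\,dx.
\]
For (iii) with $1\le p<\infty$, I would repeat the cell-by-cell computation above with $|w|^p$ in place of $u$ — equivalently, invoke the composition rule ${\mathfrak T}_{\delta}(|w|^p)=|{\mathfrak T}_{\delta}(w)|^p$ recorded after the definition, since $t\mapsto|t|^p$ is continuous and vanishes at $0$ — to obtain $\|{\mathfrak T}_{\delta}(w)\|_{L^p(D\times Y)}^p=\int_{\widehat{D}_{\delta}}|w|^p\,dx$, which is $\le\|w\|_{L^p(D)}^p$ and equals $\|w\|_{L^p(\widehat{D}_{\delta})}^p$ when $w$ is supported in $\widehat{D}_{\delta}$ (so ${\mathfrak T}_{\delta}$ is in fact an isometry into $L^p(D\times Y)$). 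The case $p=\infty$ is immediate: by \eqref{unfold} every value of ${\mathfrak T}_{\delta}(w)$ equals either $0$ or a value taken by $w$, whence $\|{\mathfrak T}_{\delta}(w)\|_{L^\infty(D\times Y)}\le\|w\|_{L^\infty(D)}$; in all cases the bound in (iii) yields continuity of the linear map ${\mathfrak T}_{\delta}$. I do not expect a genuine obstacle here — the only points requiring care are the measure-theoretic bookkeeping of the cell decomposition (the null set of cell faces, and the contribution of $\Lambda_{\delta}$) and treating $p=\infty$ separately from the integral estimates.
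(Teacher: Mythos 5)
Your proposal is correct: the cell-by-cell change of variables giving the exact integration formula (i), the $\Lambda_{\delta}$-remainder estimate (ii), and the $L^p$ bound via ${\mathfrak T}_{\delta}(|w|^p)=|{\mathfrak T}_{\delta}(w)|^p$ (with the separate, trivial $p=\infty$ case) are all sound, and the measure-theoretic points you flag (null cell faces, behaviour of null sets under the unfolding map) are indeed the only places needing care. The paper itself gives no proof, quoting the result from \cite[Proposition 1.8]{CDG18}; your argument is essentially the standard one from that source, so there is nothing to add.
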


\begin{prop} {\rm (\cite[Proposition1.9]{CDG18})} \label{propconverge}
Suppose $1\le p<\infty.$ 
\begin{itemize}
\item[(i)] \  For $u\in L^p(D)$,   ${\mathfrak T}_{\delta}(u)$  converges  to $u$  
{\it strongly  in}  $ L^p(D\times Y);$
\item[(ii)] \  Let $\{w_{\delta}\}$ be a sequence in $L^p(D)$ such that 
$w_{\delta}$ converges to $w$ strongly in $L^p(D)$. Then 
${\mathfrak T}_{\delta}(w_{\delta})$  converges  to $w$  
{\it strongly  in}  $ L^p(D\times Y)$.
\end{itemize}
\end{prop}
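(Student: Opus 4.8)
The plan is to establish (i) first, by reducing to continuous functions via a density argument, and then to obtain (ii) as an immediate consequence of (i) combined with the $L^p$-contraction property of ${\mathfrak T}_\delta$ recorded in the preceding proposition.

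For (i), I would begin with the case $u\in C_0(D)$. Writing $x=\delta\big([x/\delta]+\{x/\delta\}\big)$ gives $\delta[x/\delta]+\delta y=x+\delta\big(y-\{x/\delta\}\big)$, so on $\widehat{D}_{\delta}\times Y$ the point at which $u$ is evaluated in \eqref{unfold} differs from $x$ by at most $\delta\sqrt{d}$. Since a function in $C_0(D)$ is uniformly continuous, this yields $\|{\mathfrak T}_{\delta}(u)-u\|_{L^{\infty}(\widehat{D}_{\delta}\times Y)}\to 0$, while on $\Lambda_{\delta}\times Y$ one has ${\mathfrak T}_{\delta}(u)=0$ and $|\Lambda_{\delta}|\to 0$. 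Splitting $\int_{D\times Y}|{\mathfrak T}_{\delta}(u)(x,y)-u(x)|^{p}\,dxdy$ into the integrals over $\widehat{D}_{\delta}\times Y$ and $\Lambda_{\delta}\times Y$ and using $|Y|=1$ then gives ${\mathfrak T}_{\delta}(u)\to u$ in $L^{p}(D\times Y)$.

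For general $u\in L^{p}(D)$, fix $\varepsilon>0$ and choose $\varphi\in C_{0}(D)$ with $\|u-\varphi\|_{L^{p}(D)}<\varepsilon$. By linearity of ${\mathfrak T}_{\delta}$,
\[
\|{\mathfrak T}_{\delta}(u)-u\|_{L^{p}(D\times Y)}\le \|{\mathfrak T}_{\delta}(u-\varphi)\|_{L^{p}(D\times Y)}+\|{\mathfrak T}_{\delta}(\varphi)-\varphi\|_{L^{p}(D\times Y)}+\|\varphi-u\|_{L^{p}(D\times Y)}.
\]
The first term is at most $\|u-\varphi\|_{L^{p}(D)}<\varepsilon$ by the contraction property (iii) of the preceding proposition; the last term equals $\|\varphi-u\|_{L^{p}(D)}<\varepsilon$ since it involves no dependence on $y$ and $|Y|=1$; and the middle term tends to $0$ by the continuous case just treated. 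Letting $\delta\to 0$ and then $\varepsilon\to 0$ proves (i). For (ii), again using linearity and the contraction property,
\[
\|{\mathfrak T}_{\delta}(w_{\delta})-w\|_{L^{p}(D\times Y)}\le \|{\mathfrak T}_{\delta}(w_{\delta}-w)\|_{L^{p}(D\times Y)}+\|{\mathfrak T}_{\delta}(w)-w\|_{L^{p}(D\times Y)}\le \|w_{\delta}-w\|_{L^{p}(D)}+\|{\mathfrak T}_{\delta}(w)-w\|_{L^{p}(D\times Y)},
\]
and both terms on the right tend to $0$ as $\delta\to 0$, the first by hypothesis and the second by (i).

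The only genuinely delicate point I anticipate is the treatment of the boundary layer $\Lambda_{\delta}$ in the continuous case; this is handled by the fact that $|\Lambda_{\delta}|\to 0$ together with the boundedness (hence uniform integrability) of the approximating function $\varphi\in C_{0}(D)$. Everything else is routine bookkeeping with the contraction estimate and the identity $|Y|=1$.
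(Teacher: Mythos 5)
Your proof is correct; the paper states this proposition only as a citation of \cite[Proposition 1.9]{CDG18} without giving a proof, and your argument (uniform continuity for $\varphi\in C_0(D)$ giving $\|{\mathfrak T}_{\delta}(\varphi)-\varphi\|_{L^\infty(\widehat{D}_{\delta}\times Y)}\to 0$, the vanishing boundary layer $\Lambda_{\delta}$, then density plus the contraction property (iii) and the triangle inequality for (ii)) is exactly the standard argument used in that reference. The only step worth an explicit line is that $|\Lambda_{\delta}|\to 0$ as $\delta\to 0$: this holds for any bounded open $D$ because every $x\in D$ lies, for all sufficiently small $\delta$, in a cell $\delta(\xi+\overline{Y})\subset D$, so $\chi_{\Lambda_{\delta}}\to 0$ pointwise on $D$ and dominated convergence applies.
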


\begin{prop} {\rm (\cite[Proposition1.12]{CDG18})} Let $1<p<\infty.$
Suppose $\{w_{\delta}\}_{\delta>0}$  is a bounded sequence in $L^p(D)$.
Then the sequence $\{{\mathfrak T}_{\delta}(w_{\delta})\}_{\delta>0}$ is also bounded 
in $L^p(D\times Y)$. 
Furthermore, the following hold.
\begin{itemize}
\item[(i)]  If ${\mathfrak T}_{\delta}(w_{\delta})$ converges to some ${\bar w}$ weakly in 
$L^p(D\times Y)$, then $w_{\delta}$ converges to ${\mathcal M}_Y({\bar w})$  weakly in $L^p(D)$, 
where 
$$
{\mathcal M}_Y({\bar w})(x)=\int_Y {\bar w}(x,y)dy, \quad x\in D
$$
denotes the mean value operator on $L^p(D \times Y)$ into $L^p(D)$. 
\item[(ii)]  If $\{w_{\delta}\}$ converges to some $w$ weakly in $L^p(D)$, then up to a 
subsequence $\{\delta'\}$,  there exists $\hat{w}\in L^p(D\times Y)$ with 
${\mathcal M}_Y(\hat{w})=0$ such that ${\mathfrak T}_{\delta'}(w_{\delta'}) 
$ converges to $w+\hat{w}$ weakly in $L^p(D\times Y)$ and 
$$
\|w+\hat{w}\|_{L^p(D\times Y)} \le \liminf_{\delta' \to 0} \|w_{\delta'}\|_{L^p(D)}.
$$
\end{itemize}
\end{prop}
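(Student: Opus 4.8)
The plan is to deduce everything from the elementary properties of the unfolding operator recorded above: the contraction estimate $\|{\mathfrak T}_{\delta}(w)\|_{L^p(D\times Y)}\le \|w\|_{L^p(D)}$, the ``defect'' bound $\bigl|\int_D v\,dx-\iint_{D\times Y}{\mathfrak T}_{\delta}(v)\,dx\,dy\bigr|\le\int_{\Lambda_{\delta}}|v|\,dx$ for $v\in L^1(D)$, the multiplicativity ${\mathfrak T}_{\delta}(uw)={\mathfrak T}_{\delta}(u){\mathfrak T}_{\delta}(w)$, and the strong convergence ${\mathfrak T}_{\delta}(u)\to u$ in $L^q(D\times Y)$ for $u\in L^q(D)$ (Proposition \ref{propconverge}(i), valid for any $q\in[1,\infty)$), together with reflexivity of $L^p$ for $1<p<\infty$. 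The boundedness assertion is immediate: the contraction estimate gives $\|{\mathfrak T}_{\delta}(w_{\delta})\|_{L^p(D\times Y)}\le\|w_{\delta}\|_{L^p(D)}$, which is bounded by hypothesis.

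For (i), assume ${\mathfrak T}_{\delta}(w_{\delta})\rightharpoonup\bar w$ weakly in $L^p(D\times Y)$ and fix a test function $\varphi\in L^{p'}(D)$, $1/p+1/p'=1$. I would apply the defect bound to $v=w_{\delta}\varphi\in L^1(D)$ and use multiplicativity to write
\[
\Bigl|\int_D w_{\delta}\varphi\,dx-\iint_{D\times Y}{\mathfrak T}_{\delta}(w_{\delta})\,{\mathfrak T}_{\delta}(\varphi)\,dx\,dy\Bigr|\le\int_{\Lambda_{\delta}}|w_{\delta}|\,|\varphi|\,dx\le\|w_{\delta}\|_{L^p(D)}\,\|\varphi\|_{L^{p'}(\Lambda_{\delta})}.
\]
Since every point of the open set $D$ eventually lies in a $\delta$-cell contained in $D$, one has $\mathbf 1_{\Lambda_{\delta}}\to 0$ a.e.\ on $D$, so $\|\varphi\|_{L^{p'}(\Lambda_{\delta})}\to 0$ by dominated convergence; combined with the uniform bound on $\|w_{\delta}\|_{L^p(D)}$ the right-hand side tends to $0$. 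Meanwhile ${\mathfrak T}_{\delta}(\varphi)\to\varphi$ strongly in $L^{p'}(D\times Y)$ (Proposition \ref{propconverge}(i)) while ${\mathfrak T}_{\delta}(w_{\delta})\rightharpoonup\bar w$ weakly in $L^p(D\times Y)$, so the weak--strong pairing passes to the limit:
\[
\iint_{D\times Y}{\mathfrak T}_{\delta}(w_{\delta})\,{\mathfrak T}_{\delta}(\varphi)\,dx\,dy\longrightarrow\iint_{D\times Y}\bar w(x,y)\varphi(x)\,dx\,dy=\int_D{\mathcal M}_Y(\bar w)(x)\,\varphi(x)\,dx.
\]
The two displays give $\int_D w_{\delta}\varphi\,dx\to\int_D{\mathcal M}_Y(\bar w)\,\varphi\,dx$ for every $\varphi\in L^{p'}(D)$, i.e.\ $w_{\delta}\rightharpoonup{\mathcal M}_Y(\bar w)$ in $L^p(D)$.

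For (ii), assume $w_{\delta}\rightharpoonup w$ weakly in $L^p(D)$. By the boundedness just shown and reflexivity of $L^p(D\times Y)$, pass to a subsequence $\{\delta'\}$ along which ${\mathfrak T}_{\delta'}(w_{\delta'})\rightharpoonup W$ weakly in $L^p(D\times Y)$. Applying part (i) along this subsequence gives $w_{\delta'}\rightharpoonup{\mathcal M}_Y(W)$ in $L^p(D)$, so uniqueness of weak limits forces ${\mathcal M}_Y(W)=w$. Put $\hat w:=W-w$ (with $w$ viewed as a function on $D\times Y$ independent of $y$); then ${\mathcal M}_Y(\hat w)={\mathcal M}_Y(W)-w=0$ and ${\mathfrak T}_{\delta'}(w_{\delta'})\rightharpoonup w+\hat w$, and weak lower semicontinuity of the norm together with the contraction estimate yields
\[
\|w+\hat w\|_{L^p(D\times Y)}=\|W\|_{L^p(D\times Y)}\le\liminf_{\delta'\to 0}\|{\mathfrak T}_{\delta'}(w_{\delta'})\|_{L^p(D\times Y)}\le\liminf_{\delta'\to 0}\|w_{\delta'}\|_{L^p(D)}.
\]
The only point requiring any care is the boundary layer in (i): one must verify $\|\varphi\|_{L^{p'}(\Lambda_{\delta})}\to 0$, which I would obtain from $|\Lambda_{\delta}\cap D|\to 0$ via the pointwise vanishing of $\mathbf 1_{\Lambda_{\delta}}$ on $D$, so that no regularity hypothesis on $\partial D$ is needed; the remainder is a routine weak--strong convergence argument.
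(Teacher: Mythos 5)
Your argument is correct. The paper does not prove this proposition at all — it is quoted verbatim from \cite[Proposition 1.12]{CDG18} — and what you have written is essentially the standard proof from that reference, reconstructed from the operator properties listed just before it: the contraction bound gives boundedness, the defect estimate plus multiplicativity plus the weak--strong pairing with ${\mathfrak T}_{\delta}(\varphi)\to\varphi$ in $L^{p'}(D\times Y)$ gives (i), and boundedness, reflexivity, part (i), uniqueness of weak limits and weak lower semicontinuity of the norm give (ii). The one point needing care, the vanishing of the boundary layer, you handle correctly: for each fixed $x\in D$ all cells meeting $x$ lie in $D$ once $\delta\sqrt{d}$ is below the distance from $x$ to $\partial D$, so $\mathbf 1_{\Lambda_{\delta}}\to 0$ pointwise on $D$ and dominated convergence gives $\|\varphi\|_{L^{p'}(\Lambda_{\delta})}\to 0$ with no regularity assumption on $\partial D$.
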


Let $(B, \|\cdot\|_B)$ be a (real) Banach space.  By definition, $L^p(D;B)$ for $1\le p<\infty$ is the set of functions 
$u:D\to B$ which are measurable and satisfy $\int_D \|u(x)\|_B^p dx <\infty$.

\begin{prop} {\rm (\cite[Corollary 1.37]{CDG18})} Let $1<p<\infty.$  Let $\{w_{\delta}\}_{\delta>0}$ be a 
sequence in $W^{1,p}(D)$ such that $w_{\delta}$ converges to some $w$  weakly in $W^{1,p}(D)$.
Then 
$$
{\mathfrak T}_{\delta}(w_{\delta}) \quad {\it converges \ to} \  w \ 
{\it  weakly \  in}  \ L^p(D;W^{1,p}(Y)).
$$
Furthermore, if $w_{\delta}$ converges to $w$  strongly in $L^p(D)$, then 
$$
{\mathfrak T}_{\delta}(w_{\delta}) \quad {\it converges \ to} \  w \ 
{\it strongly \  in}  \ L^p(D;W^{1,p}(Y)).
$$
\end{prop}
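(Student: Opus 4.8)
The plan is to treat $\mathfrak{T}_{\delta}(w_\delta)$ and its $y$-gradient $\nabla_y\mathfrak{T}_{\delta}(w_\delta)$ separately in $L^p(D\times Y)$ and then recombine, using that a sequence converges weakly (resp.\ strongly) in $L^p(D;W^{1,p}(Y))$ precisely when the functions together with their $y$-gradients converge weakly (resp.\ strongly) in $L^p(D\times Y)$; indeed, $v\mapsto(v,\nabla_y v)$ maps $L^p(D;W^{1,p}(Y))$ isometrically onto a closed subspace of the reflexive space $L^p(D\times Y)\times L^p(D\times Y)^d$. The main input is the scaling identity: for every $v\in W^{1,p}(D)$,
\[
\nabla_y\,\mathfrak{T}_{\delta}(v)=\delta\,\mathfrak{T}_{\delta}(\nabla v)\qquad\text{a.e. on }D\times Y
\]
(see \cite{CDG18}); on each cell $\delta(\xi+Y)\subset D$, $\xi\in\Xi_{\delta}$, this is merely the chain rule for Sobolev functions under the affine dilation $y\mapsto\delta\xi+\delta y$, and it extends to all of $\widehat{D}_{\delta}$ by the linearity and $L^p$-continuity of $\mathfrak{T}_{\delta}$ (\cite[Proposition 1.8]{CDG18}). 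First I would observe that, since $w_\delta\rightharpoonup w$ in $W^{1,p}(D)$, the sequences $\{w_\delta\}$ and $\{\nabla w_\delta\}$ are bounded in $L^p(D)$, so $\{\mathfrak{T}_{\delta}(w_\delta)\}$ is bounded in $L^p(D\times Y)$ and
\[
\big\|\nabla_y\mathfrak{T}_{\delta}(w_\delta)\big\|_{L^p(D\times Y)}=\delta\,\big\|\mathfrak{T}_{\delta}(\nabla w_\delta)\big\|_{L^p(D\times Y)}\le\delta\,\|\nabla w_\delta\|_{L^p(D)},
\]
which tends to $0$ as $\delta\to0$; hence $\nabla_y\mathfrak{T}_{\delta}(w_\delta)\to0$ \emph{strongly} in $L^p(D\times Y)^d$.

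Next I would identify the limit of $\mathfrak{T}_{\delta}(w_\delta)$ in $L^p(D\times Y)$. Because $w_\delta$ converges weakly to $w$ in $L^p(D)$ (which follows from $w_\delta\rightharpoonup w$ in $W^{1,p}(D)$), \cite[Proposition 1.12]{CDG18} provides, along any subsequence, a further subsequence $\{\delta'\}$ and some $\hat w\in L^p(D\times Y)$ with $\mathcal{M}_Y(\hat w)=0$ such that $\mathfrak{T}_{\delta'}(w_{\delta'})\rightharpoonup w+\hat w$ weakly in $L^p(D\times Y)$. Taking distributional $y$-derivatives in this convergence and recalling $\nabla_y\mathfrak{T}_{\delta'}(w_{\delta'})\to0$, we get $\nabla_y(w+\hat w)=0$; since $Y$ is connected, $w+\hat w$ is independent of $y$, so $w+\hat w=\mathcal{M}_Y(w+\hat w)=w+\mathcal{M}_Y(\hat w)=w$, i.e.\ $\hat w=0$. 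As the limit does not depend on the extracted subsequence and $\{\mathfrak{T}_{\delta}(w_\delta)\}$ is bounded in the reflexive space $L^p(D\times Y)$, the whole family $\mathfrak{T}_{\delta}(w_\delta)$ converges weakly to $w$ there.

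Combining the two steps, $\mathfrak{T}_{\delta}(w_\delta)\rightharpoonup w$ and $\nabla_y\mathfrak{T}_{\delta}(w_\delta)\rightharpoonup 0=\nabla_y w$ in $L^p(D\times Y)$, which by the characterization recalled above gives $\mathfrak{T}_{\delta}(w_\delta)\rightharpoonup w$ weakly in $L^p(D;W^{1,p}(Y))$; here $w$ denotes its weak $W^{1,p}(D)$-limit, regarded on the right as the $y$-independent function $(x,y)\mapsto w(x)$. This is the first assertion. If moreover $w_\delta\to w$ strongly in $L^p(D)$, then Proposition~\ref{propconverge}(ii) yields $\mathfrak{T}_{\delta}(w_\delta)\to w$ strongly in $L^p(D\times Y)$, while the first step already gives $\nabla_y\mathfrak{T}_{\delta}(w_\delta)\to0=\nabla_y w$ strongly in $L^p(D\times Y)^d$; adding the two contributions shows $\mathfrak{T}_{\delta}(w_\delta)\to w$ in the norm of $L^p(D;W^{1,p}(Y))$, which is the second assertion.

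The only genuinely technical point is the scaling identity $\nabla_y\mathfrak{T}_{\delta}(v)=\delta\,\mathfrak{T}_{\delta}(\nabla v)$ for merely weakly differentiable $v$: one must check that the unfolded function is weakly differentiable in $y$ with exactly that derivative, which is carried out cell by cell through the Sobolev chain rule and then globalized by density and the $L^p$-continuity of $\mathfrak{T}_{\delta}$. Everything else is a soft assembly of the mapping properties of $\mathfrak{T}_{\delta}$ recalled above (\cite[Propositions 1.8 and 1.12]{CDG18} and Proposition~\ref{propconverge}) together with a standard subsequence argument.
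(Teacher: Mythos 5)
This proposition is quoted by the paper directly from \cite[Corollary~1.37]{CDG18}; the paper contains no proof of its own, so there is nothing internal to compare against. Your argument is correct and is essentially the standard unfolding proof of that corollary: the scaling identity $\nabla_y\mathfrak{T}_{\delta}(v)=\delta\,\mathfrak{T}_{\delta}(\nabla v)$ on $\widehat{D}_{\delta}\times Y$ together with $\|\mathfrak{T}_{\delta}(\nabla w_{\delta})\|_{L^p(D\times Y)}\le\|\nabla w_{\delta}\|_{L^p(D)}$ forces the $y$-gradient to vanish strongly, the quoted Proposition on bounded sequences (\cite[Proposition~1.12]{CDG18}) combined with the vanishing distributional $y$-derivative identifies $\hat w=0$ along every subsequence, Proposition~\ref{propconverge}(ii) upgrades to the strong statement, and your closed-subspace characterization of weak and strong convergence in $L^p(D;W^{1,p}(Y))$ via $v\mapsto(v,\nabla_y v)$ is sound. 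A small virtue of your route: by going through the subsequence argument rather than a compact embedding you need no boundary regularity of $D$, consistent with the paper's standing assumption that $D$ is merely a bounded open set.
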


\begin{prop}  {\rm (\cite[Theorem 1.41]{CDG18})}  \label{prop:CDG18}
Suppose  $1<p<\infty$. Let $\{w_{\delta}\}$ 
be a sequence in $W^{1,p}(D)$ such that $w_{\delta}$ converges to some $w$ weakly in $W^{1,p}(D)$. 
Then for a subsequence, there exists some $\hat{w} \in L^p(D;{\mathcal W}^{1,p}_{\#}(Y))$ such that 
$$
{\mathfrak T}_{\delta}\Big(\frac{\partial w_{\delta}}{\partial x_i}\Big) \to \frac{\partial w}{\partial x_i}
 + \frac{\partial \hat{w}}{\partial y_i} \quad {\it weakly \ in} \ L^p(D\times Y), \ i=1,2,\ldots, d.
$$
\end{prop}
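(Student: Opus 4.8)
The plan is to run the classical ``unfolding criterion for gradients''. Pass throughout to a subsequence (not relabelled) along which all the weak limits below exist.

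\textbf{Steps 1--2 (extraction and the $y$-mean).} Since $w_{\delta}\rightharpoonup w$ in $W^{1,p}(D)$, the gradients $\{\nabla w_{\delta}\}$ are bounded in $L^{p}(D)^{d}$, so by the contraction property of ${\mathfrak T}_{\delta}$ recalled above (\cite[Proposition~1.8]{CDG18}) each $\{{\mathfrak T}_{\delta}(\partial w_{\delta}/\partial x_{i})\}$ is bounded in $L^{p}(D\times Y)$; extract so that ${\mathfrak T}_{\delta}(\partial w_{\delta}/\partial x_{i})\rightharpoonup\Theta_{i}$ weakly in $L^{p}(D\times Y)$ for $i=1,\dots,d$. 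By \cite[Proposition~1.12(i)]{CDG18} this forces $\partial w_{\delta}/\partial x_{i}\rightharpoonup{\mathcal M}_{Y}(\Theta_{i})$ in $L^{p}(D)$, while also $\partial w_{\delta}/\partial x_{i}\rightharpoonup\partial w/\partial x_{i}$; hence ${\mathcal M}_{Y}(\Theta_{i})=\partial w/\partial x_{i}$ a.e.\ in $D$. Put $\zeta_{i}:=\Theta_{i}-\partial w/\partial x_{i}\in L^{p}(D\times Y)$, so that $\zeta:=(\zeta_{1},\dots,\zeta_{d})$ has zero $Y$-mean. It remains to identify $\zeta(x,\cdot)$, for a.e.\ $x$, as the $y$-gradient of a $Y$-periodic function.

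\textbf{Step 3 (the crux): orthogonality to divergence-free periodic fields.} I claim that for every $\eta\in C_{0}^{\infty}(D)$ and every $\psi=(\psi_{1},\dots,\psi_{d})\in C_{\#}^{\infty}(Y)^{d}$ with $\operatorname{div}_{y}\psi=0$ and ${\mathcal M}_{Y}(\psi)=0$ one has $\iint_{D\times Y}\zeta\cdot\eta\,\psi\,dx\,dy=0$. Since ${\mathcal M}_{Y}(\psi)=0$, the $\nabla w$ part drops out and the claim reduces to $\lim_{\delta\to0}\iint_{D\times Y}{\mathfrak T}_{\delta}(\nabla w_{\delta})\cdot\eta\,\psi\,dx\,dy=0$. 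Using ${\mathfrak T}_{\delta}(\partial w_{\delta}/\partial x_{i})=\delta^{-1}\partial_{y_{i}}{\mathfrak T}_{\delta}(w_{\delta})$ and integrating by parts in $y$ over $Y$, the hypothesis $\operatorname{div}_{y}\psi=0$ annihilates the interior term and leaves $\delta^{-1}\int_{D}\eta(x)\int_{\partial Y}{\mathfrak T}_{\delta}(w_{\delta})(x,y)\,\psi(y)\cdot n(y)\,d\sigma(y)\,dx$. Now I would use the $Y$-periodicity of $\psi$ together with the fact that, because $w_{\delta}$ is single-valued on $D$, the trace of ${\mathfrak T}_{\delta}(w_{\delta})(x,\cdot)$ on the face $\{y_{i}=1\}$ equals the trace of ${\mathfrak T}_{\delta}(w_{\delta})(x+\delta{\bm e}_{i},\cdot)$ on $\{y_{i}=0\}$ (legitimate away from $\Lambda_{\delta}$, hence on $\operatorname{supp}\eta$ for small $\delta$). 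This rewrites the boundary integral as $\sum_{i}$ of difference quotients $\delta^{-1}\bigl({\mathfrak T}_{\delta}(w_{\delta})(x+\delta{\bm e}_{i},\cdot)-{\mathfrak T}_{\delta}(w_{\delta})(x,\cdot)\bigr)$ paired with $\eta$ and with the restriction of $\psi_{i}$ to $\partial Y$; transferring the difference quotient onto $\eta$ by discrete summation by parts and letting $\delta\to0$ — one has $\delta^{-1}(\eta(\cdot-\delta{\bm e}_{i})-\eta)\to-\partial\eta/\partial x_{i}$ uniformly, while ${\mathfrak T}_{\delta}(w_{\delta})$, hence its traces on the faces of $Y$, converges weakly to $w$ by \cite[Corollary~1.37]{CDG18} — collapses everything to $\sum_{i}\bigl(\int_{D}(\partial w/\partial x_{i})\,\eta\bigr)\bigl(\int_{Y}\psi_{i}\,dy\bigr)$, which vanishes since ${\mathcal M}_{Y}(\psi)=0$.

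\textbf{Step 4 (de Rham on the torus and conclusion).} For a.e.\ $x$, $\zeta(x,\cdot)\in L^{p}(Y)^{d}$ has zero mean and, by Step 3, annihilates every smooth $Y$-periodic divergence-free field of zero mean; by the $L^{p}$ Helmholtz--de Rham decomposition on the torus this yields $\zeta(x,\cdot)=\nabla_{y}\hat w(x,\cdot)$ for some $\hat w(x,\cdot)\in W^{1,p}_{\#}(Y)$, unique up to an additive constant, i.e.\ a well-defined element of ${\mathcal W}^{1,p}_{\#}(Y)$. Representing $\hat w(x,\cdot)$ as the zero-mean periodic solution of $\Delta_{y}\hat w(x,\cdot)=\operatorname{div}_{y}\zeta(x,\cdot)$ exhibits it as a bounded linear image of $\zeta(x,\cdot)$, so $x\mapsto\hat w(x,\cdot)$ is strongly measurable with $\|\hat w(x,\cdot)\|_{W^{1,p}(Y)}\le C\,\|\zeta(x,\cdot)\|_{L^{p}(Y)^{d}}$; since $\zeta\in L^{p}(D\times Y)^{d}$, we get $\hat w\in L^{p}(D;{\mathcal W}^{1,p}_{\#}(Y))$. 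Unravelling the definition of $\zeta$ gives ${\mathfrak T}_{\delta}(\partial w_{\delta}/\partial x_{i})\rightharpoonup\partial w/\partial x_{i}+\partial\hat w/\partial y_{i}$ weakly in $L^{p}(D\times Y)$, which is the assertion.

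The main obstacle is Step 3: it is precisely the matching of ${\mathfrak T}_{\delta}(w_{\delta})$ across the common faces of adjacent unfolding cells — a reflection of $w_{\delta}$ being single-valued on $D$ — that survives the limit and is what forces the corrector $\hat w$ to be $Y$-periodic, rather than merely an element of $L^{p}(D;W^{1,p}(Y))$. Steps 1, 2 and 4 are soft; the only genuinely delicate bookkeeping is the boundary-term computation in Step 3 (tracking $\Lambda_{\delta}$, the traces on $\partial Y$, and the one-cell shift $x\mapsto x+\delta{\bm e}_{i}$).
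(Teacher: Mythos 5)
Your argument is essentially correct, but note that the paper itself does not prove this statement: it is quoted verbatim from \cite[Theorem 1.41]{CDG18}, so the only comparison available is with the proof given there. That proof is different in structure from yours: it works with the rescaled, mean-free unfolded sequence $Z_{\delta}=\delta^{-1}\bigl({\mathfrak T}_{\delta}(w_{\delta})-{\mathcal M}_{Y}({\mathfrak T}_{\delta}(w_{\delta}))\bigr)$, observes $\nabla_{y}Z_{\delta}={\mathfrak T}_{\delta}(\nabla w_{\delta})$, gets boundedness of $Z_{\delta}$ in $L^{p}(D;W^{1,p}(Y))$ from the Poincar\'e--Wirtinger inequality on $Y$, extracts $\hat{w}$ as a weak limit of $Z_{\delta}$ itself, and proves $Y$-periodicity of $\hat{w}$ directly by estimating the jump of traces across opposite faces via the one-cell shift $x\mapsto x+\delta{\bm e}_{i}$. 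You instead never construct a primitive at the $\delta$-level: you identify the weak limit $\Theta$ of ${\mathfrak T}_{\delta}(\nabla w_{\delta})$, peel off its $Y$-mean $\nabla w$ with Proposition 1.12(i), show by the same one-cell-shift/trace-matching device (applied in duality) that the fluctuation annihilates smooth periodic divergence-free fields, and then invoke the $L^{p}$ Helmholtz--de Rham decomposition on the torus plus the periodic Poisson solver to produce $\hat{w}$ measurably in $x$. This is the unfolding analogue of the Nguetseng--Allaire two-scale compactness proof; it is perfectly valid for $1<p<\infty$, makes transparent that periodicity of the corrector comes exactly from the matching of ${\mathfrak T}_{\delta}(w_{\delta})$ across adjacent cells, but it imports a nontrivial harmonic-analysis ingredient (de Rham/Riesz transforms on the torus and density of smooth divergence-free fields), whereas the book's proof is self-contained and yields $\hat{w}$ as an explicit weak limit, which is what one wants for corrector-type estimates. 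Two small points you should make explicit: in Step 3 the quantity that actually appears in the limit is the flux $\int_{\{y_{i}=0\}}\psi_{i}\,d\sigma$, and you need the one-line divergence-theorem remark that for a periodic divergence-free $\psi$ this flux equals $\int_{Y}\psi_{i}\,dy$ (hence vanishes when ${\mathcal M}_{Y}(\psi)=0$); and in Step 4 the a.e.-$x$ statement should be obtained by testing with a countable dense family of admissible $\psi$ before applying de Rham pointwise.
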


\section{Homogenization of the Dirichlet problem} \label{S:hom}

In this section, we prove the uniqueness of the limits and  consider the homogenized problem, which is satisfied 
by the limit $u_0$ obtained from the right after Theorem \ref{thm:exist}.

\begin{theorem} \label{thm1}
Assume {\bf (A1)$'$} and {\bf (A2)$'$} hold for the functions $A^\delta(x)=(a_{ij}^\delta(x))$,  
$B^\delta(x)=(b_i^\delta(x))$, $C^\delta(x)=(c_i^\delta(x))$ and $k^\delta(x)$ for $\delta>0$.  Let $u^{\delta}_{\lambda}$ be the solution of the problem \eqref{apriori1} for $\delta>0, \ 
\lambda>\beta_0$ and $f_{\delta}\in H^{-1}(D)$.  Suppose that the following conditions also hold:
\begin{itemize}
\item $\tilde{A}^\delta:={\mathfrak T}_\delta A^\delta$ converges to some matrix-valued function  $A$ a.e. on $D\times Y$; 
\item $\tilde{B}^\delta:={\mathfrak T}_\delta B^\delta$ converges to some vector-valued function  $B$ a.e. on $D\times Y$; 
\item $\tilde{C}^\delta:={\mathfrak T}_\delta C^\delta$ converges to some vector-valued function  $C$ a.e. on $D\times Y$;
\item $\tilde{k}^\delta:={\mathfrak T}_\delta k^\delta$ converges to some function  $k$ a.e. on  $D\times Y$,  and
\item $f_{\delta} \ \ {\it converges  \ to} \ \ f \ \ {\it strongly \ in} \ H^{-1}(D)$.
\end{itemize}
Then there exist  $u_0 \in H^1_0(D)$ and 
$u_1 \in L^2(D; {\mathcal H}_{\#}(Y))$ such that, as $\delta \to 0$, 
\begin{align*}
{(i)}  &  \quad  u^{\delta}_{\lambda}  \ \ {\it converges  \ to} \ \ u_0   \ \ {\it weakly \ in } \ \ H^1_0(D);  \\ 
{(ii)} &  \quad  u^{\delta}_{\lambda}   \ \ {\it converges  \ to} \ \ u_0   \ \ {\it strongly \ in } \ \ L^2(D); \\
{(iii)} &  \quad  {\mathfrak T}_{\delta}(\nabla_x  u^{\delta}_{\lambda})   \ \ {\it converges  \ to} \ \ 
\nabla_x u_0  + \nabla_y  u_1 \ \ \it strongly \ in  \ \ \big(L^2(D\times Y)\big)^d,  
\end{align*}
and the pair $(u_0, u_1)$ is the unique solution of the problem 
\[
 \!\!\!\!\! \iint\limits_{D\times Y} A(x,y) \Big(\nabla_x u_0(x)+ \nabla_y u_1(x,y)\Big) \cdot 
\Big(\nabla_x \varphi(x) + \nabla_y \psi (x,y)\Big) dxdy  
\]
\begin{equation} \label{eff-1} 
\begin{split}
&\quad  + \iint\limits_{D\times Y} B(x, y) \cdot \Big(\nabla_x u_0(x)+ \nabla_y u_1(x,y)\Big) \varphi(x)dxdy \\
&\quad + \iint\limits_{D\times Y}  u_0(x) \, C(x, y) \cdot \Big( \nabla_x \varphi(x) + \nabla_{y}\psi(x,y) \Big) dxdy  \\ 
  &\quad +\iint\limits_{D\times Y} u_0(x) \varphi(x)  k(x,y)dxdy  +\lambda(u_0, \varphi) \\
&= \langle f, \varphi \rangle_{H^{-1}, H^1_0} 
\end{split}
\end{equation}
 for $ \varphi \in H^1_0(D),  \ \psi \in L^2(D;{\mathcal H}_{\#}(Y))$. 
Furthermore, the following also hold:
\begin{align}  \label{eff-2}
 \lim_{\delta\to 0} \form^{\delta}_{\lambda}(u^{\delta}_{\lambda},u^{\delta}_{\lambda}) 
& = \int\limits_D A^{\sf eff}(x) \nabla u_0(x) \cdot \nabla u_0(x) dx + \int\limits_D B^{\sf eff}(x) \cdot \nabla u_0(x)  \, u_0(x)dx 
  \\ \nonumber
& \quad  + \int\limits_D u_0(x) \, C^{\sf eff}(x) \cdot \nabla u_0(x)dx   + \int_D u_0(x)^2 k^{\sf eff}(x) dx 
 +\lambda (u_0, u_0)  \\ \nonumber 
&= \form^0 (u_0, u_0)+{\lambda} (u_0, u_0)   =  \form^0_{\lambda} (u_0, u_0)
\end{align}
and  as $\delta \to 0$,  we have 
\begin{equation}  \label{eff-3}
{\mathfrak T}_{\delta}(\nabla u_{\lambda}^{\delta})(x,y) \ \ \mbox{converges to} \ \ 
\nabla_x u_0(x)+ \nabla_y u_1(x,y) \ \ {\it strongly} \ {\rm in} \ \big(L^2(D\times Y)\big)^d 
\end{equation}
Here the matrix-valued function $A^{\sf eff}(x)=(a^{\sf eff}_{ij}(x))$,  the vector-valued functions 
 $B^{\sf eff}(x)=(b_i^{\sf eff}(x))$ and $C^{\sf eff}(x)=(c^{\sf eff}_i(x))$ and the function $k^{\sf eff}(x)$ 
 are given  by
\begin{equation}\label{effcoef}
\begin{split}
&a_{ij}^{\sf eff}(x) = \int_Y \Big(a_{ij}(x, y) +\sum_{\ell=1}^d a_{i\ell}(x, y) \frac{\partial \omega_j}{\partial y_\ell}(x, y)\Big)
dy,  \\
& b_i^{\sf eff}(x)   = \int_Y \Big(b_i(x, y)+ \sum_{\ell=1}^d b_{l}(x, y)
\frac{\partial \omega_i}{\partial y_\ell}(x, y) \Big) dy, \\
&c_i^{\sf eff}(x)   = \int_Y\Big(\sum_{\ell=1}^d a_{i\ell}(x, y) \frac{\partial \omega_0}{\partial y_\ell}(x, y) +c_i(x, y)\Big)
dy, \\
&k^{\sf eff}(x) =\int_Y 
\Big(k(x, y)+ \sum_{\ell=1}^d b_\ell(x, y) \frac{\partial \omega_0}{\partial y_\ell}(x, y) \Big)dy,
\end{split}
\end{equation}
where $A(x,y)=(a_{ij}(x,y))$,  $B(x,y)=(b_i(x,y))$, $C(x,y)=(c_i(x,y))$ and $k(x,y)$ are the limit functions in the assumptions and 
$\{ \omega_i\}_{i=0}^d$ are the {\it correctors} defined in \eqref{effect} and \eqref{effect1}. 
\end{theorem}

\begin{remark}
Let us observe the following. 
\begin{itemize} 
\item[(1)]  Assumptions in Theorem \ref{thm1} imply that  \eqref{corre1} holds for $A$ and 
\eqref{corre2} holds for both $B$ and $C$.

\item[(2)] (periodic homogenization) When $A^\delta, B^\delta, C^\delta$ and $k^\delta$ are the forms 
\[
A^\delta(x)=A\Big(\frac x{\delta}\Big),  \qquad
B^\delta(x)=B\Big(\frac x{\delta}\Big), \qquad
C^\delta(x)=C\Big(\frac x{\delta}\Big),  \qquad  
k^\delta(x)=k\Big(\frac x{\delta}\Big),
\] 
for $Y$-periodic functions $A, B, C$ and $k$,  then  the limits are given by 
\[
A(x,y)=A(y),  \qquad
B(x,y)=B(y),  \qquad
C(x,y)=C(y),  \qquad
k(x,y)=k(y),
\]
respectively.  When
\[
A^\delta(x)=A_1(x) A_2(x/\delta), \qquad 
C^\delta(x)=c_1(x)  C_2(x/\delta), 
\]
where $A_2$ and $C_2$ are $Y$-periodic functions satisfying that  
both $A_1$ and $A_2$ satisfy {\bf (A1)} and  $C_2$ satisfies {\bf (A2)},  
and the function $c_1$ belongs to $L^{\infty}_{\sf loc}(\real^d)$, then the limits are given by 
$$
A(x,y)=A_1(x) A_2(y), \qquad
C(x,y)=c_1(x)  C_2(y)
$$ 
so that $A(x,y)=A_1(x)A_2(y)$ satisfies  \eqref{corre1} and  $C(x,y)=c_1(x)C_2(y)$ satisfies \eqref{corre2} 
(c.f.,   \cite[Remark 3.6]{CDG18}).
\end{itemize}
\end{remark}

\begin{proof}[ Proof of Theorem \ref{thm1}]
The existence of $(u_0, u_1)$ satisfying {(i)-(iii)} follows 
from Theorem \ref{thm:exist}  and Proposition \ref{prop:CDG18} (up to some subsequence at this point).

\medskip
{\bf Step 1.}  \ We first reveal the characterization of 
the limit ``$\lim_{\delta\to 0} \form_{\lambda}^{\delta}(u_{\lambda}^{\delta}, \varphi)$'' in the expression \eqref{apriori1} 
(up to a subsequence) for each $\varphi \in H_0^1(D)$ by using unfolding operators.
\begin{align}  \label{weak1} \nonumber
 & \langle f, \varphi \rangle_{H^{-1}(D), H^1_0(D)}  = \lim_{\delta \to 0} \langle f_{\delta}, \varphi \rangle_{H^{-1}(D), H^1_0(D)}  \ 
 = \lim_{\delta \to 0} \form^{\delta}_{\lambda} (u^{\delta}_{\lambda}, \varphi)   \\  \nonumber
& = \lim_{\delta \to 0}  \Big(  \int_D A^{\delta}(x) \nabla u^{\delta}_{\lambda}(x) \cdot  \nabla \varphi(x)dx 
+ \int_D B^{\delta}(x) \cdot  \nabla u^{\delta}_{\lambda} \, \varphi(x)dx \\  \nonumber
 \end{align}
\[
\begin{split}
& \qquad + \int_D u^{\delta}_{\lambda}  C^{\delta}(x) \cdot  \nabla \varphi(x)dx 
+ \int_D u_\lambda^\delta(x) \varphi(x) k^\delta(x)dx  + \lambda (u_{\lambda}^{\delta} ,\varphi) \Big) \\  \nonumber
& = \lim\limits_{\delta \to 0} \Big( \iint\limits_{D\times Y} {\mathfrak T}_{\delta} \big(A^{\delta} \nabla u^{\delta}_{\lambda} 
\cdot  \nabla \varphi\big)(x,y) dxdy + \iint\limits_{D\times Y}  {\mathfrak T}_{\delta}
 \big( B^{\delta} \cdot  \nabla u^{\delta}_{\lambda} \, \varphi\big)(x,y)dxdy    \\ 
& \qquad + \iint\limits_{D\times Y}  {\mathfrak T}_{\delta} 
 \big( u^{\delta}_{\lambda} C^{\delta} \cdot  \nabla \varphi\big)(x,y)dxdy 
+   \iint\limits_{D\times Y} {\mathfrak T}_\delta \big(u_\lambda^\delta \varphi k^\delta \big) (x,y)dxdy
    + \lambda(u_{\lambda}^{\delta}, \varphi) \Big)   \\ \nonumber
 & = \lim\limits_{\delta \to 0} \Big( \iint\limits_{D\times Y} \tilde{A}^\delta(x, y)  {\mathfrak T}_{\delta}(\nabla u^{\delta}_{\lambda})(x,y) \cdot  {\mathfrak T}_{\delta}(\nabla \varphi)(x,y) dxdy  \\ \nonumber
 & \qquad + \iint\limits_{D\times Y}  \tilde{B}^\delta(x, y)  \cdot  {\mathfrak T}_{\delta}(\nabla u^{\delta}_{\lambda})(x,y) {\mathfrak T}_{\delta}(\varphi\big)(x,y)dxdy  \\ \nonumber
& \qquad + \iint\limits_{D\times Y}  {\mathfrak T}_{\delta}(u^{\delta}_{\lambda}\big)(x,y) \tilde{C}^\delta(x, y)  \cdot  {\mathfrak T}_{\delta}(\nabla \varphi)(x,y) \,dxdy   \\ \nonumber 
 & \qquad  + \iint\limits_{D\times Y}  {\mathfrak T}_{\delta}(u^{\delta}_{\lambda}\big)(x,y)  {\mathfrak T}_{\delta}(\varphi \big)(x,y) 
 \tilde{k}^\delta(x, y) dxdy \Big)  + \lambda (u_0,\varphi)  \\ \nonumber 
& = \lim_{\delta \to 0} \Big( {\sf (I)}_\delta +{\sf (II)}_\delta + {\sf (III)}_\delta  +{\sf (IV)}_\delta \Big) + \lambda (u_0,\varphi).
\end{split}
\]
First we consider the term ${\sf (I)}_\delta$.  Since $A(x)=(a_{ij}(x))$ is bounded,  $\tilde{A}^\delta(x,y)$ is bounded and 
converges a.e.  Then the facts that ${\mathfrak T}_{\delta}(\nabla \varphi)$ converges to $\nabla \varphi$ 
{\it strongly} in $L^2(D \times Y)$ and  ${\mathfrak T}_{\delta}(\nabla u^{\delta}_{\lambda})$ converges to 
$\nabla_x  u_0 + \nabla_y u_1$  {\it weakly}  in $\big(L^2(D\times Y)\big)^d$ by {(iii)} imply that ${\sf (I)}_\delta$ 
goes to 
$$
\iint_{D\times Y} A(x, y) \Big(\nabla_x u_0(x) +\nabla_y u_1(x,y)\Big) \cdot \nabla \varphi (x)dxdy.
$$
To consider the term ${\sf (II)}_\delta$, note first that $\tilde{b}_i^\delta(x,y) {\mathfrak T}_\delta( \varphi)(x,y)$ 
converges to $b_i(x,y) \varphi(x)$ {\it strongly} in  $L^2(D\times Y)$, where 
$\tilde{B}^\delta(x,y)=(\tilde{b}_i^\delta(x,y))_{1\le i \le d}$ and $B(x,y)=(b_i(x,y))_{1\le i \le d}$.  In fact,  since 
$\varphi \in C_0^{\infty}(D) \subset L^{2p_0/(p_0-2)}(D)$ and ${\mathfrak T}_{\delta}(\varphi)$ converges to
$\varphi$ {\it strongly} in $L^{2p_0/(p_0-2)}(D\times Y)$ by Proposition \ref{propconverge} {(i)},  the assumption on $\tilde{B}^\delta$ 
tells us that 
\begin{align*}
& \| \tilde{b}^\delta_i {\mathfrak T}_{\delta}(\varphi) -b_i \varphi \|_{L^2(D\times Y)} \\
  &\le   \| \tilde{b}^\delta_i ({\mathfrak T}_{\delta}(\varphi) - \varphi) \|_{L^2(D\times Y)}
 + \| (\tilde{b}^\delta_i -b_i) \varphi \|_{L^2(D\times Y)}  \\
 & \le \Big(\iint_{D\times Y} |\tilde{b}^\delta_i(x,y)|^{p_0} dxdy \Big)^{1/p_0} \Big(\iint_{D\times Y}
\big|{\mathfrak T}_{\delta}(\varphi)(x,y) - \varphi(x)\big|^{2p_0/(p_0-2)} dxdy \Big)^{(p_0-2)/(2p_0)} \\
& \quad + \Big(\iint_{D\times Y} |\tilde{b}^\delta_i(x,y)-b_i(x,y)|^{p_0} dxdy \Big)^{1/p_0} \Big(\iint_{D\times Y}
\big|\varphi(x)\big|^{2p_0/(p_0-2)} dxdy \Big)^{(p_0-2)/(2p_0)}  \\ 
& \le \sup_{\delta>0} \|b_i^\delta\|_{L^{p_0}(D)} \| {\mathfrak T}_{\delta}(\varphi)- \varphi\|_{L^{2p_0/(p_0-2)}(D\times Y)}
 + \|\tilde{b}_i^\delta-b_i\|_{L^{p_0}(D\times Y)} \|\varphi\|_{L^{2p_0/(p_0-2)}(D)} 
\end{align*}  
which tends to $0$ as $ \delta \to 0$. 
So the term ${\sf (II)}_\delta$ goes to 
$$
\iint_{D\times Y} B(x,y)\cdot  \Big(\nabla_x u_0(x)+\nabla_y u_1(x,y) \Big) \varphi(x)dxdy.
$$
Similarly, we find that the third term ${\sf (III)}_\delta$ goes to
$$
\iint_{D\times Y} u_0(x) C(x, y) \cdot  \nabla \varphi(x)dxdy.
$$
As for the fourth term ${\sf (IV)}_\delta$,  since ${\mathfrak T}_\delta(\varphi)(x,y)\tilde{k}^\delta(x,y) $ converges to 
$\varphi(x)k(x,y)$ {\it strongly} in $L^2(D\times Y)$ in as much  as the similar estimate of the second term and 
${\mathfrak T}_\delta(u_\lambda^\delta)(x,y)$ also converges to $u_0$ {\it strongly} in $L^2(D\times Y)$, we 
see that the term ${\sf (IV)}_\delta$ goes to 
$$
\iint_{D\times Y} u_0(x) \varphi(x) k(x,y)dydx.
$$
Hence, summing up the limits, we see that 
\begin{align} \label{limit1}
\nonumber 
& \!\!\! \iint\limits_{D\times Y} A(x,y) \Big(\nabla_x u_0(x) +\nabla_y u_1(x,y)\Big)  \cdot  \nabla \varphi (x)dxdy  \\ \nonumber
&\quad  + \iint\limits_{D\times Y} B(x,y) \cdot  \Big(\nabla_x u_0(x)+\nabla_y u_1(x,y) \Big) \varphi(x)dxdy \\ \nonumber
& \quad +\iint\limits_{D\times Y} u_0(x) C(x, y) \cdot  \nabla \varphi(x)dxdy 
+\iint\limits_{D\times Y} u_0(x) \varphi(x) k(x, y)dxdy  + \lambda (u_0, \varphi) \\
& = \langle f, \varphi \rangle_{H^{-1}, H^1_0}, \quad \varphi \in H_0^1(D).
\end{align}

\medskip
{\bf Step 2.} \  In this step, we show that the  pair $(u_0, u_1)$ is a unique solution of \eqref{eff-1} 
(and this then gives us the full sequences converge to the limits respectively in {(i)-(iii)}). 
To this end, we note here that the limits $A, B, C$, and $k$ satisfy the following because of the 
conditions {\bf (A1)$'$}, {\bf (A2)$'$} and the assumptions in the theorem:
\begin{itemize}
\item[{\bf (A1)$''$}] $\dis \alpha |\xi|^2 \le A(x,y) \xi \cdot \xi \le \beta |\xi|^2$ for 
$\xi \in \real^2, \ {\rm a.e.} \ (x,y)\in D\times Y$; \medskip
\item[{\bf (A2)$''$}] $B, C\in (L^{p_0}(D\times Y))^d$ and $k \in L^{p_0/2}(D\times Y)$. 
\end{itemize}
Now for any $\theta_1\in C_0^{\infty}(D)$ and $\theta_2 \in C_{\#}^{\infty}(Y)$, 
set 
$$
v_{\delta}(x):= \delta \theta_1(x) \theta_2\Big(\frac x{\delta}\Big), \quad x \in D.
$$  
Then $v_{\delta}$ belongs to $H_0^1(D)$ for $\delta>0$ and the sequence $\{v_{\delta}\}$ converges to $0$  
uniformly in $D$ as $\delta \to 0$.  Moreover, their gradients 
$$
\nabla v_{\delta}(x)= \delta \, \nabla \theta_1(x)  \theta_2 \Big(\frac x{\delta}\Big)
+ \theta_1(x) \big(\nabla \theta_2\big)\Big(\frac x{\delta}\Big)
$$
are bounded in $\big(L^2(D)\big)^d$.  Therefore $v_{\delta}$ converges to $0$ {\it weakly} in $H^1_0(D)$.
On the other hand, by the definition of the unfolding operators, we see that
$$
{\mathfrak T}_{\delta}\big( \nabla v_{\delta}\big)(x,y)
=\delta {\mathfrak T}_{\delta}\big( \nabla \theta_1\big)(x,y)  \theta_2 (y) +
{\mathfrak T}_{\delta}(\theta_1)(x,y) \nabla_y \theta_2(y)
$$
converges to  $\theta_1 (x)  \nabla_y \theta_2(y)$ {\it strongly} in  $\big(L^2(D\times Y)\big)^d$.  
Thus taking $v_{\delta}$ as a test functions in \eqref{apriori1} and passing $\delta \to 0$, we find that
\begin{align*}
0   &   = \lim_{\delta \to 0} \langle f_{\delta}, v_{\delta} \rangle  
\  = \lim_{\delta \to 0} \form^{\delta}_{\lambda}(u^{\delta}_{\lambda}, v_{\delta})  \\
& = \lim_{\delta \to 0} \Big(\int\limits_D A^{\delta}(x) \nabla u^{\delta}_{\lambda}(x)  \cdot  \nabla v_{\delta} (x)dx 
+ \int\limits_D B^{\delta}(x) \cdot  \nabla u^{\delta}_{\lambda}(x) v_{\delta}(x)dx  \\
& \qquad  + \int\limits_D u^{\delta}_{\lambda}(x) C^{\delta}(x)  \cdot  \nabla v_{\delta}(x)dx  
 + \int\limits_D u_\lambda^\delta(x) v_\delta(x) k^\delta(x)dx
+\lambda(u^{\delta}_{\lambda}, v_{\delta}) \Big)  \\
& = \lim_{\delta \to 0} \Big(\iint\limits_{D\times Y} {\mathfrak T}_{\delta}\Big(A^{\delta} \nabla u^{\delta}_{\lambda}
 \cdot  \nabla v_{\delta} \Big)(x,y) dxdy + \iint\limits_{D\times Y} 
  {\mathfrak T}_{\delta}\Big( B^{\delta} \cdot  \nabla u^{\delta}_{\lambda} v_{\delta}\Big)(x,y)dxdy \\
 &\qquad  + \iint\limits_{D\times Y} {\mathfrak T}_{\delta}\Big(u^{\delta}_{\lambda} C^{\delta} \cdot  \nabla v_{\delta}\Big)(x,y)dxdy  
 + \iint\limits_{D\times Y} {\mathfrak T}_{\delta}\Big(u_\lambda^\delta v_\delta k^\delta \Big) dxdy \Big) \\
& = \lim_{\delta \to 0} \Big(\iint\limits_{D\times Y} \tilde{A}^\delta(x, y) {\mathfrak T}_{\delta} \big(\nabla u^{\delta}_{\lambda}\big)(x,y) 
\cdot \Big( \delta {\mathfrak T}_{\delta}\big(\nabla_x \theta_1\big)(x,y) \, \theta_2(y)  +  {\mathfrak T}_{\delta}(\theta_1)(x,y) \nabla_y \theta_2(y)\Big)  dxdy  \\
& \qquad   + \delta \iint\limits_{D\times Y} \tilde{B}^\delta(x, y) 
\cdot {\mathfrak T}_{\delta}(\nabla u^{\delta}_{\lambda})(x,y) \, {\mathfrak T}_{\delta}(\theta_1)(x,y) \theta_2(y) dxdy   \\ 
& \qquad  + \iint\limits_{D\times Y}  {\mathfrak T}_{\delta}(u^{\delta}_{\lambda})(x,y) \tilde{C}^\delta(x, y) \cdot  
\Big( \delta {\mathfrak T}_{\delta}(\nabla \theta_1)(x,y) \theta_2(y)  +  {\mathfrak T}_{\delta}(\theta_1)(x,y) 
\nabla_y \theta_2(y) \Big) dxdy  \\ 
& \qquad  + \delta \iint\limits_{D\times Y}  {\mathfrak T}_{\delta}(u^{\delta}_{\lambda})(x,y)
{\mathfrak T}_{\delta}(\theta_1)(x,y) \theta_2(y) \tilde{k}^\delta(x, y)dxdy \Big) \\
& = \iint\limits_{D\times Y} A(x,y) \Big(\nabla_x u_0(x) +\nabla_y u_1(x,y)\Big) \cdot \nabla_y \theta_2(y) \theta_1(x)dxdy \\
& \qquad + \iint\limits_{D\times Y} u_0(x) C(x,y)\cdot   \nabla_y \theta_2(y) \theta_1(x) dxdy. 
\end{align*}
So, since $C_0^{\infty}(D)\times C_{\#}^{\infty}(Y)$ is dense in $L^2(D;{\mathcal H}_{\#}(Y))$, it follows that, 
for $\psi \in L^2(D;{\mathcal H}_{\#}(Y))$, 
\[
 \iint\limits_{D\times Y} A(x,y)\Big(\nabla u_0(x)+\nabla_y u_1(x,y)\Big)\cdot \nabla_y \psi(x,y) dxdy  +\iint\limits_{D\times Y} u_0(x) C(x,y) \cdot \nabla_y \psi(x,y) dxdy=0 
\]
and, summing up this with \eqref{limit1}, we find that $(u_0, u_1)$ is a solution of \eqref{eff-1}.
Its uniqueness property is then proved by using the standard 
variational form in the space
$$
{\mathfrak H}:= H_0^1(D) \times L^2(D;{\mathcal H}_{\#}(Y))
$$
endowed with the norm 
$$
\|(\varphi, \psi)\|_{\mathfrak H}:= \Big( \int_D \nabla \varphi(x) \cdot \nabla \varphi(x)dx 
+ \int_D  \Big(\int_Y \nabla_y \psi(x,y) \cdot  \nabla_y \psi(x,y) dy\Big) dx \Big)^{1/2}.
$$
Then,  we solve the following variational problem:
Find $(u_0,u_1) \in {\mathfrak H}$ such that 
\begin{align}  \label{vi}  \nonumber 
& \hspace*{-0.5cm}
\iint\limits_{D\times Y} A(x,y)\Big(\nabla_x u_0(x)+ \nabla_y u_1(x,y) \Big)\cdot 
\Big(\nabla \varphi(x)+\nabla_y \psi(x,y)\Big) dxdy \\ \nonumber 
&  + \iint\limits_{D\times Y} B(x, y) \cdot \Big(\nabla_x u_0(x)  + \nabla_y u_1(x,y) \Big)\varphi(x)dxdy  \\ \nonumber
& + \iint\limits_{D\times Y} u_0(x) C(x, y) \cdot \Big(\nabla_x \varphi(x)+\nabla_y \psi(x,y)\Big) dxdy   \\ 
&   + \iint\limits_{D\times Y} u_0(x)\varphi(x) k(x, y)dxdy + \lambda (u_0, \varphi) \  =\langle f, \varphi \rangle 
\end{align}
holds for all $(\varphi,\psi) \in {\mathfrak H}$.

To this end,  define a bilinear functional on ${\mathfrak H} \times {\mathfrak H}$:
\begin{align*}
& \!\!\!\!\! \eta((\varphi_0, \psi_0),(\varphi_1, \psi_1)) \\
 & :=\iint\limits_{D\times Y} A(x,y)\big(\nabla_x \varphi_0(x)+ \nabla_y \psi_0(x,y)\big)\cdot 
\big(\nabla_x \varphi_1(x)+ \nabla_y \psi_1(x,y)\big) dxdy \\
& \qquad + \iint\limits_{D\times Y} B(x, y)\cdot \Big(\nabla_x \varphi_0(x)+ \nabla_y \psi_0(x,y) \Big) 
\varphi_1(x)dxdy  \\
& \qquad + \iint\limits_{D\times Y} \varphi_0(x) C(x, y)\cdot \Big(\nabla_x \varphi_1(x)+\nabla_y \psi_1(x,y)\Big)dxdy \\
& \qquad +  \iint\limits_{D\times Y} \varphi_0(x)  \varphi_1(x) k(x, y) dxdy   +\lambda (\varphi_0, \varphi_1) 
\end{align*}
which satisfies  the estimate
\begin{align*}
&  \big|\eta((\varphi_0, \psi_0),(\varphi_1, \psi_1))\big|  \\
& \le \beta \big( \|\nabla_x \varphi_0\|_{L^2(D)} + \|\nabla_y\psi_0\|_{L^2(D\times Y)} \big)
\big( \|\nabla_x \varphi_1\|_{L^2(D)} + \|\nabla_y\psi_1\|_{L^2(D\times Y)} \big) \\
& \ \  +  \big(\sum_{i=1}^d \|b_i\|_{L^{p_0}(D\times Y)}\big) \|\varphi_1\|_{L^2(D)}
\Big( \|\nabla_x \varphi_0\|_{L^2(D)}  + \|\nabla_y \psi_0\|_{L^2(D\times Y)} \Big)   \\
&  \ \ + \big(\sum_{i=1}^d \|c_i\|_{L^{p_0(D\times Y)}} \big) \|\varphi_0\|_{L^2(D)} 
\big(\| \nabla_x \varphi_1\|_{L^2(D)} + \|\nabla_y\psi_0\|_{L^2(D\times Y)} \big)  \\
& \ \  + \| k\|_{L^{p_0/2}(D\times Y)}  \| \varphi_0\|_{L^2(D)}\| \varphi_1\|_{L^2(D)}
 + \lambda \|\varphi_0\|_{L^2(D)} \|\varphi_1\|_{L^2(D)}   \\
& \le \Big(\beta +  c_1\sum_{i=1}^d \big(\|b_i\|_{L^{p_0}(D\times Y)}+\|c_i\|_{L^{p_0}(D\times Y)}\big) 
+  c_1 \| k\|_{L^{p_0/2}(D\times Y)} + c_1\lambda \Big) \\
&\qquad \Big( \|(\varphi_0,\psi_0)\|_{\mathfrak H} \times \|(\varphi_1,\psi_1)\|_{\mathfrak H} \Big)
\end{align*}
for some constant $c_1>0$ by {\bf (A1)$''$} and {\bf (A2)$''$}. 
This shows that $\eta$ is bounded on $\mathfrak H$.   
Moreover  by using Lemma  \ref{lem:est1} and the inequality \eqref{comparison},  we find some constant 
$c_2>0$ such that 
\begin{align*}
&\!\!\!\!\! \eta((\varphi_0, \psi_0),(\varphi_0, \psi_0)) \\
&  =\iint_{D\times Y} A(x,y)\big(\nabla_x \varphi_0(x)+ \nabla_y \psi_0(x,y)\big) \cdot 
\big(\nabla_x \varphi_0(x) + \nabla_y \psi_0(x,y)\big) dxdy \\
& \qquad + \iint\limits_{D\times Y} B(x, y)\cdot \big(\nabla_x \varphi_0(x)+ \nabla_y \psi_0(x,y) \big)\varphi_0(x) dxdy  \\
& \qquad   + \iint\limits_{D\times Y} \varphi_0(x) C(x, y) \cdot \big( \nabla_x \varphi_0(x) +\nabla_y \psi_0(x,y) \big) dxdy  \\
& \qquad +  \iint\limits_{D\times Y} \varphi_0(x)^2 k(x, y)dxdy + \lambda (\varphi_0, \varphi_0) \\
& \ge \alpha \iint_{D\times Y} | \nabla_x \varphi_0(x)+\nabla_y \psi_0(x,y)|^2 dxdy \\
& \qquad - \Big|\iint\limits_{D\times Y} 
B(x, y)\cdot \big(\nabla_x \varphi_0(x)+ \nabla_y \psi_0(x,y) \big) \varphi_0(x)dxdy \Big|   \\
& \qquad - \Big| \iint\limits_{D\times Y} \varphi_0(x) C(x, y) \cdot \big(\nabla_x \varphi_0(x) 
+ \nabla_y \psi_0(x,y) \big) dxdy \Big|  \\ 
& \qquad + \iint\limits_{D\times Y} \varphi_0(x)^2 k(x, y)dxdy + \lambda (\varphi_0, \varphi_0)   \\ 
&  = \alpha \Big( \int_D\nabla \varphi_0(x) \cdot \nabla \varphi_0(x) dx + \iint_{D\times Y} \nabla_y \psi_0(x,y) \cdot 
\nabla_y \psi_0(x,y) dxdy\Big) \\ 
& \qquad - \Big|\iint\limits_{D\times Y} B(x, y)\cdot \big(\nabla_x \varphi_0(x) 
+ \nabla_y \psi_0(x,y) \big)\varphi_0(x) dxdy\Big|  \\
& \qquad - \Big| \iint\limits_{D\times Y} \varphi_0(x) 
C(x, y) \cdot \big(\nabla_x \varphi_0(x)+ \nabla_y \psi_0(x,y) \big) dxdy \Big|   \\
& \qquad  +\iint\limits_{D\times Y} \varphi_0(x)^2 k(x, y)dxdy + \lambda (\varphi_0, \varphi_0)   \\ 
& \ge c_2 \Big( \int_D\nabla \varphi_0(x) \cdot  \nabla \varphi_0(x) dx + \iint_{D\times Y} \nabla_y \psi_0(x,y) 
\cdot  \nabla_y \psi_0(x,y) dxdy\Big) \\ 
& = c_2  \| (\varphi_0, \psi_0)\|_{\mathfrak H}^2,
\end{align*}
since 
$$
\iint_{D\times Y} \nabla_x \varphi_0(x) \cdot  \nabla_y \psi_0(x,y) dxdy
 = \int_D \nabla_x \varphi_0(x) \cdot  \Big( \int_Y \nabla_y \psi_0(x,y)dy\Big)dx=0.
$$
This shows $\eta$ is a coercive bilinear form on ${\mathfrak H}$. Thus, using again the  Lax-Milgram theorem,  we find a unique $(u_0, u_1) \in {\mathfrak H}$ so that 
\eqref{vi} holds, in other words, the pair $(u_0,u_1)$ is the unique solution of \eqref{eff-1}.

\medskip
{\bf Step 3.} In this step,  we characterize the function $u_1$.  Plugging $\varphi\equiv 0$ in \eqref{vi}, we see that 
\begin{equation} \label{vi0}
\begin{split}
 & \iint\limits_{D\times Y} A(x,y)\Big(\nabla_x u_0(x)+\nabla_y u_1(x,y)\Big)\cdot  \nabla_y \psi(x,y)dxdy  \\
 &\quad + \iint\limits_{D\times Y}  u_0(x) C(x, y) \cdot \nabla_y \psi(x,y)dxdy  =0  
\end{split}
\end{equation}
for all $\psi \in L^2(D;{\mathcal H}_{\#}(Y))$.  Then inserting $\psi(x,y)=\theta_1(x) \theta_2(y)$ 
for $\theta_1 \in C_0^\infty(D)$ and $\theta_2 \in {\mathcal H}_{\#}(Y)$ in \eqref{vi0}, we see
\[
\begin{split}
 \int_D \theta_1(x) \Big(\int_Y  A(x, y) \Big(\nabla_x u_0(x) +\nabla_y u_1(x,y) 
\Big)\cdot  \nabla_y \theta_2(y)dy  
+ \int_Y u_0(x) C(x, y) \cdot  \nabla \theta_2(y)dy \Big)dx =0.
\end{split}
\]
Thus,  the equation 
\begin{equation} \label{vi2}
\begin{split}
 & \int\limits_Y A(x, y) \Big(\nabla_x u_0(x) +\nabla_y u_1(x,y) \Big) \cdot  \nabla_y \psi(y)dy \\
 & \quad + \int\limits_Y u_0(x) C(x, y) \cdot \nabla_y \psi(y)dy  = 0  \quad   {\rm a.e. \ on} \ D
\end{split}
\end{equation}
holds for $\psi \in {\mathcal H}_{\#}(Y)$.    On the other hand, the equations \eqref{effect} and \eqref{effect1} imply 
 that 
\begin{equation} \label{corr2}
\begin{array}{l}
\dis \int_Y A(x, y) \Big(\xi + \nabla_y \phi_{\xi}(x, y)\Big) \cdot  \nabla_y \psi(y) dy =0 \quad {\rm and} \qquad \\
 \ \vspace*{-8pt} \\
\dis \int_Y \Big(A(x, y) \nabla_y w_0(x, y) \cdot \nabla_y \psi(y) +  C(x, y) \cdot \nabla_y \psi (y)\Big)dy =0
\end{array}
\end{equation}
for all $\psi \in{\mathcal H}_{\#}(Y)$ and $\xi \in \rd$ with $\phi_{\xi}(x, y)= \sum_{i=1}^d \xi_i \omega_i(x, y)$.
Now, putting $\xi=\nabla_x u_0(x)$  in the first equation of \eqref{corr2},  multiplying $u_0(x)$ 
in the second equation of \eqref{corr2} and then summing up both sides, we find that 
\begin{align} \label{corr2-1}
 & \int_Y A(x, y)\Big(\nabla_x u_0(x) + \nabla_y \phi(x,y)  \Big) \cdot  \nabla_y \psi(y) dy  \\ \nonumber 
& \quad +\int_Y \Big( u_0(x) A(x, y) \nabla_y w_0(x,y) \cdot \nabla_y \psi(y) + u_0(x)  C(x, y) \cdot \nabla_y  \psi (y)\Big) dy =0  \nonumber
\end{align}
a.e.   $x\in D$ for all  $\psi \in {\mathcal H}_{\#}(Y)$, where 
$\phi(x,y)=\sum_{i=1}^d \frac{\partial u_0}{\partial x_i}(x) \omega_i(x, y)$ for $(x,y) \in D\times Y$.  
So subtracting then both sides of \eqref{vi2} and \eqref{corr2-1},  we have that 
$$
\int_Y A(x,y) \Big( \nabla_y u_1(x,y) -\nabla_y \phi(x,y) -u_0(x) \nabla_y w_0(x, y) \Big) \cdot \nabla_y \psi(y)dy=0, 
$$
a.e.  on  $D$,   for all $\psi \in{\mathcal H}_{\#}(Y)$.
Therefore, we can conclude that 
\begin{equation} \label{corr3}
\begin{split}
u_1(x,y) &=u_0(x) w_0(x, y)+ \phi(x,y) =u_0(x) w_0(x, y)+ \sum_{i=1}^d \frac{\partial u_0}{\partial x_i}(x)\omega_i(x, y). 
\end{split}
\end{equation}

\medskip
{\bf Step 4.} \ We now consider the problem for which the $(u_0, u_1)$ solves.  Using the characterization 
of $u_1$ in the previous step \eqref{corr3}, we see from \eqref{vi} that,  
\begin{align*}
& \! \iint\limits_{D\times Y} A(x,y) \Big(\nabla_x u_0(x) +\nabla_y u_1(x,y)\Big) \cdot \nabla_x \varphi(x)dxdy  \\
& \quad  + \iint\limits_{D\times Y} B(x,y) \cdot  \Big(\nabla_x u_0(x) +\nabla_y u_1(x,y)\Big) \varphi(x) dxdy \\ 
& \quad + \iint\limits_{D\times Y} u_0(x) C(x,y) \cdot  \nabla_x \varphi(x)dxdy   + \iint\limits_{D\times Y} u_0(x) \varphi(x) k(x, y) dxdy  \\
& = \iint\limits_{D\times Y} A(x,y) \Big(\nabla_x u_0(x) + u_0(x) \nabla_y \omega_0(x,y) + \nabla_y \phi(x,y) \Big) \cdot \nabla_x \varphi(x)dxdy   \\
& \quad  + \iint\limits_{D\times Y} B(x,y)\cdot  \Big(\nabla_x u_0(x) + u_0(x) \nabla_y \omega_0(x,y) + \nabla_y \phi(x,y) \Big)  \varphi(x) dxdy \\
& \quad +  \iint\limits_{D\times Y} u_0(x) C(x,y) \cdot  \nabla_x \varphi(x)dxdy 
+ \iint\limits_{D\times Y} u_0(x) \varphi(x) k(x, y) dxdy  \\
& = \iint\limits_{D\times Y} A(x,y) \Big(\nabla_x u_0(x) + \nabla_y \phi(x,y) \Big) \cdot \nabla_x \varphi(x)dxdy  \\
& \quad  + \iint\limits_{D\times Y} B(x,y)\cdot  \Big(\nabla_x u_0(x)  + \nabla_y \phi(x,y) \Big)  \varphi(x) dxdy  \\
& \quad +   \iint\limits_{D\times Y} u_0(x) \Big(A(x,y) \nabla_y \omega_0(x,y) +  C(x,y)\Big) \cdot  \nabla_x \varphi(x)dxdy  \\
& \quad +  \iint\limits_{D\times Y} u_0(x) \Big( k(x, y)+ B(x,y) \cdot \nabla_y \omega_0 (x,y) \Big) \varphi(x) dxdy  \\
& = \int_D A^{\sf eff}(x) \nabla u_0(x) \cdot \nabla \varphi(x) dx 
+ \int_D B^{\sf eff}(x)\cdot \nabla u_0(x) \, \varphi(x) dx  \\ 
& \quad + \int_D u_0(x) \, C^{\sf eff}(x) \cdot \nabla \varphi (x)dx  +  \int_D u_0(x) \varphi(x) k^{\sf eff}(x)dx
\end{align*}
for any $\varphi \in H_0^1(D)$, where 
$A^{\sf eff}(x)=(a_{ij}^{\sf eff}(x))$ is a $d\times d$-matrix-valued function, $B^{\sf eff}(x)=(b_i^{\sf eff}(x))$ and 
$C^{\sf eff}(x)=(c_i^{\sf eff}(x))$ are $d$-dimensional vector-valued function and  $k^{\sf eff}(x)$ is a function, are defined in  \eqref{effcoef}. Thus the equation \eqref{eff-1} induces
\begin{align*}
& \int_D A^{\sf eff} \nabla u_0(x) \cdot \nabla \varphi(x) dx + \int_D B^{\sf eff}\cdot \nabla u_0(x) \, \varphi(x) dx 
+ \int_D u_0(x)  C^{\sf eff}\cdot \nabla \varphi(x) dx  \\
& \quad  +  \int_D u_0(x)\varphi(x) k^{\sf eff}(x)dx +\lambda (u_0, \varphi)  =\langle f, \varphi \rangle,  
\qquad  \varphi \in H_0^1(D).
\end{align*}
This also shows that the limit \eqref{eff-2} holds  by noting that $u_{\lambda}^{\delta}$ converges to 
$u_0$  {\it strongly} in $L^2(D)$ and {\it weakly} in $H^1_0(D)$ (therefore  $\nabla u_{\lambda}^{\delta}$ 
converges to $\nabla u_0$ {\it weakly} in $\big(L^2(D)\big)^d$).
 
\medskip
{\bf Step 5.}  We finally show \eqref{eff-3}.
\begin{align*} \nonumber
& \alpha \iint\limits_{D\times Y} \big|
{\mathfrak T}_{\delta}\big(\nabla u^{\delta}_{\lambda}\big) 
-\nabla_x u_0 - \nabla_y u_1\big|^2 dxdy \\  \nonumber
& \le  \iint\limits_{D\times Y} 
A(x,y)\Big({\mathfrak T}_{\delta} \big(\nabla u^{\delta}_{\lambda}\big)-
\nabla_x u_0 - \nabla_y u_1\Big) \cdot  \Big({\mathfrak T}_{\delta}\big(\nabla u^{\delta}_{\lambda}\big) -
\nabla_x u_0 - \nabla_y u_1\Big)  dxdy \\  \nonumber
& =  \iint\limits_{D\times Y} 
A(x, y){\mathfrak T}_{\delta}\big(\nabla u^{\delta}_{\lambda}\big) \cdot  {\mathfrak T}_{\delta}\big(\nabla u^{\delta}_{\lambda}\big)dxdy  - \iint\limits_{D\times Y} A(x, y) {\mathfrak T}_{\delta}  \big(\nabla u^{\delta}_{\lambda}\big)  \cdot
\big(\nabla_x u_0+\nabla_y u_1\big)dxdy \\  \nonumber
& \qquad - \iint\limits_{D\times Y} A(x, y) \big(\nabla_x u_0+\nabla_y u_1\big)  \cdot 
{\mathfrak T}_{\delta}\big(\nabla u^{\delta}_{\lambda}\big)dxdy  \\
& \qquad + \iint\limits_{D\times Y} 
A(x, y) \big(\nabla_x u_0+\nabla_y u_1\big)  \cdot \big(\nabla_x u_0+\nabla_y u_1\big)dxdy \\ \nonumber
 &=  {\sf (V)}_\delta -\iint\limits_{D\times Y} B(x, y)  \cdot {\mathfrak T}_{\delta}\big(\nabla u^{\delta}_{\lambda}\big) 
{\mathfrak T}_{\delta}(u^{\delta}_{\lambda}) dxdy  - \iint\limits_{D\times Y}  {\mathfrak T}_{\delta} (u_\lambda^{\delta}) 
 C(x, y) \cdot  {\mathfrak T}_{\delta}(\nabla u_\lambda^{\delta})dxdy \\ 
 & \qquad - \iint\limits_{D\times Y} \big({\mathfrak T}_{\delta}(u_\lambda^{\delta})\big)^2 k(x,y) dxdy  
- \iint\limits_{D\times Y} A(x, y) {\mathfrak T}_{\delta}\big(\nabla u^{\delta}_{\lambda}\big) \cdot
\big(\nabla_x u_0+\nabla_y u_1\big)dxdy  \\ 
& \qquad  - \iint\limits_{D\times Y} A(x, y) \big(\nabla_x u_0+\nabla_y u_1\big) \cdot
{\mathfrak T}_{\delta}\big(\nabla u^{\delta}_{\lambda}\big)dxdy   \\
&\qquad + \iint\limits_{D\times Y} A(x, y) \big(\nabla_x u_0+\nabla_y u_1\big)  \cdot \big(\nabla_x u_0+\nabla_y u_1\big)dxdy,  \nonumber
\end{align*}
where
\[ 
\begin{split}
{\sf (V)}_\delta & = \iint\limits_{D\times Y} 
 A(x, y){\mathfrak T}_{\delta}\big(\nabla u^{\delta}_{\lambda}\big)  \cdot
{\mathfrak T}_{\delta}\big(\nabla u^{\delta}_{\lambda} \big)dxdy + \iint\limits_{D\times Y} 
B(x, y) \cdot {\mathfrak T}_{\delta}\big(\nabla u^{\delta}_{\lambda}\big) 
 {\mathfrak T}_{\delta}(u_{\delta})  dxdy \\
&\quad + \iint\limits_{D\times Y}  {\mathfrak T}_{\delta} (u_\lambda^{\delta}) 
 C(x, y)  \cdot {\mathfrak T}_{\delta}(\nabla u_\lambda^{\delta})dxdy + 
 \iint\limits_{D\times Y} \big({\mathfrak T}_{\delta}(u_\lambda^{\delta})\big)^2 k(x,y) dxdy.
 \end{split}
 \]
We estimate each term on the right-hand side.  Since $u^{\delta}_{\lambda}$ converges to 
$u_0$ {\it strongly} in $L^2(D)$, ${\mathfrak T}_{\delta}(u^{\delta}_{\lambda})$ 
(resp. $b_i {\mathfrak T}_{\delta}(u^{\delta}_{\lambda})$,  $c_i {\mathfrak T}_{\delta}(u^{\delta}_{\lambda})$ 
and  $k{\mathfrak T}_{\delta}(u^{\delta}_{\lambda})$) converges to $u_0$ 
(resp. $b_i u_0$,   $c_i u_0$ and $k u_0$) {\it strongly} in $L^2(D\times Y)$. Thus, noting that 
${\mathfrak T}_{\delta}(\nabla u^{\delta}_{\lambda})$ converges to $\nabla_x u_0+\nabla_y u_1$  
{\it weakly} in $\big(L^2(D\times Y)\big)^d$,  we see that 
$$
\lim_{\delta \to 0} 
\iint\limits_{D\times Y} B(x,y)  \cdot {\mathfrak T}_{\delta}\big(\nabla u^{\delta}_{\lambda}\big) 
 {\mathfrak T}_{\delta}(u^{\delta}_{\lambda})dxdy =
 \iint\limits_{D\times Y} B(x, y)  \cdot \big(\nabla_x u_0+\nabla_y u_1\big) u_0 dxdy, 
$$
\[
\begin{split}
\lim_{\delta \to 0} \iint\limits_{D\times Y} 
{\mathfrak T}_{\delta} (u_\lambda^{\delta}) 
 C(x, y)  \cdot {\mathfrak T}_{\delta}(\nabla u_\lambda^{\delta})dxdy &= 
 \iint\limits_{D\times Y} u_0 \, C(x, y)  \cdot \big(\nabla_x u_0 +\nabla_y u_1\big)dxdy  \\
 & = \iint\limits_{D\times Y} u_0 \, C(x, y)   \cdot \nabla_x u_0 dxdy, 
 \end{split}
\]
$$
\lim_{\delta \to 0} \iint\limits_{D\times Y} \big({\mathfrak T}_{\delta}(u_\lambda^{\delta})\big)^2 k(x, y) dxdy   
= \iint\limits_{D\times Y} u_0(x)^2 k(x, y)dxdy,
$$
\[
\begin{split}
& \lim_{\delta \to 0} 
\iint\limits_{D\times Y} A(x, y) {\mathfrak T}_{\delta}\big(\nabla u^{\delta}_{\lambda}\big) \cdot
\big(\nabla_x u_0+\nabla_y u_1\big)dxdy \\ 
& \qquad  =
\iint\limits_{D\times Y} A(x, y) \big(\nabla_x u_0+\nabla_y u_1\big) \cdot 
\big(\nabla_x u_0+\nabla_y u_1\big)dxdy
 \end{split}
\]
and 
\[
\begin{split}
& \lim_{\delta \to 0} 
\iint\limits_{D\times Y} A(x, y) \big(\nabla_x u_0+\nabla_y u_1\big) \cdot 
{\mathfrak T}_{\delta}\big(\nabla u^{\delta}_{\lambda}\big) dxdy  \\
& \qquad  =
\iint\limits_{D\times Y} A(x, y) \big(\nabla_x u_0+\nabla_y u_1\big) \cdot 
\big(\nabla_x u_0+\nabla_y u_1\big)dxdy.
 \end{split}
\]
Finally, the first term is estimated as follows:
\begin{align*}
\lim_{\delta \to 0} {\sf (V)}_\delta
& = \lim_{\delta \to 0}  \Big(\iint\limits_{D\times Y} 
A(x, y){\mathfrak T}_{\delta}\big(\nabla u^{\delta}_{\lambda} \big)  \cdot {\mathfrak T}_{\delta}
\big(\nabla u^{\delta}_{\lambda} \big)dxdy  + \iint\limits_{D\times Y} 
B(x, y)  \cdot {\mathfrak T}_{\delta}\big(\nabla u^{\delta}_{\lambda}\big) 
{\mathfrak T}_{\delta}(u^{\delta}_{\lambda})dxdy \\
 & \qquad +\iint\limits_{D\times Y} {\mathfrak T}_{\delta}(u^{\delta}_{\lambda}) \, 
 C(x, y)  \cdot {\mathfrak T}_{\delta}(\nabla u^{\delta}_{\lambda})dxdy  + \iint\limits_{D\times Y} \big({\mathfrak T}_{\delta}(u_\lambda^{\delta})\big)^2 k(x, y) dxdy \Big) \\
& = \lim_{\delta \to 0}  \Big(\iint\limits_{D\times Y} 
{\mathfrak T}_{\delta}\big( A^{\delta} \nabla u^{\delta}_{\lambda}  \cdot \nabla u^{\delta}_{\lambda}\big)dxdy  
 + \iint\limits_{D\times Y} {\mathfrak T}_{\delta}\big( B^{\delta} \cdot \nabla u^{\delta}_{\lambda} \,  
 u_{\delta}\big)dxdy  \\ 
 & \qquad + \iint\limits_{D\times Y} {\mathfrak T}_{\delta}\big(u_\lambda^{\delta} C^\delta  \cdot \nabla 
 u_\lambda^{\delta}\big) dxdy  
 + \iint\limits_{D\times Y} {\mathfrak T}_{\delta}\big( (u_\lambda^{\delta})^2 k^\delta \big)  dxdy 
 \Big) \\
& =\lim_{\delta \to 0}  \Big(\int_D A^{\delta}(x)\nabla u^{\delta}_{\lambda}(x) \cdot \nabla u^{\delta}_{\lambda}(x)dx +
\int_D B^{\delta}(x) \cdot \nabla u^{\delta}_{\lambda}(x) \, u^{\delta}_{\lambda}(x)dx  \\
& \qquad + \int_D u_\lambda^{\delta}(x) C^\delta(x) \cdot  \nabla  u_\lambda^{\delta}(x) dx  
 + \int_D  u_\lambda^{\delta}(x)^2 k^\delta(x) dx \Big) \\
& = \lim_{\delta \to 0}  \form^{\delta}(u^{\delta}_{\lambda}, u^{\delta}_{\lambda}) \\
& = \form^0(u_0,u_0) \\
&  \  = \int_D A^{\sf eff}(x)\nabla u_0(x) \cdot \nabla u_0(x) dx +\int_D B^{\sf eff}(x) \cdot \nabla u_0(x) \, u_0(x)dx \\
& \qquad  +\int_D u_0(x)\, C^{\sf eff}(x) \cdot \nabla u_0(x) dx  + \int_D u_0(x)^2 k^{\sf eff}(x)dx  \\
& = \iint\limits_{D\times Y} A(x, y) \big(\nabla_x u_0+\nabla_y u_1\big) \cdot 
\big(\nabla_x u_0+\nabla_y u_1\big)dxdy  \\
& \qquad +  \iint\limits_{D\times Y} B(x, y) \cdot \big(\nabla_x u_0+\nabla_y u_1\big) u_0 dxdy \\
  & \qquad  + \iint\limits_{D\times Y} u_0(x) C(x, y)  \cdot \nabla_x u_0(x)dxdy  + \iint\limits_{D\times Y} 
 u_0(x)^2 k(x,y)dxdy.
\end{align*}
Summarizing to the above calculations, we find that 
$$
\lim_{\delta \to 0} \iint\limits_{D\times Y} \big|
{\mathfrak T}_{\delta}\big(\nabla u^{\delta}_{\lambda}\big) 
-\nabla_x u_0 - \nabla_y u_1\big|^2 dxdy  =0,
$$
as desired.
 \end{proof}

For $u,v \in H^1_0(D)$, 
\begin{align*}
\form^0(u,v) & :=\int_D A^{\sf eff}(x) \nabla u(x) \cdot \nabla v(x)dx + \int_D B^{\sf eff}(x) \cdot \nabla u(x) \, v(x)dx \\ 
& \qquad + \int_D u(x) \, C^{\sf eff}(x) \cdot \nabla v(x)dx 
  + \int_D u(x)v(x) k^{\sf eff}(x)dx 
\end{align*}
defines a bilinear form on $L^2(D)$. We next show that the pair $(\form^0, H_0^1(D))$ becomes 
a lower bounded closed form on $L^2(D)$ for some constant $\beta_0$.

\begin{prop} \label{prop:semi}
 In addition to the conditions in Theorem \ref{thm1},  we further assume that the limit of 
$\tilde{C}^\delta$ satisfies that $C\in (L^\infty(D; L^{p_0}(Y)))^d$. Then $(\form^0, H_0^1(D))$ is 
a regular lower bounded closed  form on $L^2(D)$.
\end{prop}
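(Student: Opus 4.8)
\emph{Proof proposal.} The plan is to verify that the effective quadruple $(A^{\sf eff},B^{\sf eff},C^{\sf eff},k^{\sf eff})$ produced by Theorem~\ref{thm1} itself satisfies the structural hypotheses \textbf{(A1)}--\textbf{(A2)} on the bounded open set $D$, and then to apply Theorem~\ref{prop:form0} to it. Since $\form^0$ is exactly the form \eqref{form0} built from this quadruple, that theorem gives at once that $(\form^0,H^1_0(D))$ is a regular lower bounded closed form on $L^2(D)$. (If one wishes \textbf{(A2)} to hold literally on $\rd$ one extends the effective coefficients by $0$ outside $D$; this is harmless since $D$ is bounded.)

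First I would check \textbf{(A1)} for $A^{\sf eff}$. Boundedness of the entries follows from the explicit formula for $a_{ij}^{\sf eff}$, the bound $|a_{i\ell}|\le\beta$, the Cauchy--Schwarz inequality in $y$, and the fact from Proposition~\ref{prop:corr} that $\big\||\nabla_y\omega_j(x,\cdot)|\big\|_{L^2(Y)}\in L^\infty(D)$. For coercivity, fix $\xi\in\rd$ and set $w_\xi(x,\cdot):=\sum_{i=1}^d\xi_i\,\omega_i(x,\cdot)\in H_{\#}(Y)$, which is admissible for a.e.\ $x$ since $\omega_i\in L^2(D;H_{\#}(Y))$; multiplying \eqref{effect} by $\xi_i$, summing over $i$, and testing with $\varphi=w_\xi(x,\cdot)$ yields $\int_Y A(x,y)\bigl(\xi+\nabla_y w_\xi(x,y)\bigr)\cdot\nabla_y w_\xi(x,y)\,dy=0$, whence
\[
A^{\sf eff}(x)\,\xi\cdot\xi=\int_Y A(x,y)\bigl(\xi+\nabla_y w_\xi(x,y)\bigr)\cdot\bigl(\xi+\nabla_y w_\xi(x,y)\bigr)\,dy\ \ge\ \alpha\int_Y\bigl|\xi+\nabla_y w_\xi(x,y)\bigr|^2dy\ \ge\ \alpha|\xi|^2,
\]
the last step because $\int_Y\nabla_y w_\xi(x,y)\,dy=0$ by $Y$-periodicity. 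Thus \textbf{(A1)} holds for $A^{\sf eff}$ with the same lower constant $\alpha$ and some upper constant.

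Next I would check \textbf{(A2)}, i.e.\ $B^{\sf eff},C^{\sf eff}\in(L^{p_0}(D))^d$ and $k^{\sf eff}\in L^{p_0/2}(D)$. Recall from the proof of Theorem~\ref{thm1} that the limit data satisfy $B,C\in(L^{p_0}(D\times Y))^d$ and $k\in L^{p_0/2}(D\times Y)$ (this is property \textbf{(A2)$''$} there). Since $|Y|=1$ and $2<p_0$, Hölder's inequality gives $\|b_i(x,\cdot)\|_{L^2(Y)}\le\|b_i(x,\cdot)\|_{L^{p_0}(Y)}$, a function of $x$ in $L^{p_0}(D)$; combining this with $\big\||\nabla_y\omega_i(x,\cdot)|\big\|_{L^2(Y)}\in L^\infty(D)$ and Cauchy--Schwarz in $y$, each term of $b_i^{\sf eff}$ lies in $L^{p_0}(D)$, so $B^{\sf eff}\in(L^{p_0}(D))^d$. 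For $c_i^{\sf eff}$ the term $\int_Y c_i(x,y)\,dy$ is in $L^{p_0}(D)$ as above, while $\int_Y\sum_\ell a_{i\ell}(x,y)\,\partial_{y_\ell}\omega_0(x,y)\,dy$ is dominated by a constant times $\big\||\nabla_y\omega_0(x,\cdot)|\big\|_{L^2(Y)}$, which lies in $L^\infty(D)\subset L^{p_0}(D)$ precisely by the additional hypothesis $C\in(L^\infty(D;L^{p_0}(Y)))^d$ and the last assertion of Proposition~\ref{prop:corr}; hence $C^{\sf eff}\in(L^{p_0}(D))^d$. Finally $\int_Y k(x,y)\,dy\in L^{p_0/2}(D)$, and each $\int_Y b_i(x,y)\,\partial_{y_i}\omega_0(x,y)\,dy$ is controlled by $\|b_i(x,\cdot)\|_{L^2(Y)}\,\big\||\nabla_y\omega_0(x,\cdot)|\big\|_{L^2(Y)}\in L^{p_0}(D)\subset L^{p_0/2}(D)$ (since $D$ is bounded), so $k^{\sf eff}\in L^{p_0/2}(D)$.

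Once \textbf{(A1)}--\textbf{(A2)} are in hand for $(A^{\sf eff},B^{\sf eff},C^{\sf eff},k^{\sf eff})$, Theorem~\ref{prop:form0} applied to this quadruple is exactly the claim. I expect the main obstacle to be the previous step, and within it the correct placement of the corrector contributions in Lebesgue classes: this is where the extra assumption $C\in(L^\infty(D;L^{p_0}(Y)))^d$ is essential, for without it Proposition~\ref{prop:corr} only yields $\omega_0\in L^2(D;H_{\#}(Y))$, and then the term $\int_Y a_{i\ell}\,\partial_{y_\ell}\omega_0\,dy$ entering $c_i^{\sf eff}$ (as well as the $\omega_0$-term in $k^{\sf eff}$) need not be $p_0$-integrable, respectively $p_0/2$-integrable, over $D$. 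A smaller point is to confirm that $w_\xi(x,\cdot)$ is an admissible test function in $H_{\#}(Y)$ for a.e.\ $x$, which is guaranteed by $\omega_i\in L^2(D;H_{\#}(Y))$.
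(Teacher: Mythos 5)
Your proposal is correct and follows essentially the same route as the paper: the ellipticity of $A^{\sf eff}$ is obtained from the identical corrector identity $\int_Y A(x,y)\bigl(\xi+\nabla_y\phi_\xi\bigr)\cdot\nabla_y\phi_\xi\,dy=0$, and the lower-order terms are controlled by the same combination of $B,C\in(L^{p_0}(D\times Y))^d$, $k\in L^{p_0/2}(D\times Y)$ and the $L^\infty(D)$ bounds on $\bigl\|\,|\nabla_y\omega_i(x,\cdot)|\,\bigr\|_{L^2(Y)}$ from Proposition \ref{prop:corr}, with the extra hypothesis on $C$ entering exactly where you place it (through $\omega_0$ in $c_i^{\sf eff}$ and $k^{\sf eff}$). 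The only difference is packaging: you verify that the effective coefficients themselves satisfy \textbf{(A1)}--\textbf{(A2)} on $D$ and then invoke Theorem \ref{prop:form0}, whereas the paper re-runs the Lemma \ref{lem:est1}-type form estimates directly on $\form^0$; both are sound.
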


\begin{proof}
Observe that $\omega_i \in L^\infty(D;{\mathcal  H}_{\#}(Y))$ for $i=0,1,\ldots, d$, it follows from 
\eqref{corr2} that 
$$
\int_Y A(x,y) \big( \xi + \nabla_y \phi_{\xi}(x, y) \big) \cdot  \nabla_y \phi_{\xi}(x,y) dy=0, \quad {\rm a.e.}  \;\;  x\in D, 
$$
where $\phi_{\xi}(x, y)=\sum_{j=1}^d \xi_j \omega_j(x, y)$ for $\xi \in \rd$.  So from {\bf (A2)$'' $} and Proposition 
\ref{prop:corr} we find that 
\begin{align*}
\sum_{i,j=1}^d a^{\sf eff}_{ij}(x) \xi_i \xi_j & = \int_Y \Big(\sum_{ij=1}^d a_{ij}(x,y)\xi_i \xi_j+ 
\sum_{i,j=1}^d \Big( \sum_{\ell=1}^d a_{i\ell}(x, y) \frac{\partial \omega_j}{\partial y_\ell}(x, y) \Big)\xi_i  \xi_j \Big)dy   \\
& = \int_Y \Big(A(x, y) \xi \cdot \xi  + A(x, y) \xi \cdot \nabla_y \phi_{\xi}(x, y) \Big)dy  \\
&  = \int_Y A(x, y)\Big(\xi + \nabla_y \phi_{\xi}(x, y)\Big) \cdot \Big(\xi + \nabla_y \phi_{\xi}(x, y)\Big) dy \\
& \ge \alpha \int_Y \Big|\xi + \nabla_y \phi_{\xi}(x, y)\Big|^2dy \\ 
& = \alpha \Big( |\xi|^2 +2 \int_Y \xi \cdot \nabla_y \phi_{\xi}(x, y) dy
  + \int_Y \Big|\nabla_y \phi_{\xi}(x, y)\Big|^2dy \Big)\\ 
& = \alpha \Big( |\xi|^2 + \int_Y \Big|\nabla_y \phi_{\xi}(x, y)\Big|^2dy \Big) \\ 
& \ge \alpha |\xi|^2
\end{align*}
and 
\begin{align*}
\sum_{ij=1}^d a^{\sf eff}_{ij}(x) \xi_i \xi_j & \le \beta \int_Y  \Big(|\xi|^2 + \big| \nabla_y \phi_{\xi}(x, y)\big|^2\Big)dy\\
& = \beta  \int_Y  \Big(|\xi|^2 +  \sum_{i=1}^d \Big(\sum_{j=1}^d\xi_j \frac{\partial \omega_j}{\partial y_i}(x, y)\Big)^2 
 \Big)dy \\
 & \le  \beta \int_Y \Big(|\xi|^2 +  |\xi|^2 \sum_{i=1}^d \big|\nabla_y \omega_i(x, y)\big|^2 \Big) dy \\
 & =  \beta \Big(1 + \sum_{i=1}^d \big\| | \nabla_y \omega_i(x, \cdot)| \big\|_{L^2(Y)} \Big) |\xi|^2 \\
 & \le \beta (1+M) |\xi|^2
\end{align*}
for some constant $M>0$. 

On the other hand, since $B$ and $C$ belong to  $(L^{p_0}(D\times Y))^d$,  $k\in L^{p_0/2}(D\times Y)$ 
and $\omega_i \in L^\infty(D; {\mathcal H}_{\#}(Y))$,  the following inequality holds  by similar
estimates in the proof of  Lemma \ref{lem:est1}:  
 for any $\vareps>0$, there exists $\beta'=\beta'(\vareps)>0$ (depending  only on $\vareps>0$, the norms of 
 $B$ and $C$ in $(L^{p_0}(D\times Y))^d$,  $k$ in $L^{p_0/2}(D\times Y)$ and $\omega_i$ in 
$L^\infty(D; {\mathcal H}_{\#}(Y))$) such that
\[ 
\begin{split}
& \dis \max\Big\{ \Big| \int_D B^{\sf eff}(x) \cdot \nabla u(x) u(x)dx \Big|, \ 
 \Big| \int_D u(x) C^{\sf eff}(x)  \cdot \nabla u(x)dx \Big|,   \\ 
 &\qquad  \int_D u(x)^2 k^{\sf eff} (x) dx \Big\}   \le \vareps {\mathbb D}(u,u) + \beta' \|u\|_{L^2(D)}^2, \qquad  {\rm for} \ u \in H_0^1(D).
\end{split}
\]
Then taking $\beta'=\beta'(\vareps)$ for some $0< \vareps<\alpha/3$, we find that
\begin{align*}
\form^0_{\beta_0}  (u,u) & =\int_D A^{\sf eff}\nabla u(x)  \cdot \nabla u(x)dx  + \int_D B^{\sf eff} \cdot \nabla u(x) \, u(x)dx \\ 
& \quad + \int_D u(x)\, C^{\sf eff}  \cdot \nabla u(x) dx + \int_D u(x)^2 k^{\sf eff}(x)dx  + \beta' (u,u) \\ 
& \ge (\alpha- 3\vareps) \ {\mathbb D}(u,u)   \ge 0
\end{align*}
hold for any $u \in H^1_0(D)$.  Therefore for $\beta>\beta'$,  it follows that 
$\form^0_{\beta}(u,u)$ is comparable to the Dirichlet integral ${\mathbb D}(u,u)$ for $u \in H^1_0(D)$
 which means that $\form^0$ satisfies {\bf (B1)} and {\bf (B3)} with index $\beta'$ similar to Proposition \ref{DF}. 
{\bf (B2)} is proved similarly, and then we can conclude that the pair $(\form^0, H^1_0(D))$ is a regular 
lower bounded closed form on $L^2(D)$ with index $\beta'$. 
\end{proof}

We finish this paper by discussing the consequences of Theorem \ref{thm1} 
in terms of convergence of diffusion processes as follows.

\begin{cor} 
Let $\lambda>\beta_0 \vee \beta'$ and $f_{\delta} \in L^2(D)$,  where 
$\beta_0$ is the constant in Theorem \ref{delta-DF} and $\beta'$ is the constant appeared in the proof of Proposition \ref{prop:semi}. 
Then  for each $\delta>0$,  the solution $u_{\lambda}^\delta$  is the $L^2$-resolvent of $f$ with respect  to the 
lower bounded closed form $(\form^{\delta}, H^1_0(D))$ {\rm (i.e.,  $u_{\lambda}^{\delta}=G^{\delta}_{\lambda} f_{\delta}$)}.  
Moreover,  if $u_0$ is the limit of $u_{\lambda}^\delta$ in Theorem \ref{thm1}
and $f_{\delta}$ converges to $f$  strongly  in $L^{2}(D)$,  then 
$u_0$ is also the $L^2$-resolvent of $f$ with respect to the closed form $(\form^0, H^1_0(D))$ 
{\rm (i.e.,  $u_0=G^0_{\lambda} f$)} so  that  $G_{\lambda}^{\delta} f_{\delta}$ converges to $G_{\lambda}^0 f$ strongly 
in $L^2(D)$, or equivalently the $L^2$-semigroups $T^{\delta}_tf_{\delta}$ converges to $T^0_t f$ strongly in $L^2(D)$ as $\delta$ tends to $0$,   for any $t>0$.
\end{cor}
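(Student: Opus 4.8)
The plan is to obtain the statement in four short moves, each simply combining results already proved. First I would identify $u^{\delta}_{\lambda}$ with $G^{\delta}_{\lambda}f_{\delta}$: since $f_{\delta}\in L^{2}(D)\subset H^{-1}(D)$ and $\lambda>\beta_{0}$ is above the index of $(\form^{\delta},H^{1}_{0}(D))$ (Theorem \ref{delta-DF}), Theorem \ref{thm:exist} yields the unique $u^{\delta}_{\lambda}\in H^{1}_{0}(D)$ with $\form^{\delta}_{\lambda}(u^{\delta}_{\lambda},\varphi)=\langle f_{\delta},\varphi\rangle_{H^{-1},H^{1}_{0}}=(f_{\delta},\varphi)_{L^{2}(D)}$ for all $\varphi\in H^{1}_{0}(D)$, and this is exactly the equation characterizing the $L^{2}$-resolvent recalled in Subsection \ref{sec:notation}; hence $u^{\delta}_{\lambda}=G^{\delta}_{\lambda}f_{\delta}$. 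Next I would identify the limit $u_{0}$ with $G^{0}_{\lambda}f$: strong $L^{2}(D)$-convergence $f_{\delta}\to f$ forces strong $H^{-1}(D)$-convergence with the same limit, so Theorem \ref{thm1} applies, and the reduced identity obtained at the end of its Step 4 reads $\form^{0}_{\lambda}(u_{0},\varphi)=\langle f,\varphi\rangle_{H^{-1},H^{1}_{0}}=(f,\varphi)_{L^{2}(D)}$ for all $\varphi\in H^{1}_{0}(D)$; since $(\form^{0},H^{1}_{0}(D))$ is a lower bounded closed form of index $\beta'$ (Proposition \ref{prop:semi}) and $\lambda>\beta'$, coercivity of $\form^{0}_{\lambda}$ (cf.\ the estimate in the proof of Proposition \ref{prop:semi}) gives uniqueness, so $u_{0}=G^{0}_{\lambda}f$.

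The third move is then immediate: Theorem \ref{thm1}(ii) says $u^{\delta}_{\lambda}\to u_{0}$ strongly in $L^{2}(D)$, which by the two identifications is precisely $G^{\delta}_{\lambda}f_{\delta}\to G^{0}_{\lambda}f$ in $L^{2}(D)$. Taking in particular a constant sequence $f_{\delta}\equiv g$ for arbitrary $g\in L^{2}(D)$ (an admissible choice in Theorem \ref{thm1}) upgrades this to strong resolvent convergence $G^{\delta}_{\lambda}\to G^{0}_{\lambda}$ on all of $L^{2}(D)$ at the single point $\lambda>\beta_{0}\vee\beta'$.

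Finally I would pass from resolvents to semigroups. All the semigroups obey the uniform exponential bound $\|T^{\delta}_{t}\|\le e^{\beta_{0}t}$ and $\|T^{0}_{t}\|\le e^{\beta't}$, hence are of type $(1,\omega)$ with $\omega:=\beta_{0}\vee\beta'$, and $\{T^{0}_{t}\}$ is strongly continuous because $\form^{0}$ is a closed form on $L^{2}(D)$ with dense domain. With this uniform bound and the strong resolvent convergence at $\lambda>\omega$ just established, the Trotter--Kato approximation theorem gives $T^{\delta}_{t}g\to T^{0}_{t}g$ in $L^{2}(D)$, uniformly for $t$ in compact subsets of $[0,\infty)$, for every $g\in L^{2}(D)$; the varying data are then absorbed by
\[
\|T^{\delta}_{t}f_{\delta}-T^{0}_{t}f\|_{L^{2}(D)}\;\le\;e^{\omega t}\,\|f_{\delta}-f\|_{L^{2}(D)}+\|T^{\delta}_{t}f-T^{0}_{t}f\|_{L^{2}(D)}.
\]
The reverse implication (semigroup convergence $\Rightarrow$ resolvent convergence) follows from the Laplace representation $G^{\delta}_{\lambda}f_{\delta}=\int_{0}^{\infty}e^{-\lambda t}T^{\delta}_{t}f_{\delta}\,dt$, the dominating function $t\mapsto e^{-(\lambda-\omega)t}\sup_{\delta>0}\|f_{\delta}\|_{L^{2}(D)}\in L^{1}(0,\infty)$, and dominated convergence, which gives the asserted equivalence.

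The identifications and the resolvent convergence are bookkeeping on top of Theorems \ref{thm:exist}, \ref{thm1} and Proposition \ref{prop:semi}; the one point needing genuine care is the last step, namely verifying that the non-symmetric semi-Dirichlet semigroups $\{T^{\delta}_{t}\}$ meet the hypotheses of Trotter--Kato — strong continuity, a \emph{uniform} exponential bound on a common half-line, and strong resolvent convergence at one point of the common resolvent set — and then controlling the extra error term arising from $f_{\delta}\neq f$.
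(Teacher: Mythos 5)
Your proposal is correct and follows exactly the route the paper intends: the paper states this corollary without a written proof, the identification $u^{\delta}_{\lambda}=G^{\delta}_{\lambda}f_{\delta}$ and $u_{0}=G^{0}_{\lambda}f$ being immediate from the resolvent characterization in Section \ref{sec:notation} together with Theorem \ref{thm:exist}, the limiting equation at the end of Step 4 of Theorem \ref{thm1}, and the coercivity of $\form^{0}_{\lambda}$ from Proposition \ref{prop:semi}. Your explicit verification of the Trotter--Kato hypotheses (uniform bound $e^{(\beta_{0}\vee\beta')t}$ with $M=1$, strong continuity, resolvent convergence at one $\lambda>\beta_{0}\vee\beta'$) and the Laplace-transform argument for the converse supply precisely the standard semigroup-theoretic details the paper leaves implicit, with the varying data $f_{\delta}\neq f$ correctly absorbed by the triangle inequality.
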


A similar property holds for the limit form. If {\bf (A3)} is also satisfied by $C^\delta(x)$ and $k^\delta(x)$ and the limit $C(x,y)$ of the unfolding $\tilde{C}^\delta(x,y)$ belongs to $(L^\infty(D; L^{p_0}(Y)))^d$,  
 we find that  $(\form^\delta, H^1_0(D))$ becomes a lower bounded semi-Dirichlet form on 
$L^2(D)$ for each $\delta>0$ and so is the limit form $(\form^0, H^1_0(D))$.  
As for  the semi-Dirichlet form property  for the limit form (i.e., the Markov property  for the limit semigroups),   since the semigroup $\{T_t^\delta\}_{t>0}$ 
of the resolvent $\{G^{\delta}_{\lambda}\}_{\lambda}$ is sub-Markov for each $\delta>0$, then so is the limit semigroup $\{T^0_t\}_{t>0}$ of 
$\{G_\lambda^0\}_{\lambda}$.  Therefore, the limit form $(\form^0, H^1_0(D))$ is indeed a regular 
lower bounded semi-Dirichlet form on $L^2(D)$.  Then, we can conclude the convergence of the semi-Dirichlet forms in the sense of 
Mosco \footnote{see \cite{H98} for the definition of the 
Mosco convergence of semi-Dirichlet forms (see also \cite{M94}).} and the convergence of the corresponding diffusion processes.

\begin{theorem}  Suppose that the conditions in Theorem \ref{thm1} are satisfied and the limit $C(x,y)$ of the unfolding of $C^\delta$, 
$\tilde{C}^\delta(x,y)={\mathfrak T}_\delta(C^\delta)(x,y)$,  satisfies that  $C\in (L^\infty(D; L^{p_0}(Y)))^d$ (see Proposition \ref{prop:semi}).  
Suppose further that {\bf (A3)} holds for $C^\delta(x)$ and $k^\delta(x)$.  Then $(\form^\delta, H^1_0(D))$  and 
$(\form^0, H^1_0(D))$ become lower bounded semi-Dirichlet forms on $L^2(D)$ and 
the sequence of semi-Dirichlet forms $(\form^\delta, H^1_0(D))$ converges to 
$(\form^0, H^1_0(D))$ on $L^2(D)$ in the sense of Mosco  and the diffusion processes ${\sf M}^{\delta}$ corresponding to $(\form^{\delta}, H^1_0(D))$ converges to 
the diffusion process ${\sf M}^0$ corresponding to $(\form^0, H^1_0(D))$ in the finite-dimensional distribution sense with some initial conditions. 
\end{theorem}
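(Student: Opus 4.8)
The plan is to assemble the claim from results already established, the only genuinely new ingredient being the passage from the strong resolvent convergence of the corollary preceding the statement to Mosco convergence and then to convergence of the finite-dimensional distributions.

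First I would record the semi-Dirichlet property of both families. Since {\bf (A3)} for $C^\delta$ and $k^\delta$ is nothing but {\bf (A3)$'$} once the uniform local integrability of {\bf (A2)$'$} is taken into account, the corollary to Theorem \ref{delta-DF} gives that $(\form^\delta, H^1_0(D))$ is, for each $\delta>0$, a regular lower bounded semi-Dirichlet form on $L^2(D)$ with the uniform index $\beta_0$; hence a diffusion ${\sf M}^\delta$ is properly associated with it (see \cite{O13}). For the limit form, Proposition \ref{prop:semi} already gives that $(\form^0, H^1_0(D))$ is a regular lower bounded closed form — this is where the extra hypothesis $C\in(L^\infty(D;L^{p_0}(Y)))^d$ enters, through Proposition \ref{prop:corr}, to keep $\big\|\,|\nabla_y\omega_0(x,\cdot)|\,\big\|_{L^2(Y)}$ in $L^\infty(D)$. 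To upgrade $\form^0$ to a semi-Dirichlet form I would invoke the strong semigroup convergence $T_t^\delta f\to T_t^0 f$ in $L^2(D)$ from the corollary preceding the theorem: for $0\le f\le 1$ each $T_t^\delta f$ satisfies $0\le T_t^\delta f\le 1$ a.e.\ because ${\sf M}^\delta$ is sub-Markov, and passing to an a.e.-convergent subsequence yields $0\le T_t^0 f\le 1$ a.e.; thus $\{T_t^0\}_{t>0}$ is sub-Markov, equivalently $(\form^0,H^1_0(D))$ satisfies {\bf (B4)}. Therefore ${\sf M}^0$ exists as well.

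Next I would establish the Mosco convergence. The corollary just before the theorem yields $G_\lambda^\delta f\to G_\lambda^0 f$ strongly in $L^2(D)$ for every $f\in L^2(D)$ and every $\lambda>\beta_0\vee\beta'$, equivalently $T_t^\delta f\to T_t^0 f$ strongly in $L^2(D)$ for all $t>0$. Because {\bf (A1)$'$}--{\bf (A2)$'$} make the weak sector constants in {\bf (B2)} and the indices $\beta_0$ uniform in $\delta$ (cf.\ Proposition \ref{DF} and \eqref{comparison}), the non-symmetric Mosco theory of Hino \cite{H98} (see also \cite{M94}) is applicable, and in that framework strong convergence of the semigroups (equivalently of the resolvents) is equivalent to convergence of the forms in the Mosco sense; hence $(\form^\delta,H^1_0(D))\to(\form^0,H^1_0(D))$ in the sense of Mosco. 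I expect this to be the only delicate step — verifying that our hypotheses meet exactly those required by the non-symmetric Mosco characterization (common Hilbert space $L^2(D)$, uniform sector bounds) and quoting it correctly; everything else is bookkeeping.

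Finally I would deduce the convergence of finite-dimensional distributions. Fix an initial law $\mu=g\,dx$ with $0\le g\in L^2(D)$ (the ``some initial conditions''; note $g\in L^1(D)$ automatically since $D$ is bounded), times $0<t_1<\cdots<t_n$, and bounded Borel functions $f_1,\dots,f_n$ on $D$ (set $\equiv 0$ at the cemetery point). Using $T_t^\delta f(x)=\expect^\delta_x[f(X_t)]$ for quasi-every $x$ together with the Markov property of ${\sf M}^\delta$,
\[
\expect^\delta_\mu\Big[\prod_{j=1}^n f_j(X_{t_j})\Big]
=\Big(g,\;T_{t_1}^\delta\big(f_1\,T_{t_2-t_1}^\delta\big(f_2\cdots T_{t_n-t_{n-1}}^\delta f_n\big)\big)\Big)_{L^2(D)}.
\]
Working outward from the innermost term and using repeatedly that multiplication by a bounded Borel function is a bounded operator on $L^2(D)$ preserving strong convergence, and that $T_t^\delta h_\delta\to T_t^0 h$ strongly in $L^2(D)$ whenever $h_\delta\to h$ strongly in $L^2(D)$ — which follows from the strong semigroup convergence and the uniform bound $\|T_t^\delta f\|\le e^{\beta_0 t}\|f\|$ — the nested expression converges strongly in $L^2(D)$; pairing with $g\in L^2(D)$ gives
\[
\expect^\delta_\mu\Big[\prod_{j=1}^n f_j(X_{t_j})\Big]\;\longrightarrow\;\expect^0_\mu\Big[\prod_{j=1}^n f_j(X_{t_j})\Big]\qquad(\delta\to0),
\]
which is exactly the convergence of ${\sf M}^\delta$ to ${\sf M}^0$ in the finite-dimensional distribution sense.
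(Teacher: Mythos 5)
Your argument follows the same route as the paper's own (sketched) proof: each $(\form^\delta,H^1_0(D))$ is semi-Dirichlet by the corollary to Theorem \ref{delta-DF}, the limit form is closed by Proposition \ref{prop:semi} and inherits the sub-Markov property \textbf{(B4)} from the strong convergence $T_t^\delta\to T_t^0$, Mosco convergence is read off from the strong resolvent convergence via Hino's non-symmetric framework, and finite-dimensional convergence follows. The only difference is that you spell out the iterated-semigroup bookkeeping for the finite-dimensional distributions, which the paper leaves implicit; this is a correct filling-in of the same approach, not a new one.
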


\begin{remark} We do not assume any continuity for $A(x)$ here to show the convergence. In contrast, 
one of the authors has assumed that the map $x \mapsto A(x)$ is continuous in \cite{TU22}  
since the $2$-scale convergence is used to show the convergence of the corresponding Dirichlet forms there.
The $2$-scale convergence is equivalent to the weak convergence of unfolding operators 
\cite[Proposition 1.19]{CDG18}. Therefore this may pose difficulties regarding the choice of the test function space for the convergence.
\end{remark}

\subsection*{Statements and Declarations} 
On behalf of all authors,  the corresponding author states that there is no conflict of interest.

\subsection*{Data Availability Statement}
The authors do not analyze or generate any datasets because this work proceeds with a theoretical and mathematical approach. The relevant materials can be obtained from the references below.

\subsection*{Acknowledgments} 
This work was supported by the Thammasat University Research Unit in Gait Analysis and Intelligent Technology (GaitTech). A crucial part of the research was conducted while A.S.  was visiting Kansai University; its hospitality is appreciated.  The authors acknowledge  Y\={u}ki Inagaki for fruitful discussions.
This work was also supported by JSPS KAKENHI Grant Numbers 19K03552,  19H00643, and 22K03340.

\end{document}